\documentclass[11pt]{amsart}

\usepackage{amsthm,amssymb,amstext,amscd,amsfonts,amsbsy,amsxtra,latexsym,amsmath,color,comment,fancybox}
\usepackage{fullpage}
\usepackage[english]{babel}
\usepackage[latin1]{inputenc}
\usepackage{cancel}
\usepackage{mathrsfs}
\usepackage{enumitem}
\allowdisplaybreaks

\oddsidemargin = 0cm \evensidemargin = 0cm \textwidth = 6.5in

\newcommand{\field}[1]{\mathbb{#1}}
\newcommand{\N}{\field{N}}
\newcommand{\Z}{\field{Z}}
\newcommand{\R}{\field{R}}
\newcommand{\C}{\field{C}}

\renewcommand{\H}{\mathbb{H}}
\newcommand{\sgn}{\mathrm{sgn}}
\newcommand{\SL}{\operatorname{SL}}

\newcommand{\im}{\operatorname{Im}}
\newcommand{\re}{\operatorname{Re}}

\newcommand{\bea}{\begin{eqnarray}}
\newcommand{\eea}{\end{eqnarray}}
\newcommand{\be}{\begin {equation}}
\newcommand{\ee}{\end{equation}}

\numberwithin{equation}{section}
\newtheorem{theorem}{Theorem}
\numberwithin{theorem}{section}
\newtheorem{lemma}[theorem]{Lemma}
\newtheorem{proposition}[theorem]{Proposition}
\newtheorem{corollary}[theorem]{Corollary}

\theoremstyle{remark}
\newtheorem*{remark}{Remark}
\newtheorem*{remarks}{Remarks}
\newtheorem*{definition}{Definition}

\renewenvironment{proof}[1][Proof]{\begin{trivlist} \item[\hskip \labelsep {\bfseries #1:}]}{\qed\end{trivlist}}

\begin{document}
\title{A problem of Petersson about weight 0 meromorphic modular forms}
\author{Kathrin Bringmann}
\address{Mathematical Institute\\University of
Cologne\\ Weyertal 86-90 \\ 50931 Cologne \\Germany}
\email{kbringma@math.uni-koeln.de}
\author{Ben Kane}
\address{Department of Mathematics\\ University of Hong Kong\\ Pokfulam, Hong Kong}
\email{bkane@maths.hku.hk}
\date{\today}
\begin{abstract}
In this paper, we provide an explicit construction of weight $0$ meromorphic modular forms.  Following work of Petersson, we build these via Poincar\'e series.  There are two main aspects of our investigation which differ from his approach.  Firstly, the naive definition of the Poincar\'e series diverges and one must analytically continue via Hecke's trick.  Hecke's trick is further complicated in our situation by the fact that the Fourier expansion does not converge everywhere due to singularities in the upper half-plane so it cannot 
solely be used to analytically continue the functions.  To explain the second difference, we recall that Petersson constructed linear combinations from a family of meromorphic functions which are modular if a certain principal parts condition is satisfied.  In contrast to this, we construct linear combinations from a family of non-meromorphic modular forms, known as polar harmonic Maass forms, which are meromorphic whenever the principal parts condition is satisfied.
\end{abstract}
\thanks{The research of the first author was supported by the Alfried Krupp Prize for Young University Teachers of the Krupp foundation and the research leading to these results has received funding from the European Research Council under the European Union's Seventh Framework Programme (FP/2007-2013) / ERC Grant agreement n. 335220 - AQSER.  The research of the second author was supported by grant project numbers 27300314 and 17302515 of the Research Grants Council.}
\subjclass[2010] {11F03, 11F12, 11F25}
\keywords{Meromorphic modular forms, polar harmonic Maass forms, Hecke's trick}
\maketitle

\section{Introduction and statement of results}
A special case of the Riemann--Roch Theorem gives a sufficient and necessary condition for the existence of meromorphic modular forms with prescribed principal parts.  Although this implies the existence of meromorphic modular forms with certain prescribed principal parts, it unfortunately fails to explicitly 
produce  them.  Using Poincar\'e series, Petersson achieved the goal of an explicit construction for negative weight forms in \cite{Pe1,Pe2}, but did not cover the case of weight $0$ considered in this paper.  This paper deals with difficulties caused by divergence of the naive Poincar\'e series and also views the problem from a different perspective than Petersson's.  In particular, the question is placed into the context of a larger space of non-meromorphic modular forms, allowing the usage of modern techniques to avoid some of the difficulties of Petersson's method.

To give a flavor of the differences between these methods, we delve a little deeper into the history of Petersson's related work in \cite{Pe1}.  The relevant meromorphic modular forms are constructed via a family of two-variable meromorphic Poincar\'e series if the corresponding group only has one cusp and the weight is negative.  The Poincar\'e series have positive weight in one variable by construction, but the other variable can be used as a gateway between the space of positive-weight forms and their dual negative-weight counterparts.  
In this way, the existence of meromorphic modular forms implied by the special case of the Riemann--Roch Theorem considered in \cite{Pe1} may be viewed as a sufficient and necessary condition for certain linear combinations of Poincar\'e series to satisfy (negative weight) modularity in the second variable.  Following this logic, Satz 3 of \cite{Pe1} provides an explicit version of the existence implied by Riemann--Roch.

Petersson then pointed out two remaining tasks: the first  one pertains to including general subgroups.  He later achieved this in Satz 16 of \cite{Pe2}, with an explicit representation of the forms given in (56) of \cite{Pe2}.  He then asked whether there is a generalization to weight $0$.  In this paper, we settle Petersson's second question by viewing it in a larger space of the so-called polar harmonic Maass forms,  generalizations of Bruinier-Funke harmonic Maass forms \cite{BF}.  These are modular objects which are no longer meromorphic but which are instead annihilated by the  hyperbolic Laplacian.  In this larger space, principal parts may essentially be chosen arbitrarily and the principal part condition of Riemann--Roch translates into a condition which determines whether a polar harmonic Maass form with a given principal part is meromorphic.  The subspace of harmonic Maass forms have appeared in a number of recent applications.  For example, Zwegers \cite{Zwegers} recognized the mock theta functions, introduced by Ramanujan in his last letter to Hardy, as ``holomorphic parts'' of half-integral weight harmonic Maass forms.   Generating functions for central values and derivatives of quadratic twists of weight 2 modular $L$-functions were later proven to be weight $1/2$ harmonic Maass forms by Bruinier and Ono \cite{BrO}.  Duke, Imamo$\overline{\text{g}}$lu, and T\'oth \cite{DITRamanujan} used weight $2$ harmonic Maass forms to evaluate the inner products between modular functions.

In this paper, we instead investigate Maass forms which are also allowed to grow at points in the upper 
half-plane.
  Such forms are of growing interest because they yield lifts of meromorphic modular forms, which occur throughout various applications. Just to mention a few examples, Duke and Jenkins \cite{DJ} studied traces of meromorphic modular forms and such functions are also of importance for constructing canonical lifts \cite{AGOR, Gu}.

We now return to the main question addressed in this paper, the classification via principal parts of those polar harmonic Maass forms which are meromorphic.  To state the results, we require some notation. Denote by $\mathcal{S}_N$ a set of inequivalent cusps of $\Gamma_0(N)$ and for each $\varrho\in \mathcal{S}_N$,  $\ell_{\varrho}$ is the cusp width of $\varrho$.  For $\mathfrak{z}\in\H$, we furthermore denote $\omega_{\mathfrak{z}}=\omega_{\mathfrak{z},N}:=
\# \Gamma_{\mathfrak{z}}
$, where $\Gamma_{\mathfrak{z},N}$ is the stabilizer group of $\mathfrak{z}$ 
in $\operatorname{PSL}_2(\Z)\cap\Gamma_0(N)$.  Throughout, we write $z=x+iy$, $\mathfrak{z}=\mathfrak{z}_1+i\mathfrak{z}_2$, and $\tau_d=u_d+iv_d$.  There is a well-known family of polar harmonic Maass forms $\mathcal{P}_{2-2k,n, N}^{\varrho}(z)$ with $n\in-\N$, each of which has principal part $e^{2\pi i n z/\ell_{\varrho}}$ as $z$ approaches $\varrho\in\mathcal{S}_N$ and no other singularities in $\H\cup \mathcal{S}_N$. 
Thus for an explicit construction of weight $0$ forms, it only remains to build forms with singularities in the upper half-plane.  In particular, 
 the main step in this paper
 is to use Hecke's trick to explicitly define a family of polar harmonic Maass forms $\mathcal{Y}_{0,n,N}(\mathfrak{z},z)$ in \eqref{eqn:Ydef} with principal parts $X_{\mathfrak{z}}^{n}(z)$ at $z=\mathfrak{z}$ and no other singularities in $\Gamma_0(N)\backslash\H\cup \mathcal{S}_N$, where
$$
X_{\mathfrak{z}}(z):=\frac{z-\mathfrak{z}}{z-\overline{\mathfrak{z}}}.
$$

For this task, we take inspiration from Petersson in two different directions; firstly, an argument in \cite{Pe} is augmented 
to extend Hecke's trick to the case when the Poincar\'e series have poles, yielding $\mathcal{Y}_{0,-1,N}$, and secondly differential operators constructed in \cite{Pe2} 
are applied to $\mathcal{Y}_{0,-1,N}$ to construct the family $\mathcal{Y}_{0,n,N}$.  The construction of these Poincar\'e series is of independent interest; together with the functions $\mathcal{P}_{0,n,N}^{\varrho}$, they form a basis of the space of polar harmonic Maass forms with explicit principal parts and we plan to study them further in future research. Hence the principal parts of any linear combination of $\mathcal{P}_{2-2k,n,N}^{\varrho}$ and $\mathcal{Y}_{2-2k,n,N}$ may be quickly determined.   Moreover, up to constant functions if $k=1$, all weight $2-2k\leq 0$ polar harmonic Maass forms, and hence in particular all meromorphic modular forms, may be expressed as linear combinations of $\mathcal{P}_{2-2k,n, N}^{\varrho}$ and $\mathcal{Y}_{2-2k,n,N}$.  In this language, the necessary and sufficient condition 
implied by (a special case of) Riemann--Roch is equivalent to determining whether a given linear combination of these polar harmonic Poincar\'e series is meromorphic.

\begin{theorem}\label{Peterssonch}
For $\Gamma_0(N)$-inequivalent points $\tau_1,\dots,\tau_{r}\in\H$  and $k\geq 1$, there exists a weight $2-2k$ meromorphic modular form on $\Gamma_0(N)$ with principal parts at each cusp $\varrho$ equal to $\sum_{n<0} a_{\varrho}(n) e^{\frac{2\pi i nz}{\ell_{\varrho}}}$ and principal parts in $\H$ given by
$$
\sum_{d=1}^{r}\left(z-\overline{\tau_{d}}\right)^{2k-2}\sum_{\substack{n \equiv k-1 \pmod{\omega_{\tau_{d}}}\\ n<0}} b_{\tau_{d}}(n) X_{\tau_{d}}^{n}(z)
$$
if and only if, for every cusp form $g\in S_{2k}(N)$, the constants $a_{\varrho}(n)$ and $b_{\tau_{d}}(n)$ satisfy the principal part condition
\begin{equation}\label{eqn:Peterssonch}
\frac{1}{2\pi i }\sum_{\varrho\in\mathcal{S}_N} \sum_{n>0}a_\varrho(-n) a_{g,\varrho}(n) +\sum_{d=1}^{r} \frac{1}{2iv_{d} \omega_{\tau_{d}}}\sum_{\substack{n\equiv 1-k\pmod{\omega_{\tau_{d}}}\\ n>0}}b_{\tau_{d}}(-n)a_{ g,\tau_{d}}(n-1)=0,
\end{equation}
where $a_{g,\varrho}(n)$ is the $n$th Fourier coefficient of $g$ at the cusp $\varrho$ and $a_{g,\mathfrak{z}}(n)$ is the $n$th coefficient in the elliptic expansion of $g$ around $\mathfrak{z}$.
Specifically, the weight $2-2k$ polar harmonic Maass form
\begin{equation}\label{eqn:genpolar}
\sum_{\varrho\in\mathcal{S}_N} \sum_{n<0} a_{\varrho}(n) \mathcal{P}_{2-2k,n,N}^{\varrho}(z) + \sum_{d=1}^{r} \sum_{\substack{n\equiv k-1\pmod{\omega_{\tau_{d}}}\\ n<0}} b_{\tau_{d}}(n) \mathcal{Y}_{2-2k,n,N}(\tau_{d},z)
\end{equation}
is a meromorphic modular form if and only if \eqref{eqn:Peterssonch} is satisfied.
\end{theorem}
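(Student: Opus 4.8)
The plan is to reduce Theorem~\ref{Peterssonch} to a statement about the polar harmonic Maass form in \eqref{eqn:genpolar}, which we call $\mathcal{H}$. By the properties of $\mathcal{P}_{2-2k,n,N}^{\varrho}$ and $\mathcal{Y}_{2-2k,n,N}$ recalled in the introduction, $\mathcal{H}$ is automatically a weight $2-2k$ polar harmonic Maass form on $\Gamma_0(N)$ whose principal parts at the cusps and in $\H$ are precisely the prescribed ones (the factor $(z-\overline{\tau_d})^{2k-2}$ and the congruence condition on $n$ arising from the explicit shape of $\mathcal{Y}_{2-2k,n,N}(\tau_d,z)$ near $z=\tau_d$). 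Hence the theorem is equivalent to the assertion that $\mathcal{H}$ is meromorphic if and only if \eqref{eqn:Peterssonch} holds; and a polar harmonic Maass form of weight $\leq 0$ is meromorphic exactly when its non-meromorphic part $\xi_{2-2k}(\mathcal{H})$ vanishes, where $\xi_{2-2k}$ is the usual lowering-type operator sending weight $2-2k$ polar harmonic Maass forms to weight $2k$ meromorphic (in fact, here cusp-form-like) objects. So the crux is to compute $\xi_{2-2k}(\mathcal{H})$ and identify when it is zero.

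First I would record the images of the building blocks under $\xi_{2-2k}$. For the cuspidal Poincar\'e series, $\xi_{2-2k}(\mathcal{P}_{2-2k,n,N}^{\varrho})$ is (a constant multiple of) a classical weight $2k$ cuspidal Poincar\'e series $P_{2k,-n,N}^{\varrho}$ attached to the cusp $\varrho$; for the elliptic family, $\xi_{2-2k}(\mathcal{Y}_{2-2k,n,N}(\tau_d,z))$ is (a constant multiple of) a weight $2k$ meromorphic, indeed holomorphic cusp-form-valued, elliptic Poincar\'e series around $\tau_d$ — this is where the differential-operator construction of $\mathcal{Y}_{2-2k,n,N}$ from $\mathcal{Y}_{0,-1,N}$ pays off, since $\xi$ interacts predictably with those raising operators. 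Then $\xi_{2-2k}(\mathcal{H})$ is a cusp form in $S_{2k}(N)$, and it vanishes if and only if it is orthogonal to all of $S_{2k}(N)$ under the Petersson inner product. Pairing $\xi_{2-2k}(\mathcal{H})$ against an arbitrary $g\in S_{2k}(N)$ and using that the Petersson inner product of $g$ with a cuspidal (resp.\ elliptic) Poincar\'e series reproduces the appropriate Fourier coefficient $a_{g,\varrho}(n)$ (resp.\ elliptic coefficient $a_{g,\tau_d}(n-1)$) of $g$ — the standard ``Poincar\'e series pick out coefficients'' principle, adapted to the elliptic expansion — turns the condition $\langle \xi_{2-2k}(\mathcal{H}),g\rangle=0$ precisely into \eqref{eqn:Peterssonch}, after tracking the normalizing constants $\tfrac{1}{2\pi i}$ and $\tfrac{1}{2iv_d\omega_{\tau_d}}$.

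I expect the main obstacle to be the elliptic (interior-singularity) contribution rather than the cuspidal one, on two counts. First, the inner-product computation $\langle g, \mathcal{Y}_{2-2k,n,N}(\tau_d,\cdot)\rangle$ must be carried out with care, since the Poincar\'e series $\mathcal{Y}$ itself is only conditionally defined through Hecke's trick and has a pole; unfolding is not immediately legitimate and one must either regularize the integral or deform the contour around $\tau_d$, then evaluate the resulting local integral against the elliptic expansion of $g$. Second, one must match the congruence conditions and the power $(z-\overline{\tau_d})^{2k-2}$ on the two sides: the principal part is stated with $n\equiv k-1\pmod{\omega_{\tau_d}}$ while the pairing produces coefficients $a_{g,\tau_d}(n-1)$ with $n\equiv 1-k\pmod{\omega_{\tau_d}}$, and verifying this bookkeeping (together with the factor $v_d$ coming from the invariant measure near $\tau_d$) is the delicate part. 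The cuspidal part is routine: it is the classical statement that the regularized Petersson pairing of $g$ with $\mathcal{P}_{2-2k,n,N}^{\varrho}$ equals a constant times $a_{g,\varrho}(-n)$, and I would cite or adapt the standard argument. Once both pairings are in hand, the equivalence ``$\mathcal{H}$ meromorphic $\iff$ $\xi_{2-2k}(\mathcal{H})=0$ $\iff$ \eqref{eqn:Peterssonch}'' follows, and the ``specifically'' clause of the theorem is then immediate since \eqref{eqn:genpolar} is exactly the form $\mathcal{H}$ built to have the prescribed principal parts.
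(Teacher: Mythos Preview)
Your overall strategy is sound and would work, but it differs from the paper's route in one important respect. Both arguments start the same way: build $\mathcal{H}$ as in \eqref{eqn:genpolar}, note it has the prescribed principal parts (this is Proposition~\ref{prop:basis}), and observe that $\mathcal{H}$ is meromorphic iff $\xi_{2-2k}(\mathcal{H})=0$ iff $(g,\xi_{2-2k}(\mathcal{H}))=0$ for all $g\in S_{2k}(N)$. Where you diverge is in evaluating that inner product. You propose to compute $\xi_{2-2k}$ of each building block explicitly as a weight $2k$ (cuspidal or elliptic) Poincar\'e series and then invoke the ``Poincar\'e series pick out coefficients'' principle; as you correctly anticipate, the elliptic piece is delicate because unfolding is not immediately justified and one must work through the analytic continuation. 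The paper instead bypasses any explicit computation of $\xi_{2-2k}(\mathcal{H})$: it extends the Bruinier--Funke pairing $\{g,F\}:=(g,\xi_{2-2k}(F))$ to polar harmonic Maass forms (Proposition~\ref{expansionprod}) by applying Stokes' theorem on a fundamental domain with small discs around the interior singularities removed and evaluating the residues. This yields \eqref{eqn:gFpair} directly in terms of the principal-part coefficients of $F$ and the elliptic/Fourier coefficients of $g$, with no unfolding, no regularization, and no need to identify $\xi_{2-2k}(\mathcal{Y}_{2-2k,n,N})$ as a specific Poincar\'e series.

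In short: your approach is correct but trades one hard computation (the Stokes/residue argument of Proposition~\ref{expansionprod}) for another (explicit $\xi$-images plus a careful elliptic unfolding). The paper's Stokes route is cleaner here because it uses only the principal parts of $\mathcal{H}$, which are known by construction, and produces the constants $\tfrac{1}{2\pi i}$ and $\tfrac{1}{2iv_d\omega_{\tau_d}}$ directly from the local residue calculation rather than from matching normalizations of Poincar\'e series.
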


\begin{remarks}
\noindent

\noindent
\begin{enumerate}[leftmargin=*]
\item Note that \eqref{eqn:Peterssonch} only has to be checked for $\dim_{\C} S_{2k} (N)$  many cusp forms.
\item For genus $0$ subgroups there is a simpler direct proof of Theorem \ref{Peterssonch}, using explicit basis elements.
\item An alternative approach for constructing basis elements is to average 
coefficients in the elliptic expansion of a Maass form. For a good description of such types of Poincar\'e series, see \cite{IO}, 
while the case of forms with singularities at the cusps  \cite{Hejhal,Niebur}.
\end{enumerate}
\end{remarks}
By computing the Fourier coefficients of the basis elements, Theorem \ref{Peterssonch} yields the Fourier expansions of all meromorphic modular forms.
\begin{corollary}\label{cor:Fourier}
Suppose that $\tau_1,\dots,\tau_{r}\in\H$ are $\Gamma_0(N)$-inequivalent and that there exists a meromorphic modular function  $f$ on $\Gamma_0(N)$ with principal parts at each cusp $\varrho$ equal to $\sum_{n<0} a_{\varrho}(n) e^{\frac{2\pi i nz}{\ell_{\varrho}}}$ and principal parts in $\H$ given by
$$
\sum_{d=1}^{r}\sum_{\substack{n \equiv 0 \pmod{\omega_{\tau_{d}}}\\ n<0}} b_{\tau_{d}}(n) X_{\tau_{d}}^{n}(z).
$$
The function $f$ has a Fourier expansion which is valid for $y$ sufficiently large (depending on $v_1,\dots,v_d$).  For $m\in\N$, the $m$th Fourier coefficient of $f$ is given by
$$
\sum_{\varrho\in\mathcal{S}_N} \sum_{n<0} a_{\varrho}(n)
c\left(\mathcal{P}_{0,n,N}^{\varrho},m\right)
 + \sum_{d=1}^{r} \sum_{\substack{n\equiv 0\pmod{\omega_{\tau_{d}}}\\ n<0}} b_{\tau_{d}}(n) c\left(\mathcal{Y}_{0,n,N}(\tau_{d},\cdot),m\right),
$$
with $c(g,m)$ the $m$th coefficient of $g$.  The coefficients of $f$ are explicitly given, independent of $\mathcal{P}_{0,n,N}^{\varrho}$ and $\mathcal{Y}_{0,n,N}$, in Theorem \ref{thm:Fourier}.
 \end{corollary}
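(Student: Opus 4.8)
The plan is to recognize the hypotheses as precisely the $k=1$ case of Theorem \ref{Peterssonch}, and then to use that theorem to replace $f$ by the explicit linear combination of Poincar\'e series \eqref{eqn:genpolar}, up to an additive constant that will disappear once nonconstant Fourier coefficients are extracted.

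First I would specialize Theorem \ref{Peterssonch} to $k=1$. Then the weight $2-2k$ equals $0$, the factor $(z-\overline{\tau_d})^{2k-2}$ becomes $1$, and the congruence $n\equiv k-1\pmod{\omega_{\tau_d}}$ reads $n\equiv 0\pmod{\omega_{\tau_d}}$, so the principal parts prescribed there agree exactly with those of $f$. Since a meromorphic modular function with these principal parts is assumed to exist, the equivalence in Theorem \ref{Peterssonch} forces the principal part condition \eqref{eqn:Peterssonch} to hold for every $g\in S_2(N)$, and hence the explicit form
$$
F(z):=\sum_{\varrho\in\mathcal{S}_N}\sum_{n<0}a_\varrho(n)\,\mathcal{P}_{0,n,N}^{\varrho}(z)+\sum_{d=1}^{r}\sum_{\substack{n\equiv 0\pmod{\omega_{\tau_d}}\\ n<0}}b_{\tau_d}(n)\,\mathcal{Y}_{0,n,N}(\tau_d,z)
$$
arising from \eqref{eqn:genpolar} is itself a meromorphic modular function. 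By the defining properties of the building blocks, $F$ has exactly the same principal parts as $f$ at every cusp and at each $\tau_d$, and no further singularities in $\Gamma_0(N)\backslash\H\cup\mathcal{S}_N$.

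Next I would compare $f$ with $F$. Their difference $f-F$ is a weight $0$ meromorphic modular form all of whose principal parts cancel, so it is holomorphic throughout $\H$ and at every cusp; as a holomorphic function on the compact surface $X_0(N)$ it must be constant, giving $f=F+c$ for some $c\in\C$. For the existence of the Fourier expansion itself, I would note that $f$ is invariant under $z\mapsto z+1$, while its poles in $\H$ form the $\Gamma_0(N)$-orbits of $\tau_1,\dots,\tau_r$. Each such orbit is discrete with imaginary parts $\im(\gamma\tau_d)=v_d/|c\tau_d+d|^2$ accumulating only toward the real cusps, hence bounded above by some $Y_0$ depending only on $v_1,\dots,v_r$. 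For $y>Y_0$ the function $f$ is therefore holomorphic and $1$-periodic, so it admits a convergent Fourier expansion there, proving the first assertion.

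Finally I would extract coefficients. Because $F$ is a finite linear combination of basis elements, each possessing a Fourier expansion valid for $y$ large, linearity of the coefficient functional $g\mapsto c(g,m)$ shows that the $m$th Fourier coefficient of $F$ equals the asserted double sum. Since $f=F+c$, the coefficients of $f$ and $F$ coincide for every $m\in\N$, the constant $c$ affecting only the $0$th coefficient, which yields the stated formula. The one delicate point, and the main obstacle I would flag, is exactly this constant ambiguity: $f$ is pinned down by its principal parts only up to the one-dimensional space of holomorphic weight $0$ forms, which is precisely why the coefficient formula is asserted only for $m\geq 1$. Everything else follows directly from Theorem \ref{Peterssonch} together with the linearity of the Fourier expansion, with the explicit values of $c(\mathcal{P}_{0,n,N}^{\varrho},m)$ and $c(\mathcal{Y}_{0,n,N}(\tau_d,\cdot),m)$ deferred to Theorem \ref{thm:Fourier}.
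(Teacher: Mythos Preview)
Your proposal is correct and follows essentially the same approach as the paper, which treats the corollary as an immediate consequence of Theorem~\ref{Peterssonch} (together with the explicit expansions later recorded in Theorem~\ref{thm:Fourier}). You supply more detail than the paper does---in particular the argument that $f-F$ is constant via holomorphicity on $X_0(N)$, and the bound on $\im(M\tau_d)$ ensuring a pole-free horizontal strip---but these are exactly the standard ingredients the paper leaves implicit, and the uniqueness-up-to-constant is also stated in Proposition~\ref{prop:basis}.
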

\begin{remarks}
\noindent

\noindent
\begin{enumerate}[leftmargin=*]
\item
While the functions $\mathcal{Y}_{2-2k,n,N}$ are useful for Theorem \ref{Peterssonch}, for $k>1$ there are forms constructed from $\mathcal{Y}_{2-2k,-1,N}$ by applying another natural differential operator, known as the raising operator, which yield Fourier expansions closely resembling the expansions given by Hardy and Ramanujan for the reciprocal of the weight $6$ Eisenstein series.  The authors \cite{BKFourier} applied this method to obtain the Fourier expansions of negative-weight meromorphic modular forms.
\item
The expansions of $F$ at the other cusps can easily be derived from Lemma \ref{lem:alpha/gammalim} and the definition \eqref{eqn:Ydef} of $\mathcal{Y}_{0,n,N}$.
\item
The result for $k>1$ was proven by Petersson (see (69) of \cite{Pe2}).  Furthermore, in this case, one can recognize the $m$th coefficient of $\mathcal{Y}_{2-2k,-1,N}$ as a constant multiple of the weight $2k$ Poincar\'e series with principal part $e^{-2\pi i m \mathfrak{z}}$ at $i\infty$. Using this, one can write the Fourier coefficients of a given weight $2-2k$ meromorphic modular form as the image of an operator acting on weight $2k$ meromorphic modular forms.  The explicit version of Corollary \ref{cor:Fourier} given in Theorem \ref{thm:Fourier} yields an analogous operator on weight $2$ meromorphic modular forms, but we do not work out the details here.
\end{enumerate}
\end{remarks}

In this paper, we do not extensively investigate the properties of the functions $
\mathfrak{z}\mapsto \mathcal{Y}_{0,n,N}(\mathfrak{z},z)
$.
  However, there are a number of properties related to weight $2$ meromorphic modular forms which are worth noting.  One can show that $
\mathfrak{z}\mapsto \mathcal{Y}_{0,-1,N}(\mathfrak{z},z)
$ satisfies weight $2$ modularity.  Taking the trace over $z=\tau_Q$ for roots $\tau_Q$ of inequivalent binary quadratic forms $Q$ of discriminant $D<0$ yields a weight 2 version of the functions 
$$
f_{k,D}(\mathfrak{z}):=\sum_{\substack{Q=[a,b,c]\\ b^2-4ac=D}}Q(\mathfrak{z},1)^{-k}.
$$
The analogous functions $f_{k,D}$, with $D>0$, are weight $2k$ cusp forms which were investigated by Zagier \cite{ZagierRQ} and played an important role in Kohnen and Zagier's construction of a kernel function \cite{KohnenZagier} for the Shimura \cite{Shimura} and Shintani \cite{Shintani} lifts.  Kohnen and Zagier then used this kernel function to prove the non-negativity of twisted central $L$-values of cusp forms.  As shown by  Bengoechea \cite{Bengoechea}, the $f_{k,D}$ functions, with $D<0$, are meromorphic modular forms of weight $2k$ with poles of order $k$ at $\mathfrak{z}=\tau_Q$ and which  decay like cusp forms towards the cusps.  The authors \cite{BKcycle} proved that inner products of these meromorphic modular forms with other meromorphic modular forms lead to a new class of modular objects, the first case of which is a polar harmonic Maass form, and that they also appear as theta lifts, which was generalized to vector-valued forms by Zemel \cite{Zemel}.  Noting the applications of the $f_{k,D}$ functions for $D>0$, it may be interesting to investigate the properties of $\mathcal{Y}_{0,-1,N}$ if $z$ is a CM-point.  In particular, the periods of these forms are of interest because they have geometric applications. This will be 
studied further in future research.

We do however investigate one aspect of the properties in the $\mathfrak{z}$-variable here.  
Recall that if $f$ is a weight $2$ meromorphic modular form, then $f(\mathfrak{z})d\mathfrak{z}$ is a meromorphic differential, and, in the classical language, we say that the differential is of the \begin{it}first kind\end{it} if $f$ is holomorphic, it is of the \begin{it}second kind\end{it} if $f$ is not holomorphic but the residue vanishes at every pole, and of the \begin{it}third kind\end{it} if all of the poles are simple (for further information about the connection between differentials and meromorphic modular forms, see page 182 of \cite{Pe6}).  One can use $\mathcal{Y}_{0,-1,N}$ to construct differentials of all three kinds.

\begin{theorem}\label{thm:differentials}
\noindent

\noindent
\begin{enumerate}[leftmargin=*]
\item[\rm(1)]
As a function of $\mathfrak{z}$, $\mathcal{Y}_{0,-1,N}(\mathfrak{z},z)$ corresponds to a differential of the third kind.
\item[\rm(2)] The function $z\mapsto\xi_{0,z}(\mathcal{Y}_{0,-1,N}(\mathfrak{z},z))$ corresponds to a differential of the first kind (as a function of $z$).  In other words, $\xi_{0,z}(\mathcal{Y}_{0,-1,N}(\mathfrak{z},z))$ is a cusp form. 
\item[\rm(3)] The function $z\mapsto D_z\left(\mathcal{Y}_{0,-1,N}(\mathfrak{z},z)\right)$ corresponds to a differential of the second kind.
\end{enumerate}
\end{theorem}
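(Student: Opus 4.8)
The strategy is to first understand $\mathcal{Y}_{0,-1,N}(\mathfrak{z},z)$ as a function of the variable $\mathfrak{z}$, and then, for parts (2) and (3), to apply the operators $\xi_{0,z}$ and $D_z$ to the elliptic expansion of $\mathcal{Y}_{0,-1,N}(\mathfrak{z},z)$ around $z=\mathfrak{z}$ (already available from the construction) together with its Fourier expansions at the cusps. Throughout one uses the standard facts that $\xi_{0,z}$ maps weight $0$ polar harmonic Maass forms to weight $2$ meromorphic modular forms and annihilates functions that are meromorphic in $z$, and that $D_z=\frac{1}{2\pi i}\partial_z$ sends a weight $0$ polar harmonic Maass form to a weight $2$ meromorphic modular form, since in weight $0$ the derivative raises the weight by $2$ with no correction term and $\partial_z$ of a weight $0$ harmonic function is holomorphic away from its poles.

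For part (1), I would first read off from the Hecke-continued definition \eqref{eqn:Ydef} that $\mathfrak{z}\mapsto\mathcal{Y}_{0,-1,N}(\mathfrak{z},z)$ is a weight $2$ meromorphic modular form on $\Gamma_0(N)$: each summand is meromorphic in $\mathfrak{z}$ and carries the weight $2$ automorphy factor in $\mathfrak{z}$, the defining series converges locally uniformly for $\mathfrak{z}$ outside the $\Gamma_0(N)$-orbit of $z$, and the analytic continuation furnished by Hecke's trick introduces no new $\mathfrak{z}$-singularity. Next I locate the poles: since, as a function of $z$, $\mathcal{Y}_{0,-1,N}(\mathfrak{z},z)$ has its only singularity in $\H$ at $z=\mathfrak{z}$, all of its $\mathfrak{z}$-singularities occur at points $\mathfrak{z}$ that are $\Gamma_0(N)$-equivalent to $z$, and near $\mathfrak{z}=z$ a short computation with \eqref{eqn:Ydef}, using that the singular-in-$z$ contribution there is that of $X_{\mathfrak{z}}^{-1}(z)=1+\frac{\mathfrak{z}-\overline{\mathfrak{z}}}{z-\mathfrak{z}}$, shows that $\mathcal{Y}_{0,-1,N}(\mathfrak{z},z)$ has a simple pole in $\mathfrak{z}$ at $\mathfrak{z}=z$ with non-zero residue. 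Finally one checks that the form is holomorphic on the remainder of $\H$ and has at worst a constant term at each cusp, so that the differential $\mathcal{Y}_{0,-1,N}(\mathfrak{z},z)\,d\mathfrak{z}$ has only simple poles; hence it is of the third kind.

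For part (2), since the entire singular part of $\mathcal{Y}_{0,-1,N}(\mathfrak{z},z)$ at $z=\mathfrak{z}$ is the meromorphic function $X_{\mathfrak{z}}^{-1}(z)$ (the stated principal part accounts for all of the singular behaviour), the function $\xi_{0,z}(\mathcal{Y}_{0,-1,N}(\mathfrak{z},z))$ extends holomorphically across $z=\mathfrak{z}$; and at each cusp $\xi_{0,z}$ kills the holomorphic part, while the coefficient of the non-holomorphic term in the expansion of $\mathcal{Y}_{0,-1,N}$ there vanishes (this is read off from the Fourier expansions obtained via Lemma \ref{lem:alpha/gammalim}), so $\xi_{0,z}(\mathcal{Y}_{0,-1,N})$ decays at every cusp. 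A weight $2$ form on $\Gamma_0(N)$ that is holomorphic on $\H$ and vanishes at all cusps is a cusp form, i.e. a differential of the first kind. For part (3), write $\mathcal{Y}_{0,-1,N}(\mathfrak{z},z)=1+\frac{\mathfrak{z}-\overline{\mathfrak{z}}}{z-\mathfrak{z}}+h(z)$ near $z=\mathfrak{z}$; the remainder $h$ is bounded and harmonic on a punctured neighbourhood of $\mathfrak{z}$, hence extends harmonically across $\mathfrak{z}$, so $\partial_z h$ is holomorphic there, while $D_z\!\left(1+\frac{\mathfrak{z}-\overline{\mathfrak{z}}}{z-\mathfrak{z}}\right)=\frac{-(\mathfrak{z}-\overline{\mathfrak{z}})}{2\pi i(z-\mathfrak{z})^2}$ is a double pole with vanishing residue. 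The same vanishing of the non-holomorphic cusp coefficient used in part (2) shows that $D_z(\mathcal{Y}_{0,-1,N})$ has no pole at the cusps. Hence $D_z(\mathcal{Y}_{0,-1,N})$ has only double poles with zero residue and is a differential of the second kind.

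The main obstacle is the input needed before these operator computations: proving weight $2$ modularity and meromorphy of $\mathfrak{z}\mapsto\mathcal{Y}_{0,-1,N}(\mathfrak{z},z)$ directly from \eqref{eqn:Ydef} while ruling out spurious $\mathfrak{z}$-singularities arising from the Hecke continuation, and establishing that the non-holomorphic coefficient of $\mathcal{Y}_{0,-1,N}$ at each cusp is zero. Once these are in hand, parts (1)--(3) reduce to the standard behaviour of $\xi_{0,z}$ and $D_z$ applied to the explicitly known expansions of $\mathcal{Y}_{0,-1,N}$.
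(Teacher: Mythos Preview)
Your overall strategy is sound and close to the paper's, but there is one concrete error that recurs in your arguments for parts (2) and (3). You assert that ``the coefficient of the non-holomorphic term in the expansion of $\mathcal{Y}_{0,-1,N}$ [at each cusp] vanishes'' and say this is read off from Lemma~\ref{lem:alpha/gammalim}. This is false: the expansion in Lemma~\ref{lem:alpha/gammalim} plainly contains the non-holomorphic terms $\sum_{n\geq 1}(\cdots)e^{-2\pi i n\overline{z}/\ell}$. The fix is easy. For (2), these terms map under $\xi_{0,z}$ to a constant multiple of $\sum_{n\geq 1}n\,\overline{(\cdots)}\,e^{2\pi i n z/\ell}$, which decays at the cusp, so the image has no constant term and is indeed a cusp form. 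For (3), these antiholomorphic terms are annihilated by $D_z=\frac{1}{2\pi i}\partial_z$, so $D_z(\mathcal{Y}_{0,-1,N})$ is holomorphic at every cusp. With this correction your arguments for (2) and (3) go through.

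It is also worth noting that the paper handles (2) by a shorter route than yours: rather than analysing the cuspidal and elliptic expansions, it simply invokes Lemma~\ref{lem:xiPsi}, which already identifies $\xi_{0,z}(y\Psi_{2,N}(\mathfrak{z},z))$ with Petersson's function $\Phi_N(z,\mathfrak{z})$, a cusp form by Satz~2 of \cite{Pe}; since $\widehat{E}_2(\mathfrak{z})$ does not depend on $z$, it is annihilated by $\xi_{0,z}$, and the conclusion is immediate. For (1) and (3) the paper's argument is essentially the same as yours, phrased in terms of the decomposition $\sum_1+\sum_2+\sum_3$: the $\mathfrak{z}$-poles come only from the meromorphic pieces $\sum_1$ and $\sum_2$ and are simple (with principal part $-i/(\mathfrak{z}-z)$), while for (3) the $z$-derivatives of those meromorphic pieces have trivial residue and $\sum_3$ is real analytic in $z$ and hence contributes nothing to the residue.
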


  Although we only investigate the Fourier coefficients of $\mathcal{Y}_{0,n,N}$ in the $z$-variable, the same techniques can be applied to compute the Fourier coefficients in the $\mathfrak{z}$-variable.  Noting the connections to differentials given above, it might be interesting to explicitly determine the behavior towards each cusp in order to compute the differential of the third kind associated with $\mathcal{Y}_{0,-1,N}$.

 Another direction future research may take involves the question of whether $\mathcal{Y}_{0,n,N}$ may be constructed in a similar manner for more general subgroups.  The methods in this paper can indeed be extended to obtain more general subgroups.  The main difficulty lies in proving analytic continuation of the Kloosterman zeta functions for these subgroups.  Finally we want to mention that the properties as functions of $\mathfrak{z}$ are of interest. 

The paper is organized as follows.  In Section \ref{sec:prelim}, we give background on polar harmonic Maass forms and in particular harmonic Maass forms.  We then determine the shape of the elliptic expansions of polar harmonic Maass forms.  In Section \ref{sec:analcont} we use Hecke's trick together with a splitting of \cite{Pe} to analytically continue two-variable elliptic Poincar\'e series $
y^{2k-1}
\Psi_{2k,N}(\mathfrak{z},z)$ to include $k=1$.  After that, we determine the properties of the analytic continuation as a function of $z$ in Section \ref{sec:explicit}.  In particular, $y\Psi_{2,N}(\mathfrak{z},z)$ yields $\mathcal{Y}_{0,-1,N}(\mathfrak{z},z)$, up to a constant multiple of the non-holomorphic weight $2$ Eisenstein series $\widehat{E}_2(\mathfrak{z})$ and is invariant under the action of $\Gamma_0(N)$ as functions of $z$.  The Fourier expansions of $y\Psi_{2,N}(\mathfrak{z},z)$ at each cusp are then computed in Section \ref{sec:cusps} and an explicit basis of all polar harmonic Maass forms is constructed.  Finally, in Section \ref{sec:mainproof}, we extend a pairing of Bruinier and Funke \cite{BF} to obtain a pairing between weight $0$ polar harmonic Maass forms and weight $2$ cusp forms.  For a fixed polar harmonic Maass form, this pairing is trivial if and only if the polar harmonic Maass form is a meromorphic modular form.  We conclude the paper by computing the pairing between \eqref{eqn:genpolar} and every cusp form in $S_2(N)$, yielding Theorems \ref{Peterssonch} and \ref{thm:Fourier}.

\section*{Acknowledgement}
The authors thank Paul Jenkins, Steffen L\"obrich, Ken Ono, Martin Raum, Olav Richter, Larry Rolen, and Shaul Zemel for helpful comments on an earlier version of this paper.

\section{Preliminaries}\label{sec:prelim}
In this section, we define the space of polar harmonic Maass forms and some of its distinguished subspaces and then 
determine the shape of  the elliptic expansions of such forms.  For background on the well-studied subspace of harmonic Maass forms, which were introduced by Bruinier and Funke, we refer the reader for example to \cite{BF,Fay,Hejhal,Niebur}.

\subsection{Polar harmonic Maass forms}
We are now ready to define the modular objects which are central for this paper.
\begin{definition}
For $\kappa \in\Z_{<1}$ and $N\in\N$, a \begin{it}polar harmonic Maass form\end{it} of weight $\kappa$ on $\Gamma_0(N)$ is a function $F:\H\to\overline{\C} := \C \cup \{\infty\}$ which is real analytic outside of a discrete set and which satisfies the following conditions:
\noindent

\noindent
\begin{enumerate}[leftmargin=*]
\item
For every $M=\left(\begin{smallmatrix}a&b\\ c&d\end{smallmatrix}\right)\in\Gamma_0(N)$, we have $F|_{\kappa}M=F$, where
\begin{equation*}
F(z)|_{\kappa}M  :=j(M,z)^{-\kappa}F(Mz)
\end{equation*}
with $j(M,z):=cz+d$.
\item
The function $F$ is annihilated by the \begin{it}weight $\kappa$ hyperbolic Laplacian\end{it}
$$
\Delta_{\kappa}:=-y^2\left(\frac{\partial^2}{\partial x^2}+\frac{\partial^2}{\partial y^2}\right)+i\kappa y\left(\frac{\partial}{\partial x}+i\frac{\partial}{\partial y}\right).
$$
\item
For all $\mathfrak{z}\in \H$, there exists $n\in\N_0$ such that $(z-\mathfrak{z})^n F(z)$ is bounded in a neighborhood of $\mathfrak{z}$.
\item
The function $F$ grows at most linear exponentially towards cusps of $\Gamma_0(N)$.
\end{enumerate}
If one allows in (2) a general eigenvalue under $\Delta_{\kappa}$, then one obtains a \begin{it}polar Maass form.\end{it} Moreover \begin{it}weak Maass forms\end{it} are polar Maass forms which do not have any singularities in $\H$.
\end{definition}

We denote the space of all weight $\kappa$ polar harmonic Maass forms on $\Gamma_0(N)$ by $\mathcal{H}_{\kappa}(N)$.  An important subspace of $\mathcal{H}_{\kappa}(N)$ is obtained by noting that $
\Delta_{\kappa}
$ splits as
\begin{equation}\label{eqn:Deltasplit}
\Delta_\kappa=-\xi_{2-\kappa}\circ \xi_\kappa,
\end{equation}
where $\xi_{\kappa}:=2iy^{\kappa} \overline{\frac{\partial}{\partial \overline{z}}}$.  If $F$ satisfies weight $\kappa$ modularity, then $\xi_{\kappa}(F)$ is modular of weight $2-\kappa$.  The kernel of $\xi_{\kappa}$ is the subspace $\mathcal{M}_{\kappa}(N)$ 
of meromorphic modular forms, while one sees from the decomposition \eqref{eqn:Deltasplit} that if $F\in \mathcal{H}_{\kappa}(N)$, then $\xi_{\kappa}(F)\in \mathcal{M}_{2-\kappa}(N)$.  It is thus natural to consider the subspace $\mathcal{H}_{\kappa}^{\operatorname{cusp}}(N)\subseteq \mathcal{H}_{\kappa}(N)$ consisting of those $F$ for which $\xi_{\kappa}(F)$ is a cusp form.  The space $\mathcal{H}_{\kappa}^{\operatorname{cusp}}(N)$ decomposes into the direct sum of the subspace $H_{\kappa}^{\operatorname{cusp}}(N)$ of harmonic Maass forms which map to cusp forms under $\xi_{\kappa}$ and the subspace $\H_{\kappa}^{\operatorname{cusp}}(N)$ of polar harmonic Maass forms whose singularities in $\H$ are all poles and which are bounded towards all cusps.
In addition to $\xi_{\kappa}$, further operators on polar Maass forms appear in another natural splitting $\Delta_{\kappa}= -R_{\kappa-2}\circ L_{\kappa}$. Here $R_{\kappa}:=2i \frac{\partial}{\partial z} + \kappa y^{-1}$ is the Maass \begin{it}raising operator\end{it} and $L_{\kappa}:=-2i y^2\frac{\partial}{\partial \overline{z}}$ is the Maass \begin{it}lowering operator.\end{it}  The raising (resp. lowering) operator sends weight $\kappa$ polar Maass forms to weight $\kappa+2$ (resp. $\kappa-2$) polar Maass forms with different eigenvalues. 

Every $F\in \mathcal{H}_{\kappa}^{\operatorname{cusp}}(N)$ has a Fourier expansion around each $\varrho\in\mathcal{S}_N$ given by
$$
F_{\varrho}(z)=\sum_{n\gg -\infty}a_{F,\varrho}(n)e^{\frac{2\pi i nz}{\ell_{\varrho}}} + \sum_{n < 0}b_{F,\varrho}(n) \Gamma\left(1-\kappa,\frac{4\pi|n|y}{\ell_{\varrho}}\right)e^{\frac{2\pi i nz}{\ell_{\varrho}}},
$$
where $F_{\varrho}:=F|_{2-\kappa}M_{\varrho}\text{ with } M_{\varrho}^{-1}
\varrho =i\infty$ ($M_{\varrho}\in\SL_2(\Z)$) and $\Gamma(s,y):=\int_{y}^{\infty} t^{s-1}e^{-t}dt$ is the \begin{it}incomplete gamma function\end{it}.
The first sum is the \begin{it}meromorphic part\end{it} of $F_{\varrho}$ and the second sum is the \begin{it}non-meromorphic part\end{it} of $F_{\varrho}$.
  We call $\sum_{n< 0} a_{F,\varrho}(n)e^{\frac{2\pi i nz}{
\ell_{\varrho}}}$ the \begin{it}principal part of $F$ at $\varrho$.\end{it} Furthermore, for each $\mathfrak{z}\in\H$, there exist finitely many $c_n\in\C$ such that, in a neighborhood around $\mathfrak{z}$,
$$
F(z) - (z-\overline{\mathfrak{z}})^{-\kappa}\sum_{n<0} c_n X_{\mathfrak{z}}^{n}(z)=O(1).
$$
We call $(z-\overline{\mathfrak{z}})^{-\kappa}\sum_{n<0} c_n X_{\mathfrak{z}}^{n}(z)$ the \begin{it}principal part of $F$ at $\mathfrak{z}$.\end{it}

\subsection{Construction of weak Maass forms}
We next recall a well-known construction of Maass--Poincar\'e series, which constitute a basis of $H_{\kappa}^{\operatorname{cusp}}(N)$.  Let $\Gamma_\varrho$ be the stabilizer of $\varrho$ in $\Gamma_0(N)$.  Moreover, for $s\in\C$ and $w\in\R\setminus\{0\}$, set
\[
\mathscr{M}_{\kappa,s}(w):=|w|^{-\frac{\kappa}{2}}M_{\text{sgn}(w)\frac{\kappa}{2}, s-\frac12}(|w|),
\]
where $M_{\nu, \mu}$ is the usual $M$-Whittaker function. Then let
\[
\phi_{\kappa,s}(z):=\mathscr{M}_{\kappa,s}(4\pi y)e^{2\pi i x}
\]
and define for $m \in -\N$ the \textit{Maass Poincar\'e series} associated to the cusp $\varrho$ 
\[
\mathcal{P}_{\kappa, m,N,s}^{\varrho}(z):=\sum_{M\in\Gamma_\varrho\backslash\Gamma_0(N)}\frac{\phi_{\kappa,s}\left(\frac{m}{\ell_{\varrho}}M_\varrho Mz\right)}{j(M_{\varrho}M, z)^{\kappa}}.
\]
Note that this function converges absolutely for $\sigma:=\mathrm{Re}(s)>1$ and is a weak Maass form with eigenvalue $s(1-s)+(\kappa^2-2\kappa)/4.$

We are particularly interested in the harmonic Maass forms $\mathcal{P}_{\kappa, m,N,s}^{\varrho}$ arising from $s=1-\kappa/2$.  To state their Fourier expansions, we 
require some notation.  Denote by $I_{\nu}$ the $\nu$th $I$-Bessel function and define for $\varrho=\alpha/\gamma\in\mathcal{S}_N$ and $n,j\in\Z$, the Kloosterman sum
\begin{equation}\label{eqn:Kagdef}
K_{\alpha, \gamma} \left( n,j;c \right):=
\sum_{\substack{a \pmod{\ell_{\varrho} c}\\ d \pmod{c} \\ ad \equiv 1 \pmod {c}\\ c\equiv - a\alpha\gamma \pmod{N}}} e_{\ell_{\varrho} c} \left( n \ell_{\varrho} d+ja\right)
\end{equation}
with $e_\ell(x):=e^{\frac{2\pi i x}{\ell}}$.  
Finally, 
$\delta_{\varrho,\mu}=1$ if $\varrho$ is equivalent to the cusp $\mu$ modulo $\Gamma_0(N)$ and $0$ otherwise.
\begin{theorem}\label{thm:weakMaass}
For every $n\in -\N$, the function $
\mathcal{P}_{\kappa,n,N,s}^{\varrho}
$ has an analytic continuation to $s=1$.  Moreover, for $\kappa\leq 0$,
$$
\mathcal{P}_{\kappa,n,N}^{\varrho}(z):=\frac1{(1-\kappa)!}\mathcal{P}_{\kappa,
n,
 N,1-\frac{\kappa}{2}}^{\varrho}\left(z\right)\in H_{\kappa}^{\operatorname{cusp}}(N).
$$
For $\varrho=\alpha/\gamma$, the meromorphic part of its Fourier expansion at $i\infty$ is given by
\begin{multline*}
 \delta_{\varrho,\infty} e^{2\pi i nz}+ (2\pi)^{2-\kappa}\ell_{\varrho}^{\kappa-1} |n|^{1-\kappa}\sum_{c\geq 1} \frac{K_{\alpha,\gamma}(n,0;c)}{c^{3-\kappa}}\\
\hfill + 2\pi\left(\frac{ |n|}{\ell_{\varrho}}\right)^{\frac{1-\kappa}{2}} \sum_{j\geq 1}j^{\frac{\kappa-1}{2}}  \sum_{c\geq 1} \frac{K_{\alpha,\gamma}(n,j;c)}{c}I_{1-\kappa}\left(\frac{4\pi}{c} \sqrt{\frac{|n|j}{\ell_{\varrho}}}\right) e^{2\pi i jz}.
\end{multline*}
Its principal part at $\mu\in \mathcal{S}_N$ is given by $\delta_{\mu,\varrho} e^{\frac{2\pi inz}{\ell_{\varrho}}}.$
\end{theorem}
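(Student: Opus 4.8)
This is the level-$N$, all-cusps version of the classical construction of Maass--Poincar\'e series (cf.\ \cite{Fay,Hejhal,Niebur}); the plan is to compute the Fourier expansion of $\mathcal{P}_{\kappa,n,N,s}^{\varrho}$ in its region of absolute convergence, to read the analytic continuation off that expansion, and then to evaluate at the harmonic value $s=1-\tfrac{\kappa}{2}$. For the first step, since $\left(\begin{smallmatrix}1&1\\0&1\end{smallmatrix}\right)\in\Gamma_0(N)$ the series is $1$-periodic, so its $m$-th Fourier coefficient at $i\infty$ equals $\int_0^1\mathcal{P}_{\kappa,n,N,s}^{\varrho}(x+iy)e^{-2\pi imx}\,dx$. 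I would split $\Gamma_{\varrho}\backslash\Gamma_0(N)$ into double cosets in $\Gamma_{\varrho}\backslash\Gamma_0(N)/\Gamma_{\infty}$, indexed by the lower-left entry $c\geq 0$ of $M_{\varrho}M$, and unfold the inner $\Gamma_{\infty}$-sum against the integral. The block $c=0$ is nonempty only if $\varrho$ is $\Gamma_0(N)$-equivalent to $\infty$, in which case it yields the diagonal term $\delta_{\varrho,\infty}\,\phi_{\kappa,s}\!\left(\tfrac{n}{\ell_{\varrho}}z\right)$; each block with $c>0$ unfolds to an integral $\int_{-\infty}^{\infty}\mathscr{M}_{\kappa,s}(\cdots)e(\cdots)\,dx$ whose $c$-dependent weight is exactly the Kloosterman sum $K_{\alpha,\gamma}(n,j;c)$ of \eqref{eqn:Kagdef} (the congruence $c\equiv -a\alpha\gamma\pmod{N}$ encoding $\Gamma_0(N)$-membership after conjugation by $M_{\varrho}$, the modulus $\ell_{\varrho}c$ recording the cusp width). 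Evaluating these standard $M$-Whittaker integrals yields, for each Fourier index $j$, a Whittaker function in $y$ together with, for $j>0$, the Bessel factor $I_{2s-1}\!\left(\tfrac{4\pi}{c}\sqrt{|n|j/\ell_{\varrho}}\right)$ and explicit powers of $y,|n|,\ell_{\varrho},j$, and, for $j=0$, a term built from a Dirichlet series $\sum_{c\geq1}K_{\alpha,\gamma}(n,0;c)\,c^{-w}$ with $s$-dependent exponent $w$.

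Next I would establish the analytic continuation. The Weil bound $K_{\alpha,\gamma}(n,j;c)\ll_{\varepsilon}c^{1/2+\varepsilon}\gcd(\cdots)$ together with the small-argument asymptotic $I_{\nu}(x)\sim(x/2)^{\nu}/\Gamma(\nu+1)$ shows that the $c$-series attached to $j>0$ behaves like $\sum_{c}c^{1/2+\varepsilon-2s}$, hence converges absolutely and locally uniformly for $\mathrm{Re}(s)>\tfrac{3}{4}$; the factor $e^{-2\pi jy}$ controls the $j$-sum, and the $j=0$ Dirichlet series is absolutely convergent at the exponent relevant here. Therefore the Fourier expansion itself provides a holomorphic continuation of $\mathcal{P}_{\kappa,n,N,s}^{\varrho}$ to $\mathrm{Re}(s)>\tfrac{3}{4}$, in particular to $s=1$ (this is genuinely needed only for $\kappa=0$, where $s=1$ lies on the boundary of absolute convergence of the defining double series) and to $s=1-\tfrac{\kappa}{2}$ for every $\kappa\leq 0$ (for $\kappa<0$ one has $1-\tfrac{\kappa}{2}>1$, already inside the region of convergence). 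Weight $\kappa$ modularity holds for $\mathrm{Re}(s)>1$ by reindexing $M\mapsto M\gamma$ with $\gamma\in\Gamma_0(N)$ and persists under continuation by uniqueness of analytic continuation; the eigenvalue $s(1-s)+(\kappa^2-2\kappa)/4$ depends continuously on $s$ and vanishes at $s=1-\tfrac{\kappa}{2}$, so $\mathcal{P}_{\kappa,n,N,1-\kappa/2}^{\varrho}$ lies in the kernel of $\Delta_{\kappa}$.

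It then remains to evaluate at $s=1-\tfrac{\kappa}{2}$ and to identify the principal parts. At this value the Whittaker function in $y$ degenerates into a combination of an exponential and an incomplete gamma function; combining this with the factors from the first step and dividing by $(1-\kappa)!$, the $j\geq 0$ contributions assemble into the displayed meromorphic part (with $I_{2s-1}$ becoming $I_{1-\kappa}$), the $j<0$ contributions into a non-meromorphic part of the shape $\sum_{j<0}\Gamma\!\left(1-\kappa,\tfrac{4\pi|j|y}{\ell_{\varrho}}\right)e^{2\pi ijz/\ell_{\varrho}}$, and the normalization $\tfrac{1}{(1-\kappa)!}$ is chosen precisely so that the diagonal term becomes $\delta_{\varrho,\infty}e^{2\pi inz}$. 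Re-running the unfolding at a general cusp $\mu$, i.e.\ expanding $\mathcal{P}_{\kappa,n,N}^{\varrho}\big|_{\kappa}M_{\mu}$, the only exponentially growing contribution once more comes from the block with vanishing lower-left entry, which is nonempty exactly when $\mu\sim\varrho$; hence the principal part at $\mu$ equals $\delta_{\mu,\varrho}e^{2\pi inz/\ell_{\varrho}}$, and in particular $\mathcal{P}_{\kappa,n,N}^{\varrho}$ has no singularity in $\H$ and is bounded at every cusp $\mu\not\sim\varrho$. Finally, $\xi_{\kappa}$ annihilates the meromorphic part and sends each non-meromorphic term to a constant multiple of $e^{2\pi i|j|z/\ell_{\varrho}}$, so $\xi_{\kappa}(\mathcal{P}_{\kappa,n,N}^{\varrho})$ is holomorphic on $\H$, modular of weight $2-\kappa$, and vanishes at every cusp, hence is a cusp form; together with the growth and singularity conditions this gives $\mathcal{P}_{\kappa,n,N}^{\varrho}\in H_{\kappa}^{\operatorname{cusp}}(N)$.

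The main obstacle is the analysis near the special point: evaluating the $M$-Whittaker integrals explicitly for generic $s$ and then controlling their degeneration at $s=1-\tfrac{\kappa}{2}$, in particular for $\kappa=0$, where $s=1$ sits on the boundary of absolute convergence of the defining series, so the continued function has to be produced entirely from the Fourier side and then verified to remain harmonic and modular with the asserted expansion. Threading the cusp widths $\ell_{\varrho}$ and the general Kloosterman sums $K_{\alpha,\gamma}(n,j;c)$ through every step, uniformly over $\mathcal{S}_N$, is the remaining bookkeeping burden on top of the classical cusp-$\infty$, level-$1$ argument.
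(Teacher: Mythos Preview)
Your sketch is correct and follows the standard unfolding argument for Maass--Poincar\'e series. Note, however, that the paper does not actually prove this theorem: it is stated as background and immediately attributed to Theorem~1.1 of \cite{BO} in the remark following the statement, with only two typographical corrections noted. So there is no ``paper's own proof'' to compare against; your outline is essentially the argument one finds in the cited reference (and in the classical sources \cite{Fay,Hejhal,Niebur} for level $1$), adapted to general level and arbitrary cusp $\varrho$ via the Kloosterman sums $K_{\alpha,\gamma}$.
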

\begin{remark}
The Fourier expansion of $\mathcal{P}_{\kappa,n,N}^{\varrho}$ was explicitly computed in Theorem 1.1 of \cite{BO}.  Note that there are two small typos in the formula in \cite{BO}; the condition $(ad,c)=1$ in (1.11) of \cite{BO} should be replaced by $ad\equiv 1\pmod{c}$ and the power of the cusp width $t_{\mu}$ (denoted $\ell_{\varrho}$ in this paper) in (1.15) should be $1/2-k/2$ instead of $-1/2-k/2$ 
(with $k$ written as $\kappa$ here).  
\end{remark}

\subsection{Elliptic expansions of Maass forms}\label{sec:ell}
In this section, we determine the elliptic expansions of polar harmonic Maass forms.  We assume throughout that $
k\in\N$.
\begin{proposition}\label{prop:ellexp}
\noindent

\noindent
\begin{enumerate}[leftmargin=*]
\item[\rm(1)]
Let $\mathfrak{z}\in\mathbb{H}$ and assume that $F$ satisfies $\Delta_{2-2k} (F) = 0$ and that there exists $n_0\in \N$ such that $r_{\mathfrak{z}}^{n_0}(z) F(z)$ is bounded in some neighborhood $\mathcal{N}$ around $\mathfrak{z}$. Then there exist $a_n, b_n\in\C$, such that, for $z\in\mathcal{N}$,
\begin{multline}\label{elliptic}
F(z)=(z-\overline{\mathfrak{z}})^{2k-2}\sum_{n\geq-n_0} a_n X_{\mathfrak{z}}^n(z) - (z-\overline{\mathfrak{z}})^{2k-2}\sum_{0\leq n\leq n_0} b_n\beta\left(1-r_{\mathfrak{z}}^2(z);2k-1,-n\right) X_{\mathfrak{z}}^n(z)\\
+ (z-\overline{\mathfrak{z}})^{2k-2}\sum_{n\leq -1} b_n\beta\left(r_{\mathfrak{z}}^2(z);-n,2k-1\right) X_{\mathfrak{z}}^n(z),
\end{multline}
where $r_{\mathfrak{z}}(z):=|X_{\mathfrak{z}}(z)|$  and $\beta(y;a,b):=\int_{0}^{y} t^{a-1}(1-t)^{b-1}dt$ is the incomplete beta function.
\item[\rm(2)]  If $F\in \mathcal{H}_{2-2k}(N)$, then, for every $\mathfrak{z}\in\H$, the sums in \eqref{elliptic} only run over those $n$ which satisfy $n\equiv k-1\pmod{\omega_{\mathfrak{z}}}$.  Furthermore, if $F\in \mathcal{H}_{2-2k}^{\operatorname{cusp}}(N)$, then the second sum in \eqref{elliptic} vanishes.
\end{enumerate}
\end{proposition}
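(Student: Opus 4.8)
The plan is to pass to the disk coordinate $w=X_{\mathfrak{z}}(z)$, a biholomorphism of a neighbourhood of $\mathfrak{z}$ onto a disk around $0$ with $r_{\mathfrak{z}}=|w|$, and to use the splitting $\Delta_{\kappa}=-\xi_{2-\kappa}\circ\xi_{\kappa}$.

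For part (1), write $F(z)=(z-\overline{\mathfrak{z}})^{2k-2}\Psi(z)$, with $\Psi$ real-analytic on a punctured neighbourhood of $\mathfrak{z}$, and regard $\Psi$ as a function of $w$ and $\overline{w}$. Using $\partial_zX_{\mathfrak{z}}=2i\mathfrak{z}_2(z-\overline{\mathfrak{z}})^{-2}$ and $1-r_{\mathfrak{z}}^2=4\mathfrak{z}_2y\,|z-\overline{\mathfrak{z}}|^{-2}$, a direct computation turns $\Delta_{2-2k}F=0$ into a single second-order elliptic equation for $\Psi$ on $0<|w|<\varepsilon$ (if one wishes, one can first conjugate by an element of $\operatorname{SL}_2(\R)$ to reduce to $\mathfrak{z}=i$, but this is not needed). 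Expanding $\Psi$ into its Fourier series in $\arg w$, i.e.\ $\Psi=\sum_{n\in\Z}\psi_n(|w|)\,w^n$, the equation separates into one ODE of hypergeometric type for each $\psi_n$; its two-dimensional solution space is spanned by the constant function, which produces the term $(z-\overline{\mathfrak{z}})^{2k-2}a_nX_{\mathfrak{z}}^n$, and by a second solution which --- after reading off the indicial exponents at the singular point $|w|=0$ --- is a constant multiple of $\beta(r_{\mathfrak{z}}^2;-n,2k-1)$ when $n\le -1$ and of $\beta(1-r_{\mathfrak{z}}^2;2k-1,-n)$ when $n\ge 0$ (with the case $n=0$ giving the logarithmic solution). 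Finally the hypothesis is imposed term by term: near $\mathfrak{z}$ the constant solution for mode $n$ is of order $|w|^n$ and the second solution of order $|w|^{-n}$ (up to a logarithm when $n=0$), so boundedness of $r_{\mathfrak{z}}^{n_0}F$ forces the first sum to range over $n\ge -n_0$ and the second over $0\le n\le n_0$, with no further restriction for $n\le -1$; convergence of the series on the full neighbourhood is inherited from the real-analyticity of $F$. Collecting the surviving modes gives \eqref{elliptic}, and the three families of functions appearing there are linearly independent, so the coefficients are uniquely determined by $F$.

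For part (2), let $\gamma$ generate the cyclic stabiliser $\Gamma_{\mathfrak{z}}$, which has order $\omega_{\mathfrak{z}}$. Since $\gamma$ fixes $\mathfrak{z}$ and $\overline{\mathfrak{z}}$ one gets $X_{\mathfrak{z}}(\gamma z)=\zeta\,X_{\mathfrak{z}}(z)$ with $\zeta:=j(\gamma,\overline{\mathfrak{z}})/j(\gamma,\mathfrak{z})=j(\gamma,\mathfrak{z})^{-2}=\gamma'(\mathfrak{z})$ (using $|j(\gamma,\mathfrak{z})|=1$), a primitive $\omega_{\mathfrak{z}}$-th root of unity, together with $(\gamma z-\overline{\mathfrak{z}})^{2k-2}=j(\gamma,z)^{2-2k}j(\gamma,\overline{\mathfrak{z}})^{2-2k}(z-\overline{\mathfrak{z}})^{2k-2}$ and $j(\gamma,\overline{\mathfrak{z}})^{2-2k}=\zeta^{1-k}$. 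Substituting \eqref{elliptic} into the weight $2-2k$ invariance $F(\gamma z)=j(\gamma,z)^{2-2k}F(z)$, and noting that the incomplete-beta factors depend only on $|X_{\mathfrak{z}}|$ and are therefore unchanged, uniqueness of the expansion forces $\zeta^{n}=\zeta^{k-1}$ for every occurring index $n$, i.e.\ $n\equiv k-1\pmod{\omega_{\mathfrak{z}}}$. For the remaining claim, apply $\xi_{2-2k}$ to \eqref{elliptic}: the holomorphic sum is annihilated, and --- using $\partial_{\overline z}r_{\mathfrak{z}}^2=X_{\mathfrak{z}}\,\overline{\partial_zX_{\mathfrak{z}}}$ and the two identities above --- a short computation shows that $\xi_{2-2k}$ sends both $(z-\overline{\mathfrak{z}})^{2k-2}\beta(r_{\mathfrak{z}}^2;-n,2k-1)X_{\mathfrak{z}}^n$ and $(z-\overline{\mathfrak{z}})^{2k-2}\beta(1-r_{\mathfrak{z}}^2;2k-1,-n)X_{\mathfrak{z}}^n$ to the same nonzero constant $c$, up to sign, times $(z-\overline{\mathfrak{z}})^{-2k}X_{\mathfrak{z}}^{-n-1}$. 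Hence, writing $\xi_{2-2k}(F)$ near $\mathfrak{z}$ in its weight $2k$ elliptic expansion $(z-\overline{\mathfrak{z}})^{-2k}\sum_m\widetilde{c}_mX_{\mathfrak{z}}^m$, one has $\widetilde{c}_{-n-1}=\pm c\,\overline{b_n}$ for $0\le n\le n_0$, so the negative-index (principal) part of $\xi_{2-2k}(F)$ at $\mathfrak{z}$ encodes precisely the coefficients of the middle sum of \eqref{elliptic}. If $F\in\mathcal{H}_{2-2k}^{\operatorname{cusp}}(N)$ then $\xi_{2-2k}(F)$ is a cusp form, hence holomorphic at $\mathfrak{z}$, so $\widetilde{c}_m=0$ for all $m<0$; therefore $b_n=0$ for $0\le n\le n_0$ and the middle sum vanishes.

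The main obstacle is the computation in part (1): turning $\Delta_{2-2k}F=0$ into the separated hypergeometric ODE after the substitution, and then identifying its non-constant solution on the correct branch --- $\beta(r_{\mathfrak{z}}^2;\cdot,\cdot)$ versus $\beta(1-r_{\mathfrak{z}}^2;\cdot,\cdot)$ --- including the exceptional index $n=0$ and the index ranges forced by the boundedness hypothesis. Once this normal form is available, everything else (linear independence and hence uniqueness of the expansion, the $\gamma$-action in part (2), and the $\xi_{2-2k}$-identities) is routine; the only points needing a little care are the justification of termwise application of $\xi_{2-2k}$ and of convergence on the full neighbourhood, which both follow from the real-analyticity of $F$ there.
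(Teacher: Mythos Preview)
Your proposal is correct and follows essentially the same route as the paper. For part (1) the paper simply cites Hejhal's Proposition 4.3, which is precisely the separation-of-variables/hypergeometric argument you sketch; for part (2) the paper likewise uses the stabilizer action on $X_{\mathfrak{z}}$ (quoting Petersson for the identities $X_{\mathfrak{z}}(Ez)=e^{2\pi i/\omega_{\mathfrak{z}}}X_{\mathfrak{z}}(z)$ and $j(E,\mathfrak{z})=e^{-\pi i/\omega_{\mathfrak{z}}}$ that you derive directly) and then applies $\xi_{2-2k}$, obtaining $\xi_{2-2k}(F)=-(4\mathfrak{z}_2)^{2k-1}(z-\overline{\mathfrak{z}})^{-2k}\sum_{n\le n_0}\overline{b_n}X_{\mathfrak{z}}^{-n-1}$, from which $b_n=0$ for $n\ge 0$ follows exactly as you argue.
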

\begin{proof}
(1)  The claim follows precisely as in work of Hejhal, who computed the parabolic expansions of eigenfunctions under a differential operator closely related to the hyperbolic Laplacian in Proposition 4.3 of \cite{Hejhal}. 

\noindent

\noindent
(2)  The stabilizer group $\Gamma_\mathfrak{z}\subseteq \Gamma_0(N)$ of $\mathfrak{z}$ is cyclic and we denote by $E$ one of its generators.  In (2a.15) of \cite{Pe1}, Petersson showed that
$$
X_{\mathfrak{z}}(Ez)= e^{\frac{2\pi i}{\omega_{\mathfrak{z}}}} X_{\mathfrak{z}}(z).
$$
In particular, $r_{\mathfrak{z}}(z)
$ is invariant under $\Gamma_{\mathfrak{z}}$ and modularity of $g$ together with uniqueness of expansions in $e^{in\theta}$ implies that, for $c_n=a_n$ or $c_n=b_n$,
\begin{equation}\label{eqn:cn}
c_n(z-\overline{\mathfrak{z}})^{2k-2}=c_n e^{\frac{2\pi i n}{\omega_{\mathfrak{z}}}} j(E,z)^{2k-2}(Ez-\overline{\mathfrak{z}})^{2k-2}.
\end{equation}
Moreover, using $E\mathfrak{z}=\mathfrak{z}$, we have
$$
j(E,z)^{2k-2}(Ez-\overline{\mathfrak{z}})^{2k-2}= j\left(E,\mathfrak{z}\right)^{2k-2}\left(z-\overline{\mathfrak{z}}\right)^{2k-2}.
$$
Then, by (26) of \cite{Pe2}, we have $j\left(M,\mathfrak{z}\right)=e^{-\frac{\pi i}{\omega_{\mathfrak{z}}}}$, and thus
$$
c_n=c_n e^{\frac{2\pi i \left(n+1-k\right)}{\omega_{\mathfrak{z}}}}.
$$
Hence \eqref{eqn:cn} holds if and only if $c_n=0$ or $n\equiv k-1 \pmod{\omega_{\mathfrak{z}}}$, yielding the first statement in (2).

To conclude the second statement in (2), we apply $\xi_{2-2k}$ to \eqref{elliptic}.  Since
$$
\xi_{2-2k}(F(z))=-\left(4\mathfrak{z}_2\right)^{2k-1}(z-\overline{\mathfrak{z}})^{-2k} \sum_{n\leq n_0}\overline{b_n} X_{\mathfrak{z}}^{-n-1}(z)
$$
is a cusp form, we require that $b_n=0$ for $n\geq 0$.

\end{proof}

\section{Weight zero polar harmonic Maass Poincar\'{e} series}\label{sec:analcont}
In this section, we define a family of Poincar\'e series 
$\mathcal{P}_{N,s}$ via the Hecke trick and analytically continue them to $s=0$.  We follow an argument of Petersson \cite{Pe},
 who analytically continued certain cuspidal elliptic Poincar\'e series.  However, technical difficulties arise because the Poincar\'e series $\mathcal{P}_{N,s}$ have poles in $\mathbb{H}$.  We show in Lemma \ref{lem:xiPsi} that the analytic continuations $z\mapsto y\Psi_{2,N}(\mathfrak{z},z)$ of $\mathcal{P}_{N,s}$ to $s=0$ are elements of $\H_{0}^{\operatorname{cusp}}(N)$.  Applying certain differential operators in the $\mathfrak{z}$ variable, 
we construct a family of functions $
z\mapsto \mathcal{Y}_{0,m,N}(\mathfrak{z},z)
\in\H_{0}^{\operatorname{cusp}}(N)$ with 
principal parts $X_{\mathfrak{z}}^m(z)$ for $m\in-\N$.  
In Proposition \ref{prop:basis} 
we then prove that these functions, together with constant functions and the harmonic Maass form Poincar\'e series $\mathcal{P}_{0,n,N}^{\varrho}$ (with $n<0$), generate $\mathcal{H}_{0}^{\operatorname{cusp}}(N)$.

\subsection{Construction of the Poincar\'e series and their analytic continuations}
Define
\begin{align*}
\mathcal{P}_{N,s}(\mathfrak{z},z)&:=\sum_{M\in \Gamma_0(N)} \frac{\varphi_s(M\mathfrak{z},z)}{j(M,\mathfrak{z})^2|j(M,\mathfrak{z})|^{2s}}\qquad\text{ with }\\
\varphi_s(\mathfrak{z},z)&:=y^{1+s}(\mathfrak{z}-z)^{-1}(\mathfrak{z}-\overline{z})^{-1}|\mathfrak{z}-\overline{z}|^{-2s}.
\end{align*}
The goal is to analytically continue $\mathcal{P}_{N,s}$ to $s=0$ and show that this continuation is $\Gamma_0(N)$-invariant, as a function of $z$.  Note that the analytic continuation $\mathcal{P}_{N,0}$, if it exists, is the weight $2$ analogue (as a function of $\mathfrak{z}$) of Petersson's elliptic Poincar\'e series
\[
\Psi_{2k,N,\nu_{2k}}(\mathfrak{z}, z):=\sum_{M\in\Gamma_0(N)}\nu_{2k}(M)^{-1}j(M, \mathfrak{z})^{-2k}\left(M\mathfrak{z}-\overline{z}\right)^{1-2k}(M\mathfrak{z}-z)^{-1},
\]
where $\nu_{2k}$ is any multiplier system on $\Gamma_0(N)$ which is consistent with weight $2k$ modularity.  This function converges absolutely uniformly for $k>1$ with $\mathfrak{z},z$ in compact sets in which $M\mathfrak{z}\neq z$ for any $M\in\Gamma_0(N)$
(see Sections 1 and 2 of \cite{PeBereich}).  In particular, the absolute convergence of $\mathcal{P}_{N,s}$ follows from Petersson's work  by majorizing by the absolute values in $\Psi_{2+\sigma,
N,\nu_{2+\sigma}}$ for $\sigma=\mathrm{Re}(s)>0$.

We next consider modularity properties in the region of absolute convergence. A direct calculation shows that for, $L\in\Gamma_0 (N)$,
$$
\mathcal{P}_{N,s}(\mathfrak{z},Lz)=\mathcal{P}_{N,s}(\mathfrak{z}, z).
$$
In particular,  the analytic continuation to $s=0$, if it exists, satisfies
\[
\mathcal{P}_{N,0}(\mathfrak{z}, Lz)=\mathcal{P}_{N,0}(\mathfrak{z}, z),
\]
and hence is $\Gamma_0(N)$-invariant.  In order to study further properties of the resulting function, the goal of this section is to analytically continue $\mathcal{P}_{N,s}$ to $s\in\C$ with $\sigma >-1/4$.  To explicitly state the result, we require the Riemann zeta function $\zeta$, the Euler totient function  $\phi$, and the standard \textit{Kloosterman sums}
\begin{displaymath}\label{eqn:Kloosterman}
K(m,n;c):=\sum\limits_{\substack{a,d\pmod{c}
 \\ ad \equiv 1\pmod{c}}}e^{\frac{2\pi i}{c}\left(na+md\right)}.
\end{displaymath}
\begin{theorem}\label{thm:analcont}
The function
\begin{multline*}
2\sum_{n\in\Z} \varphi_s(\mathfrak{z}+n,z)+2\sum_{\substack{M=\left(\begin{smallmatrix}a&b\\c&d\end{smallmatrix}\right)\in \Gamma_0(N)\\ c\geq 1}} \frac{\varphi_s
(M\mathfrak{z},z)-\varphi_s\left(\frac{a}{c},z\right)
}{j(M,\mathfrak{z})^2|j(M,\mathfrak{z})|^{2s}}\\
+2\sum_{n\in\Z}\int_{\R}\varphi_s(t,z)e^{-2\pi i nt}dt\sum_{\substack{m\in\Z\\(m,n)\neq (0,0)}}\int_{\R} \left(\mathfrak{z}+w\right)^{-2-s}\left(\overline{\mathfrak{z}}+w\right)^{-s}e^{-2\pi i mw} dw \sum_{\substack{c\geq 1\\ N|c}} \frac{K(m,n;c)}{ c^{2+2s}}\\
-
\frac{2\sqrt{\pi}}{1+s}\frac{\Gamma\left(\frac{1}{2}+s\right)}{\Gamma\left(1+s\right)}\mathfrak{z}_2^{-1-2s}\cdot s\frac{\zeta(2s+1)}{\zeta(2s+2)}
\frac{\phi(N)}{N^{2+2s}}\prod_{p|N}\frac{1}{1-p^{-2-2s}}\int_{\R}\varphi_s\left(t,z\right)dt
\end{multline*}
provides the analytic continuation of $\mathcal{P}_{N,s}$ to $\sigma>-1/4$ for every $\mathfrak{z},z\in\H$ such that $M\mathfrak{z}=z$ has no solutions in $\Gamma_0(N)$.
\end{theorem}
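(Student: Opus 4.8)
The plan is to follow the classical Hecke-trick / Fourier-expansion method, as in Petersson's argument in \cite{Pe}: work in the half-plane $\sigma:=\operatorname{Re}(s)>0$, where $\mathcal{P}_{N,s}$ converges absolutely, rewrite it there as the sum of the four displayed pieces, and observe that each piece is individually absolutely convergent (resp., for the last piece, meromorphic with no pole) on $\sigma>-1/4$; since the resulting identity holds on $\sigma>0$, uniqueness of analytic continuation then gives the theorem. First I would use that $M$ and $-M$ contribute the same term (so $-I\in\Gamma_0(N)$ produces the overall factor $2$) and split the sum over $M=\left(\begin{smallmatrix}a&b\\c&d\end{smallmatrix}\right)\in\Gamma_0(N)$ according to whether $c=0$ or $c\geq 1$. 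The $c=0$ matrices are the translations $\mathfrak{z}\mapsto\mathfrak{z}+n$ with $j(M,\mathfrak{z})=1$, contributing $2\sum_{n\in\Z}\varphi_s(\mathfrak{z}+n,z)$; since $\varphi_s(\mathfrak{z}+n,z)=O(|n|^{-2-2\sigma})$, this series converges absolutely already for $\sigma>-1/2$, so it needs no further treatment and is the first term.

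For the $c\geq1$ part I would perform a Bruhat-type decomposition with respect to $\Gamma_\infty=\operatorname{Stab}(i\infty)$ acting on both sides: writing $M=\left(\begin{smallmatrix}1&n\\0&1\end{smallmatrix}\right)M_0\left(\begin{smallmatrix}1&m\\0&1\end{smallmatrix}\right)$, the double-coset representatives $M_0$ are parametrized by $c\geq1$ with $N\mid c$ together with $a,d\pmod c$ with $ad\equiv1\pmod c$, and then $M\mathfrak{z}=n+\tfrac ac-\tfrac1{c\,j(M,\mathfrak{z})}$ with $j(M,\mathfrak{z})=c(\mathfrak{z}+m)+d$. The regularization step is to add and subtract, inside this part of the sum, the ``value at the cusp'' $\varphi_s(M\infty,z)=\varphi_s(\tfrac ac,z)$. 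Using $M\mathfrak{z}-M\infty=-\tfrac1{c\,j(M,\mathfrak{z})}$ and a mean-value estimate for $\varphi_s(\cdot,z)$ along the segment joining $M\infty\in\R$ to $M\mathfrak{z}\in\H$ — this is where the hypothesis $z\in\H$ matters, since it keeps the singularities of $\varphi_s(\cdot,z)$, located at $z$ and $\overline z$, away from $\R$; for all but finitely many $M$ the segment stays far from $z,\overline z$ because $\operatorname{Im}(M\mathfrak{z})=\mathfrak{z}_2|j(M,\mathfrak{z})|^{-2}\to0$ and $\Gamma_0(N)\mathfrak{z}$ is discrete, while the remaining finitely many terms are bounded since $M\mathfrak{z}\neq z$ — one gets $|\varphi_s(M\mathfrak{z},z)-\varphi_s(\tfrac ac,z)|\ll (c|j(M,\mathfrak{z})|)^{-1}(1+|n|)^{-3-2\sigma}$, so the difference sum $2\sum_{M,\,c\geq1}\tfrac{\varphi_s(M\mathfrak{z},z)-\varphi_s(a/c,z)}{j(M,\mathfrak{z})^2|j(M,\mathfrak{z})|^{2s}}$ converges absolutely for $\sigma>-1$. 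This is the second term.

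It remains to handle the leftover ``cusp-value'' sum $2\sum_{M,\,c\geq1}\tfrac{\varphi_s(a/c,z)}{j(M,\mathfrak{z})^2|j(M,\mathfrak{z})|^{2s}}$, which in the Bruhat parametrization is $2\sum_{N\mid c}\sum_{ad\equiv1(c)}\sum_{n,m\in\Z}\varphi_s(n+\tfrac ac,z)(c(\mathfrak{z}+m)+d)^{-2-s}(c(\overline{\mathfrak{z}}+m)+d)^{-s}$. Here I would apply Poisson summation in $n$ to the numerator and in $m$ to the denominator, getting $\sum_n e^{2\pi i na/c}\int_\R\varphi_s(t,z)e^{-2\pi int}\,dt$ and $c^{-2-2s}\sum_m e^{2\pi imd/c}\int_\R(\mathfrak{z}+w)^{-2-s}(\overline{\mathfrak{z}}+w)^{-s}e^{-2\pi imw}\,dw$; the remaining sum over $a,d\pmod c$ then collapses to the Kloosterman sum $K(m,n;c)$. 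Separating the frequency $(m,n)=(0,0)$ yields the third and fourth terms. For $(m,n)\neq(0,0)$, the Weil bound $|K(m,n;c)|\ll_\varepsilon c^{1/2+\varepsilon}\gcd(m,n,c)^{1/2}$ makes $\sum_{N\mid c}K(m,n;c)c^{-2-2s}$ converge absolutely precisely for $\sigma>-1/4$ — this is the step that pins the range — while the two integral transforms decay exponentially in $|n|$ and $|m|$ (shift the contour past the singularities at $z,\overline z$, resp. at $-\mathfrak{z},-\overline{\mathfrak{z}}$), so the whole triple sum converges for $\sigma>-1/4$. For the $(0,0)$ term one has $K(0,0;c)=\phi(c)$, so $\sum_{N\mid c}\phi(c)c^{-2-2s}$ equals $\tfrac{\zeta(2s+1)}{\zeta(2s+2)}$ times the explicit finite Euler product in the statement and continues meromorphically with a single simple pole at $s=0$; but the elementary beta integral $\int_\R(\mathfrak{z}+w)^{-2-s}(\overline{\mathfrak{z}}+w)^{-s}\,dw$ equals a constant times $s\,\Gamma(\tfrac12+s)\Gamma(1+s)^{-1}(1+s)^{-1}\mathfrak{z}_2^{-1-2s}$, and its factor $s$ cancels that pole — this is the Hecke trick — producing the holomorphic fourth term.

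Finally I would reassemble the four pieces and conclude by uniqueness of analytic continuation. The main obstacle, beyond the bookkeeping of the Bruhat decomposition and the congruence $N\mid c$, is that $\varphi_s(\cdot,z)$ has singularities in $\H$ (unlike the cuspidal Poincaré series treated by Petersson in \cite{Pe}): this forces one to argue that the subtracted cusp values are legitimate, that only finitely many translates $M\mathfrak{z}$ come near $z,\overline z$, and that the two Fourier integrals decay via contour shifts rather than by Schwartz-type estimates. Together with the Weil bound on Kloosterman sums, this is both what makes the method go through and what limits the analytic continuation to $\sigma>-1/4$.
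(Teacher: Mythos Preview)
Your proposal is correct and follows essentially the same approach as the paper: the decomposition into the $c=0$ translates, the regularized $c\geq 1$ sum, and the cusp-value piece handled by double Poisson summation, Kloosterman sums, the Weil bound, and the Hecke trick on the $(m,n)=(0,0)$ term is exactly the paper's splitting $\sum_1+\sum_2+\sum_3$ in Lemmas~\ref{lem:sum1}, \ref{lem:sum2}, and \ref{lem:sum3}. The only substantive difference is in the bound for $\sum_2$: the paper carries out a careful case analysis (splitting on whether $|M^*\mathfrak{z}/w|<1/2$ and isolating a finite exceptional set $\mathcal{T}$) to obtain convergence for $\sigma>-1/2$, whereas your mean-value estimate along the segment from $a/c+n$ to $M\mathfrak{z}$ is more compressed and claims $\sigma>-1$; either suffices since only $\sigma>-1/4$ is needed, but you should be aware that making your segment argument uniform in the exceptional $M$'s (where $c|j(M,\mathfrak{z})|$ is small and the segment is long) still requires singling out finitely many $(c,d,m)$ and then summing over $n$ separately, which is precisely what the paper's set $\mathcal{T}$ accomplishes.
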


We begin the proof of Theorem \ref{thm:analcont} by rewriting
$\mathcal{P}_{N,s}$ for $\sigma >0$ as
\begin{multline}\label{eqn:analcont}
2\sum_{n\in\Z} \varphi_s
(\mathfrak{z}+n,z)
+
2
\sum_{\substack{M=\left(\begin{smallmatrix}a&b\\c&d\end{smallmatrix}\right)\in \Gamma_0(N)\\
c\geq 1
}} \frac{\varphi_s
(M\mathfrak{z},z)
-\varphi_s
\left(\frac{a}{c},z\right)
}{j(M,\mathfrak{z})^2|j(M,\mathfrak{z})|^{2s}}\\
+
2
\sum_{\substack{M=\left(\begin{smallmatrix}a&b\\c&d\end{smallmatrix}\right)\in \Gamma_0(N)\\
c\geq 1
}}\frac{\varphi_s
\left(\frac{a}{c},z\right)
}{j(M,\mathfrak{z})^2|j(M,\mathfrak{z})|^{2s}}=:\sum_1 + \sum_2 + \sum_3.
\end{multline}

We break the proof of Theorem \ref{thm:analcont} into Lemmas \ref{lem:sum3}, \ref{lem:sum2}, and \ref{lem:sum1}, in which we obtain the analytic continuation of $\sum_3$, $\sum_2$, and $\sum_1$, respectively.
We show that $\sum_1$ and $\sum_2$ converge absolutely locally uniformly in $s$ if $\sigma>-1/2
$ for any $\mathfrak{z},z$ for which $M\mathfrak{z}=z$ has no solution $M\in\Gamma_0(N)$, which we assume throughout.  Furthermore, we claim that $\sum_3$ converges absolutely locally uniformly for $\sigma>0$ and has an analytic continuation
via its Fourier expansion to $s$ with $\sigma>-1/4$.  To validate reordering, we note that since the overall expression is absolutely locally uniformly convergent for $\sigma>0$, $\sum_3$ is absolutely locally uniformly convergent if
both
$\sum_1$ and $\sum_2$
converge absolutely locally uniformly.  We prove this convergence for $\sum_2$ in Lemma \ref{lem:sum2} and for $\sum_1$ in Lemma \ref{lem:sum1}.

\subsection{Analytic continuation of $\sum_3$}\label{subsec:sigma3}
We begin by analytically continuing $\sum_3$ in \eqref{eqn:analcont}.
\begin{lemma}\label{lem:sum3}
The function
\begin{multline}\label{eqn:rewritemainfinal}
2\sum_{n\in\Z}\int_{\R}\varphi_s
(t,z)
e^{-2\pi i nt}dt\sum_{\substack{m\in\Z\\(m,n)\neq (0,0)}}\int_{\R} \left(\mathfrak{z}+w\right)^{-2-s}\left(\overline{\mathfrak{z}}+w\right)^{-s}e^{-2\pi i mw} dw \sum_{\substack{c\geq 1\\ N|c}} \frac{K(m,n;c)}{c^{2+2s}}\\
-\frac{2\sqrt{\pi}}{1+s}\frac{\Gamma\left(\frac{1}{2}+s\right)}{\Gamma\left(1+s\right)}\mathfrak{z}_2^{-1-2s}\cdot s\frac{\zeta(2s+1)}{\zeta(2s+2)}\frac{
\phi(N)}{N^{2+2s}}\prod_{p|N}\left({1-p^{-2-2s
}}\right)^{-1}\int_{\R}\varphi_s\left(t,z\right)dt
\end{multline}
provides an analytic continuation of ${\sum}_3$ to $\sigma>-1/4$.
\end{lemma}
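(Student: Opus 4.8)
The plan is to exploit that, in ${\sum}_3$, the factor $\varphi_s(a/c,z)$ depends on $M=\left(\begin{smallmatrix}a&b\\c&d\end{smallmatrix}\right)\in\Gamma_0(N)$ only through $a$, while $j(M,\mathfrak{z})=c\mathfrak{z}+d$ depends only on $d$ (with $z$, $\mathfrak{z}$, $c$ fixed). Grouping the terms of ${\sum}_3$ by the lower-left entry $c$ (necessarily a positive multiple of $N$) and writing $a=a_0+ck$, $d=d_0+c\ell$ with $d_0$ running over residues modulo $c$ coprime to $c$, $a_0\equiv d_0^{-1}\pmod{c}$, and $k,\ell\in\Z$ (so that $b$ is then determined), the inner sum factors; since ${\sum}_3$ converges absolutely and locally uniformly for $\sigma>0$, one obtains there
\begin{equation*}
{\sum}_3=2\sum_{\substack{c\geq 1\\ N\mid c}}\ \sum_{\substack{d_0\pmod{c}\\ \gcd(d_0,c)=1}}\left(\sum_{k\in\Z}\varphi_s\!\left(\tfrac{a_0}{c}+k,z\right)\right)\left(\sum_{\ell\in\Z}\frac{1}{(c\mathfrak{z}+d_0+c\ell)^2\left|c\mathfrak{z}+d_0+c\ell\right|^{2s}}\right).
\end{equation*}

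Next I would apply Poisson summation to each of the two inner sums. Because $z,\mathfrak{z}\in\H$, the functions $t\mapsto\varphi_s(t,z)=y^{1+s}|t-z|^{-2-2s}$ and $w\mapsto(\mathfrak{z}+w)^{-2-s}(\overline{\mathfrak{z}}+w)^{-s}$ (which for real $w$ equals $(\mathfrak{z}+w)^{-2}|\mathfrak{z}+w|^{-2s}$) are smooth on $\R$ with every derivative decaying at least as fast as $|t|^{-2-2\sigma}$, so Poisson summation is valid for $\sigma>-1/2$ and the Fourier coefficients decay faster than any power of the dual variable, uniformly for $z,\mathfrak{z}$ and $s$ in compact sets. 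Pulling a factor $c^{-2-2s}$ out of the $\ell$-sum and substituting both Fourier expansions, the sum over $d_0$ becomes the exponential sum that is, by the definition in the excerpt, the Kloosterman sum $K(m,n;c)$; reordering the resulting series (again legitimate for $\sigma>0$) yields
\begin{equation*}
{\sum}_3=2\sum_{n,m\in\Z}\left(\int_{\R}\varphi_s(t,z)e^{-2\pi int}\,dt\right)\left(\int_{\R}(\mathfrak{z}+w)^{-2-s}(\overline{\mathfrak{z}}+w)^{-s}e^{-2\pi imw}\,dw\right)\sum_{\substack{c\geq 1\\ N\mid c}}\frac{K(m,n;c)}{c^{2+2s}}.
\end{equation*}

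I would then isolate the term $(m,n)=(0,0)$. For it, $K(0,0;c)=\phi(c)$, and an elementary Euler-product computation gives $\sum_{N\mid c}\phi(c)c^{-2-2s}=\frac{\zeta(2s+1)}{\zeta(2s+2)}\frac{\phi(N)}{N^{2+2s}}\prod_{p\mid N}(1-p^{-2-2s})^{-1}$, while a standard contour (beta-function) integral, simplified via Legendre's duplication formula, gives $\int_{\R}(\mathfrak{z}+w)^{-2-s}(\overline{\mathfrak{z}}+w)^{-s}\,dw=-\frac{\sqrt{\pi}}{1+s}\frac{\Gamma(1/2+s)}{\Gamma(1+s)}\,s\,\mathfrak{z}_2^{-1-2s}$; multiplying these together reproduces exactly the second line of \eqref{eqn:rewritemainfinal}, which is holomorphic for $\sigma>-1/4$ since the simple pole of $\zeta(2s+1)$ at $s=0$ is cancelled by the explicit factor $s$, $\zeta(2s+2)\ne 0$, and $\Gamma(1/2+s)/((1+s)\Gamma(1+s))$ is holomorphic there. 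For the terms with $(m,n)\ne(0,0)$, Weil's bound $|K(m,n;c)|\ll_{\varepsilon}\gcd(m,n,c)^{1/2}c^{1/2+\varepsilon}$ together with the rapid decay of the two Fourier integrals in $n$ and $m$ shows that the triple sum converges absolutely and locally uniformly in $s$ for $\sigma>-1/4$ (the $c$-sum behaving like $\sum_c c^{1/2+\varepsilon-2-2\sigma}$). Hence \eqref{eqn:rewritemainfinal} is holomorphic for $\sigma>-1/4$ and agrees with ${\sum}_3$ for $\sigma>0$, so it is the asserted analytic continuation.

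The main obstacle will be the convergence bookkeeping in the last step: one must make explicit the decay in $n$ and $m$ of $\int_{\R}\varphi_s(t,z)e^{-2\pi int}\,dt$ and $\int_{\R}(\mathfrak{z}+w)^{-2-s}(\overline{\mathfrak{z}}+w)^{-s}e^{-2\pi imw}\,dw$ with uniform control in $s$, $z$ and $\mathfrak{z}$, and combine it with Weil's bound to push absolute convergence all the way down to $\sigma>-1/4$; one must also justify the Poisson splitting and the interchange of the three summations in the region $\sigma>0$ of absolute convergence before performing the continuation. By contrast, evaluating the $(0,0)$-contribution (the $\phi$-twisted Dirichlet series and the contour integral) is routine.
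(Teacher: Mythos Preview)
Your proposal is correct and follows essentially the same route as the paper: group by $c$, apply Poisson summation in both the $a$- and $d$-directions to produce the Kloosterman sums, separate off the $(m,n)=(0,0)$ term and evaluate it exactly (via the Euler product for $\sum_{N\mid c}\phi(c)c^{-2-2s}$ and the beta-type integral for the $w$-integral), then push the remaining $(m,n)\neq(0,0)$ piece to $\sigma>-1/4$ using Weil's bound together with decay of the two Fourier integrals.

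The one place where the paper is sharper than your sketch is the decay of the Fourier integrals: rather than arguing via smoothness and integration by parts to get ``faster than any power'' decay, the paper shifts the contour of integration to $\im(t)=-\sgn(n)\alpha$ with $0<\alpha<y$ (and similarly $\im(w)=-\sgn(m)\beta$ with $0<\beta<\mathfrak{z}_2$), exploiting that the integrands are analytic in the open strip. This yields the cleaner exponential bounds $\ll_{y,\sigma_0}e^{-2\pi|n|\alpha}$ and $\ll_{\mathfrak{z}_2,\sigma_0}e^{-2\pi|m|\beta}$, uniform for $s$ in compact subsets of $\sigma>-1/2$, which makes the subsequent combination with $|K(m,n;c)|\ll c^{1/2+\varepsilon}(m,n,c)^{1/2}$ entirely straightforward. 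You should adopt this contour-shift in place of the smoothness argument; everything else in your outline matches the paper.
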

\begin{proof}
For $\sigma>0$, we have, using Poisson summation twice,
\begin{multline}\label{eqn:Poisson}
\sum_3=2\sum_{\substack{c\geq 1\\ N|c}}c^{-2-2s}\sum_{n\in\Z}\; \sum_{\substack{ a,d\pmod{c} \\ad\equiv 1\pmod{c}}}\int_{\R} \varphi_s
\left(\frac{a}{c}+t,z\right)
e^{-2\pi in t}dt\\
\times\sum_{m\in\Z}\int_{\R} \left(\mathfrak{z}+\frac{d}{c}+w\right)^{-2-s}\left(\overline{\mathfrak{z}}+\frac{d}{c} +w\right)^{-s}e^{-2\pi imw} dw.
\end{multline}
\indent
We next rewrite the right-hand side of \eqref{eqn:Poisson}.  Shifting $t\mapsto t -a/c$, the integral over $t$ becomes
\begin{equation}\label{eqn:ashift}
e^{\frac{2\pi i na}{c}} \int_{\R}\varphi_s(t,z) e^{-2\pi int}dt= e^{\frac{2\pi i na}{c}} y^{1+s} \int_{\R}
\left(t-z\right)^{-1-s}\left(t-\overline{z}\right)^{-1-s}  e^{-2\pi int}dt.
\end{equation}
Similarly, letting $w\mapsto w-d/c$, the integral over $w$ equals
\begin{equation}\label{eqn:dshift}
e^{\frac{2\pi i md}{c}}\int_{\R} \left(\mathfrak{z}+
w\right)^{-2-s}\left(\overline{\mathfrak{z}}+
w\right)^{-s}e^{-2\pi imw} dw.
\end{equation}
Thus one formally obtains
\begin{equation}\label{rewritemain}
\sum_3=2\sum_{n\in\Z}\int_{\R}\varphi_s
(t,z)
e^{-2\pi i nt}dt\sum_{m\in\Z}\int_{\R} \left(\mathfrak{z}+w\right)^{-2-s}\left(\overline{\mathfrak{z}}+w\right)^{-s
}e^{-2\pi i mw} dw \sum_{\substack{c\geq 1\\ N|c}} \frac{K(m,n;c)}{c^{2+2s}}.
\end{equation}

To validate \eqref{rewritemain}, one needs to verify that the triple sum converges absolutely for $\sigma>0$.
For this, we bound the Kloosterman sums trivially, to estimate the sum over $c$ against
$$
\sum_{c\geq 1} c^{-2-2\sigma}\phi(c)\ll \sum_{c\geq 1} c^{-1-2\sigma}
<\infty.
$$
\indent
It remains to show that the double sum over $n$ and $m$ converges absolutely.  Since the integrands in \eqref{eqn:ashift} and \eqref{eqn:dshift} are analytic in the integration variable for $|\im(t)|<y$ and $|\im(w)|<\mathfrak{z}_2$ respectively, we may shift the path of integration to $\im(t)=-\sgn(n)\alpha$ and $\im(w)=-\sgn(m)\beta$, respectively for any $0<\alpha<y$ and $0<\beta<\mathfrak{z}_2$.  A straightforward change of variables then shows that for any $-1/2<\sigma_0<\sigma$, the absolute value of \eqref{eqn:ashift} may be bounded against
\begin{multline}\label{eqn:intphibnd}
e^{-2\pi|n|\alpha}y^{1+\sigma} \int_{\R} |t-i(y+\sgn(n)\alpha)|^{-1-\sigma}|t+i(y-\sgn(n)\alpha)|^{
-1-\sigma
} dt
 \\
\leq e^{-2\pi|n|\alpha}\int_\R |t|^{-1-\sigma_0}  \left(\frac{t^2}{y^2}+1\right)^{
-\frac{1+\sigma_0}{2}
}dt\ll_{y,\sigma_0}e^{-2\pi |n|\alpha},
\end{multline}
while
\begin{equation}\label{eqn:jintbnd}
\left|\int_{\R} \left(\mathfrak{z}+w\right)^{-2-s}\left(\overline{\mathfrak{z}}+w\right)^{-s}e^{-2\pi i mw} dw\right|
\ll_{\mathfrak{z}_2,\sigma_0}e^{-2\pi|m|\beta}.
\end{equation}
This validates \eqref{rewritemain}.

We next split the $m=n=0$ term in \eqref{rewritemain} off and show that the remaining terms converge absolutely locally uniformly in $s$ for $-1/4<\sigma_0<\sigma$.
For this we require a well-known result of Weil, which implies that for, any $\varepsilon >0$,
\begin{equation}\label{eqn:Kbnd}
|K(m,n;c)|\leq 
\tau(c)
 c^{\frac{1}{2}}(m,n,c)^{\frac{1}{2}}\ll c^{\frac{1}{2}+\varepsilon}(m,n,c)^{\frac{1}{2}},
\end{equation}
where $\tau(c)
$ is the number of divisors of $c$.   Combining \eqref{eqn:Kbnd} with \eqref{eqn:intphibnd} and \eqref{eqn:jintbnd}, the terms in \eqref{rewritemain} with $(m,n)\neq (0,0)$ may be bounded against
\[
\ll_{\alpha,\beta,\sigma_0,\mathfrak{z}_2,y} \sum_{c\geq 1}c^{-\frac{3}{2}+\varepsilon-2\sigma_0} \sum_{n\in\Z}e^{-2\pi|n|\alpha}\sum_{m\in\Z} \left(\sqrt{|m|}+\sqrt{|n|}\right)e^{-2\pi|m|
\beta
}.
\]
Hence all sums converge absolutely uniformly in $s$ for $-1/4<\sigma_0<\sigma$.

For $m=n=0$, we use (52a) of \cite{Pe4}
(see (40a) of \cite{Pe4} for the definition of $A_0$)
to evaluate
\begin{equation}\label{eqn:Petint}
\int_{\R} (\mathfrak{z}+w)^{-2-s}(\overline{\mathfrak{z}}+w)^{-s}dw =
-\frac{\sqrt{\pi}}{1+s}\frac{\Gamma\left(\frac{1}{2}+s\right)}{\Gamma\left(1+s
\right)} s \mathfrak{z}_2^{-1-2s}.
\end{equation}
Moreover the sum over $c$ equals in this case $F(N, 2+2s)$, where
\begin{equation}\label{eqn:Fdef}
F(N,s):=\sum\limits_{\substack{n\geq 1\\ N|n}} \frac{\phi(n)}{n^s}=\frac{\phi(N)}{N^s}\sum_{n\geq 1}\frac{\phi(Nn)}{\phi(N)}n^{-s}.
\end{equation}
Using that $\phi(Nn)/\phi(N)$ is multiplicative and comparing Euler factors on both sides gives that
\begin{equation}\label{evaluateF}
F(N,s)=\frac{\phi(N)}{N^s}\prod_{p|N}\left(1-p^{-s}\right)^{-1}
\frac{\zeta(s-1)}{\zeta(s)}.
\end{equation}
Thus
\begin{equation}\label{sumcN}
\sum_{\substack{c\geq 1\\ N|c}} \frac{\phi(c)}{c^{2+2s}}=\frac{\phi(N)}{N^{2s+2}}\prod_{p|N}\left({1-p^{-2-2s}}\right)^{-1}\frac{\zeta(2s+1)}{\zeta(2s+2)}.
\end{equation}
Plugging \eqref{eqn:Petint} and \eqref{sumcN}
into
 \eqref{eqn:Poisson} for the $m=n=0$ term,
 we thus obtain that $\sum_3$ equals \eqref{eqn:rewritemainfinal}.

It remains to show that the $m=n=0$ term is indeed analytic in $s$ for $\sigma>-1/2$.  Since $\zeta(2s+2)$ does not vanish for $\sigma>-1/2$ and $\zeta(2s+1)$ only has a simple pole at $s=0$, the function $s\frac{\zeta(2s+1)}{\zeta(2s+2)}$ is analytic for $\sigma>-1/2$. The finite factor in \eqref{sumcN} is clearly analytic away from a pole at $s=-1$ and hence in particular analytic for $\sigma>-1/2$.  Similarly, the ratio of the gamma factors in \eqref{eqn:Petint} is analytic for $s$ if $\sigma>-1/2$ and $\mathfrak{z}_2^{-1-2s}$ is analytic for $s\in\C$.  It thus remains to show that $\int_{\R}\varphi_s
(t,z) dt$ is analytic in $s$.  Since $s\mapsto \varphi_{s}(t,z)$ is clearly analytic, it suffices to bound the integrand locally uniformly.  For this, we shift $t\mapsto ty+\mathrm{Re}(z)$, to obtain
$$
\int_{\mathbb{R}} \left| \varphi_s
\left( t,z\right)
 \right| dt = y^{1+\sigma}\int_{\R}|t-{z}|^{-2-2\sigma}dt=y^{-\sigma}\int_\R\left(t^2+1\right)^{-1-\sigma}dt.
$$
Assuming $-1/2<\sigma_0<\sigma<\sigma_1$ this is less than
\[
\left(y^{-\sigma_0}+y^{-\sigma_1}\right) \int_\R \left(t^2+1\right)^{-1-\sigma_0}dt.
\]
This verifies that the last term in \eqref{eqn:rewritemainfinal} is analytic for $-1/2<\sigma_0<\sigma<\sigma_1$, finishing the proof.
\end{proof}

\subsection{Analytic continuation of $\sum_2$}
In this subsection we show that $\sum_2$ converges absolutely uniformly inside the
rectangle $\mathcal{R}$ defined by $|\im(s)|\leq R$ and $-1/2<\sigma_0\leq \sigma\leq \sigma_1$.
\begin{lemma}\label{lem:sum2}
If $M\mathfrak{z}=z$ has no solution $M\in\Gamma_0(N)$, then the series $\sum_2$ converges absolutely uniformly in $\mathcal{R}$.
\end{lemma}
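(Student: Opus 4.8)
The plan is to estimate each summand by absolute values and then majorize the resulting series by a convergent one, uniformly in $s$ on the rectangle $\mathcal{R}$. The key point is that the individual term in $\sum_2$ is the \emph{difference} $\varphi_s(M\mathfrak{z},z)-\varphi_s(a/c,z)$, so one gains an extra power of $c$ over the naive bound coming from the mean-value theorem: as $c\to\infty$, the point $M\mathfrak{z}=\frac{a}{c}-\frac{1}{c(c\mathfrak{z}+d)}$ approaches the real point $a/c$, with displacement of size $O(1/c^2)$. First I would write $M\mathfrak{z}-z = \left(\tfrac{a}{c}-z\right) - \tfrac{1}{c(c\mathfrak{z}+d)}$ and similarly for $M\mathfrak{z}-\overline{z}$, and expand $\varphi_s(M\mathfrak{z},z)$ around $\varphi_s(a/c,z)$; since $\varphi_s(\cdot,z)$ (as a function of its first argument, with the argument's imaginary part being $\im(M\mathfrak{z})=\mathfrak{z}_2/|c\mathfrak{z}+d|^2$) has a zero of the factor $y^{1+s}$ replaced appropriately, one checks $\varphi_s(M\mathfrak{z},z)$ itself is of size $\ll \mathfrak{z}_2^{1+\sigma}|c\mathfrak{z}+d|^{-2-2\sigma}$ times a bounded factor depending on $\operatorname{dist}(a/c,\{z,\overline z\})$, and the difference is smaller by a factor $O(1/(c|c\mathfrak{z}+d|))$ once $|a/c-z|$ and $|a/c-\overline z|$ are bounded below.

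The main obstacle is exactly controlling the factor $|a/c - z|^{-1}|a/c-\overline z|^{-1}$ (and its shifted analogue involving $M\mathfrak z$), which can be small when $a/c$ is close to $\re(z)$. Here I would use the hypothesis $M\mathfrak{z}\ne z$ for all $M$: since $z\in\H$ has $\im(z)=y>0$ fixed, one has $|a/c-z|\ge y$ and $|a/c-\overline z|\ge y$ automatically (the real point $a/c$ is at distance $\ge y$ from $z$), so in fact the denominators in $\varphi_s(a/c,z)$ are bounded below \emph{unconditionally} by $y^2$, and likewise $|M\mathfrak{z}-z|\ge y/2$ and $|M\mathfrak{z}-\overline z|\ge y/2$ for $c$ large (since $\im(M\mathfrak z)\to 0$, the point $M\mathfrak z$ is eventually within $y/2$ of the real axis, hence at distance $\ge y/2$ from $z$); only finitely many small $c$ remain, and for those the hypothesis $M\mathfrak z\ne z$ guarantees each of the finitely many terms is finite. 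Thus the non-meromorphic subtlety Petersson faced does not actually arise for the $z$-variable denominators — the genuine poles are all in the first variable $\mathfrak z$, which is held fixed and distinct from every $M^{-1}z$.

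With these lower bounds in hand, the term with matrix $M=\left(\begin{smallmatrix}a&b\\c&d\end{smallmatrix}\right)$, $c\ge 1$, is bounded (uniformly for $s\in\mathcal R$, using $-1/2<\sigma_0\le\sigma\le\sigma_1$ to control $\mathfrak{z}_2^{1+\sigma}$ and $y^{\pm(2+2\sigma)}$ by their values at the endpoints) by
\[
\ll_{\mathfrak z,z,\mathcal R}\ \frac{1}{|c\mathfrak z+d|}\cdot\frac{1}{|c\mathfrak z+d|^{2+2\sigma_0}}\cdot\frac{1}{c}\ \ll\ \frac{1}{c^{2}\,|c\mathfrak z+d|^{3+2\sigma_0}},
\]
after absorbing the factor $|j(M,\mathfrak z)|^{-2-2s}$ from the weight into $|c\mathfrak z+d|^{-2-2\sigma}$. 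Summing over $d\in\Z$ for fixed $c$ gives $\sum_{d\in\Z}|c\mathfrak z+d|^{-3-2\sigma_0}\ll_{\mathfrak z_2} c^{-1}$ (standard, since $3+2\sigma_0>2$ when $\sigma_0>-1/2$), and summing the remaining $\sum_{c\ge1} c^{-3}$ converges. Finally I would note the sum over $b$ is a non-issue since $b$ is determined modulo $c$ by $ad-bc=1$ once $a,c$ are fixed — one sums over $\Gamma_\infty\backslash\Gamma_0(N)$ cosets, i.e.\ over coprime pairs $(c,d)$ — so the whole double series is dominated by $\sum_{c\ge1}\sum_{d\in\Z}c^{-2}|c\mathfrak z+d|^{-3-2\sigma_0}<\infty$, locally uniformly in $\mathcal R$. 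This establishes absolute and uniform convergence of $\sum_2$ on $\mathcal R$, and in particular (taking $\sigma_0$ close to $-1/2$) on a neighborhood of $\sigma>-1/2$.
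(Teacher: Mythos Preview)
There is a genuine gap. The sum $\sum_2$ runs over \emph{all} $M=\left(\begin{smallmatrix}a&b\\c&d\end{smallmatrix}\right)\in\Gamma_0(N)$ with $c\geq 1$, not over $\Gamma_{\infty}\backslash\Gamma_0(N)$. For a fixed bottom row $(c,d)$ there are infinitely many such matrices, namely $T^nM_0$ for $n\in\Z$; under $M\mapsto T^nM$ the quantity $a/c$ becomes $a/c+n$ while $j(M,\mathfrak{z})=c\mathfrak{z}+d$ stays the same. Your mean-value estimate bounds the difference $\varphi_s(M\mathfrak{z},z)-\varphi_s(a/c,z)$ using only the crude lower bounds $|a/c-z|\geq y$ and $|a/c-\overline{z}|\geq y$, which discards all dependence on $a/c$ (equivalently on $n$). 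The resulting term bound
\[
\ll \frac{1}{c\,|c\mathfrak{z}+d|^{3+2\sigma_0}}
\]
is therefore constant in $n$, and summing it over $n\in\Z$ diverges. Your final sentence (``the sum over $b$ is a non-issue \ldots\ one sums over $\Gamma_\infty\backslash\Gamma_0(N)$ cosets'') is precisely where the error crystallizes: it is the sum over $a$ (equivalently over $n$), not over $b$, that is infinite for fixed $(c,d)$.

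The paper's proof fixes this by writing $M=T^nM'$ with $|a'/c|\leq 1/2$, setting $w:=a'/c+n-\overline{z}$, and splitting the summand algebraically (equations \eqref{eqn:Mwsplit2}--\eqref{eqn:Mwsplit3}) so that the bound retains a factor $|w|^{-2\sigma-3}$; since $|w|\asymp |n|$ for $|n|$ large, this gives a convergent sum $\sum_n(|n|+1)^{-2\sigma-3}$ for $\sigma>-1$. Only finitely many pairs $(M',n)$ fail the smallness condition $|M^*\mathfrak{z}/w|<1/2$ needed for this estimate, and those are handled separately using the hypothesis $M\mathfrak{z}\neq z$. To repair your argument you would need to keep the factor $|a/c-z|^{-1}|a/c-\overline{z}|^{-1-2\sigma}$ (or at least a power of $|a/c-z|$) in the bound rather than replacing it by $y^{-2-2\sigma}$, so that the $n$-sum converges.
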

Before proving Lemma \ref{lem:sum2}, we show a technical lemma which proves useful later.
\begin{lemma}\label{lem:Wbnd}
Let $\mathcal{R}_0$ be the rectangle defined by $|\im(s)|\leq R$ and $-
1/2< \sigma_0\leq \sigma\leq \sigma_1$.  Then, for every $|W|<1/2$, we have
\begin{align}\label{eqn:Wbnd1}
\left| |W+1|^{-2s}-1\right| &\ll_{\mathcal{R}_0} |W|,\\
\label{eqn:Wbnd2}
\left|(W+1)^{-2s}\right| &\ll_{\mathcal{R}_0} 1.
\end{align}
\end{lemma}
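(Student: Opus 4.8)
The plan is to reduce both bounds to the elementary inequality $|e^{\zeta}-1|\le |\zeta|e^{|\zeta|}$ together with the fact that $|s|$ stays bounded on $\mathcal{R}_0$; throughout write $C_1=C_1(\mathcal{R}_0):=\sup_{s\in\mathcal{R}_0}|s|<\infty$. The single input that makes both estimates clean is the \emph{uniform} (not merely infinitesimal) bound
\[
\bigl|\log|W+1|\bigr|\le 2|W| \qquad \text{for } |W|<1/2,
\]
where $\log$ denotes the real natural logarithm. This follows from $1-|W|\le |W+1|\le 1+|W|$ together with $\log(1+x)\le x$ and $-\log(1-x)\le x/(1-x)\le 2x$ for $0\le x\le 1/2$; crucially it holds for \emph{all} $|W|<1/2$, so no case distinction on the size of $W$ is needed.

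For \eqref{eqn:Wbnd1}, I would first note that $|W+1|$ is a positive real number, so that $|W+1|^{-2s}=e^{-2s\log|W+1|}$ with the real logarithm. Put $\zeta:=-2s\log|W+1|$. The uniform bound above gives $|\zeta|\le 4C_1|W|\le 2C_1$, and hence
\[
\bigl||W+1|^{-2s}-1\bigr|=\bigl|e^{\zeta}-1\bigr|\le |\zeta|e^{|\zeta|}\le \bigl(4C_1e^{2C_1}\bigr)|W|,
\]
which is \eqref{eqn:Wbnd1}, with implied constant depending only on $\mathcal{R}_0$.

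For \eqref{eqn:Wbnd2}, since $|W|<1/2$ the point $W+1$ has real part exceeding $1/2$, hence lies in the open right half-plane, where the principal branch $\operatorname{Log}$ is holomorphic; thus $(W+1)^{-2s}=e^{-2s\operatorname{Log}(W+1)}$. Because $|W+1|\in(1/2,3/2)$ and a complex number with positive real part has argument in $(-\pi/2,\pi/2)$, we get $\bigl|\operatorname{Log}(W+1)\bigr|\le \bigl|\log|W+1|\bigr|+\bigl|\operatorname{Arg}(W+1)\bigr|<\log 2+\pi/2$. Consequently
\[
\bigl|(W+1)^{-2s}\bigr|=\exp\bigl(-2\re(s\operatorname{Log}(W+1))\bigr)\le \exp\bigl(2|s|\,|\operatorname{Log}(W+1)|\bigr)\le \exp\bigl(2C_1(\log 2+\pi/2)\bigr),
\]
again a constant depending only on $\mathcal{R}_0$.

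I do not expect any genuine obstacle here; the only point requiring care is to keep the two exponentials conceptually distinct — the \emph{real} logarithm of $|W+1|$ controls \eqref{eqn:Wbnd1}, while the \emph{complex} principal logarithm of $W+1$ controls \eqref{eqn:Wbnd2} — and to record the linear estimate $\bigl|\log|W+1|\bigr|\ll|W|$ in the strong uniform form above, since the later use of this lemma (in the analytic continuation of $\sum_2$) needs it for every $|W|<1/2$, not merely for $W$ near $0$.
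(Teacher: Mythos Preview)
Your argument is correct and slightly cleaner than the paper's. The paper proceeds differently: it writes $W=re^{i\theta}$, sets $f_{\theta}(r):=(1+2r\cos\theta+r^2)^{-s}=|W+1|^{-2s}$, and applies the mean value theorem in the real variable $r$ to get $f_{\theta}(r)=1+f_{\theta}'(c)r$ for some $0<c<r$, then bounds $|f_{\theta}'(c)|\le 3|s|\,2^{2+2\sigma}$ directly. For \eqref{eqn:Wbnd2} the paper observes that $|(W+1)^{-2s}|$ is essentially $f_{\theta,\sigma}(r)$, reducing to the real-$s$ case already handled. Your route via $|e^{\zeta}-1|\le|\zeta|e^{|\zeta|}$ and the uniform estimate $|\log|W+1||\le 2|W|$ is equally elementary and has the advantage that it keeps the two statements conceptually separate (real logarithm for \eqref{eqn:Wbnd1}, principal branch for \eqref{eqn:Wbnd2}); it also makes the dependence on $\sup_{s\in\mathcal{R}_0}|s|$ completely explicit. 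The paper's approach, on the other hand, avoids invoking any logarithm at all for \eqref{eqn:Wbnd1} and gets \eqref{eqn:Wbnd2} almost for free from the same computation. Either method is perfectly adequate here.
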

\begin{proof}
\noindent
To prove \eqref{eqn:Wbnd1}, we  write $W= re^{i\theta}$ and let
$$
f_{\theta}(r)=f_{\theta, s}(r):=\left|re^{i\theta}+1\right|^{-2s} = \left(1+2r\cos(\theta)+r^2
\right)^{-s}.
$$
Since $r< 1/2$, Taylor's Theorem yields that
$$
f_{\theta}(r)=1+ f_{\theta}'(c) r
$$
for some $0<c<r< 1/2$.  But for $0<c<1/2$ and $\sigma>-1$, we have
\[
\left|f_{\theta}'(c)\right|=
|s|
\left|1+2c\cos(\theta) + c^2\right|^{
-\sigma
-1}\left|2\cos(\theta)+2c\right| \leq
3
|s|2^{
2+2\sigma}.
\]
Since inside the rectangle $\mathcal{R}_{0}$, $|s|$ and $\sigma$ are bounded
from above, we can conclude \eqref{eqn:Wbnd1}

To obtain \eqref{eqn:Wbnd2}, we note that the above proof of \eqref{eqn:Wbnd1} implies that
\[
\left\lvert (W+1)^{-2s}\right\rvert =f_{\theta, \sigma}(r)\ll_{\mathcal{R}_0} 1.
\]
\end{proof}

\indent
We next prove Lemma \ref{lem:sum2}.
\begin{proof}[Proof of Lemma \ref{lem:sum2}]

In order to show absolute locally uniform convergence of $\sum_2$, we rewrite $M$ as $T^nM$ with $n\in\Z$ and $M=\left(\begin{smallmatrix} a&b\\c&d\end{smallmatrix}\right)\in \Gamma_{\infty}\backslash \Gamma_0(N)$ such that $|\frac{a}{c}|\leq\frac{1}{2}$.  Abbreviating $w:=\frac{a}{c}+n-\overline{z}$ and $M^*\mathfrak{z}:=M\mathfrak{z}-\frac{a}{c}=-\frac{1}{c(c\mathfrak{z}+d)}$, the terms in the series $\sum_2$ equal 
\begin{multline}\label{eqn:Mwsplit}
\frac{y^{1+s}}{j(M,\mathfrak{z})^2|j(M,\mathfrak{z})|^{2s}\left| M^* \mathfrak{z} +w\right|^{2s}}\left(\frac{1}{(M^*\mathfrak{z} + w) \left( M^* \mathfrak{z} +\overline{w}\right) } - \frac{1}{|w|^2 } \right)\\
 + \frac{y^{1+s}}{j(M,\mathfrak{z})^2|j(M,\mathfrak{z})|^{2s}|w|^2}
 \left( \frac{1}{\left| M^*\mathfrak{z} + w \right|^{2s}} - \frac{1}{|w|^{2s}}\right).
\end{multline}

We next determine the asymptotic growth of \eqref{eqn:Mwsplit} in $|w|$ and $|M^*\mathfrak{z}|$, with constants only depending on $\mathcal{R}$.  For this, we rewrite the first term in \eqref{eqn:Mwsplit} as
\begin{equation}\label{eqn:Mwsplit2}
-\frac{y^{1+s}M^*\mathfrak{z}\left( M^*\mathfrak{z}+2\re(w)\right)}{j(M,\mathfrak{z})^2|j(M,\mathfrak{z})|^{2s}|w|^{2s+4}\left|\frac{M^*\mathfrak{z}}{w}+1\right|^{2s}\left(\frac{M^*\mathfrak{z}}{w}+1\right)\left(\frac{M^*\mathfrak{z}}{\overline{w}}+1\right)}.
\end{equation}
Noting that $|M^*\mathfrak{z}|=\frac{1}{|c|\cdot |c\mathfrak{z}+d|}\leq \frac{1}{\mathfrak{z}_2}$, we estimate
\begin{equation}\label{eqn:numer}
M^*\mathfrak{z}+
2\re(w)
\ll \frac{1}{\mathfrak{z}_2}+|w|.
\end{equation}

We next rewrite the second term in \eqref{eqn:Mwsplit} as
\begin{equation}\label{eqn:Mwsplit3}
\frac{y^{1+s}}{j(M,\mathfrak{z})^2|j(M,\mathfrak{z})|^{2s}|w|^{2s+2}}\left(\left|\frac{M^*\mathfrak{z}}{w}+1\right|^{-2s}-1\right).
\end{equation}
\indent
In order to bound \eqref{eqn:Mwsplit3}, we split the range on $n$
and apply Lemma \ref{lem:Wbnd} for all $n$ with $|n|$ sufficiently large.  In particular,
one can show that if $|n|\geq |z|+1/2+2/\mathfrak{z}_2$, then \eqref{eqn:Wbnd1}
implies that \eqref{eqn:Mwsplit3} can be bounded against
\[
\ll_{\mathcal{R}} \frac{y^{1+\sigma}}{|j(M,\mathfrak{z})|^{2+2\sigma}}\left\lvert M^\ast \mathfrak{z}\right\rvert |w|^{-2\sigma-3}\leq \frac{y^{1+\sigma}}{|j(M,\mathfrak{z})|^{2+2\sigma}}\left\lvert M^\ast \mathfrak{z}\right\rvert\left(|n|-|z|-\frac12\right)^{-2\sigma-3}.
\]
Moreover, for these $n$, \eqref{eqn:numer} can be bounded against $3/2\cdot |w|$.  We then
use \eqref{eqn:Wbnd2}, once with $s$ and twice with $s=1/2$, estimating \eqref{eqn:Mwsplit2} against
\[
\ll\frac{y^{1+\sigma}}{|j(M,\mathfrak{z})|^{2+2\sigma}}\left\lvert M^\ast \mathfrak{z}\right\rvert |w|^{-2\sigma-3}\ll
\frac{y^{1+\sigma}}{|j(M,\mathfrak{z})|^{2+2\sigma}}\left\lvert M^\ast \mathfrak{z}\right\rvert \left(|n|-|z|-\frac12\right)^{-2\sigma-3}.
\]
Using that $|M^*\mathfrak{z}|=|c|^{-1}|j(M,\mathfrak{z})|^{-1}$, the contribution
to $\sum_2$ from $|n|\geq |z| + 1/2+2/\mathfrak{z}_2$ may hence be bounded by
\begin{multline}\label{eqn:nbig}
\ll_{\mathcal{R}}y^{1+\sigma}\sum_{\substack{M\in \Gamma_\infty\backslash \Gamma_0 (N)\\ c\geq 1}} |c|^{-1}|j(M,\mathfrak{z})|^{-3-2\sigma}\sum_{n\geq 1} n^{-3-2\sigma} \\
\leq  \left(y^{1+\sigma_0}+y^{1+\sigma_1}\right)\zeta\left(3+2\sigma_0\right)\sum_{\substack{M\in\Gamma_\infty\backslash \Gamma_0 (N)\\ c\geq 1}} |j(M,\mathfrak{z})|^{-3-2\sigma}.
\end{multline}
The sum on $M$ is
half of
the termwise absolute value of the weight $3+2\sigma$ Eisenstein series without its constant term, and hence converges absolutely uniformly in $\mathfrak{z}$ and $\sigma>\sigma_0>-1/2$.

It remains to bound the terms of $\sum_2$ with $|n|\leq |z|+1/2+2/\mathfrak{z}_2$.  For these, we have
\begin{equation}\label{eqn:wbnd}
y\leq |w|\leq |z|+\frac12+|n|\ll |z|+1+\frac{1}{\mathfrak{z}_2}.
\end{equation}
In particular, if
\begin{equation}\label{eqn:Msmall}
\left|\frac{M^*\mathfrak{z}}{y}\right|\leq \frac{1}{2},
\end{equation}
then Lemma \ref{lem:Wbnd} can be applied.  Thus, by \eqref{eqn:Wbnd2},
\eqref{eqn:numer}, and \eqref{eqn:wbnd}, the absolute value of the terms in \eqref{eqn:Mwsplit2} with $|n|\leq|z|+1/2+2/\mathfrak{z}_2$ which satisfy \eqref{eqn:Msmall} may be bounded by
$$
\ll_{\mathcal{R}} \frac{y^{1+\sigma}\cdot \left|M^*\mathfrak{z}\right|\left(\frac{1}{\mathfrak{z}_2}+|w|\right)}{|j(M,\mathfrak{z})|^{2+2\sigma}\cdot |w|^{4+2\sigma}}\frac{}{}\ll \left(|z|+1+\frac{1}{\mathfrak{z}_2}\right)\frac{y^{-3-\sigma}}{|j(M,\mathfrak{z})|^{2+2\sigma}}\left|M^*\mathfrak{z}\right|.
$$
Similarly, \eqref{eqn:Wbnd1} and \eqref{eqn:wbnd} imply that \eqref{eqn:Mwsplit3} can be estimated against
$$
\ll_{\mathcal{R}} \frac{y^{1+\sigma}\cdot \left|M^*\mathfrak{z}\right|}{|j(M,\mathfrak{z})|^{2+2\sigma}\cdot |w|^{2\sigma+3}}\frac{}{}\ll \frac{y^{-2-\sigma}}{|j(M,\mathfrak{z})|^{2+2\sigma}}
\left|M^*\mathfrak{z}\right|.
$$
Hence the sum over the absolute value of those terms in $\sum_2$ for which $|n|\leq |z|+1/2+2/\mathfrak{z}_2$ and \eqref{eqn:Msmall} is satisfied may be bounded by
\begin{multline}\label{eqn:nsmallMbig}
\ll
\left(1+\frac{1}{y}\right)
y^{-2-\sigma}\left(|z|+1+\frac1{\mathfrak{z}_2}\right)^{2}\sum_{\substack{M\in\Gamma_\infty\backslash \Gamma_0 (N)\\ c\geq 1}} |j(M,\mathfrak{z})|^{-3-2\sigma}\\
\leq \left(1+\frac{1}{y}\right)\left(y^{-2-\sigma_0}+y^{-2-\sigma_1}\right)\left(|z|+1+\frac1{\mathfrak{z}_2}\right)^{2}\sum_{\substack{M\in\Gamma_\infty\backslash \Gamma_0 (N)\\ c\geq 1}} |j(M,\mathfrak{z})|^{-3-2\sigma}.
\end{multline}
Again, the sum over $M$ is a majorant for the weight $3+2\sigma$ Eisenstein series minus its constant term, and hence uniformly converges for $\mathfrak{z}\in\H$ and $\sigma>\sigma_0>-1/2$.

Finally, we consider the contribution from those terms in \eqref{eqn:analcont} with $|n|\ll |z|+1+1/\mathfrak{z}_2$ and $M$ which does not satisfy \eqref{eqn:Msmall},  denoting the set of such $(M,n)$ by $\mathcal{T}$.  We naively bound the contributions of $\mathcal{T}$ to \eqref{eqn:analcont}, using the original splitting
from the definition of $\sum_2$ instead of the splitting from \eqref{eqn:Mwsplit}. If $M$ does not satisfy \eqref{eqn:Msmall}, then $c^2\mathfrak{z}_2\leq |c|\cdot |c\mathfrak{z}+d|< 2y^{-1}$
and hence
\begin{equation}\label{eqn:cbnd}
|c|<\sqrt{2}\left(y\mathfrak{z}_2\right)^{-\frac{1}{2}}.
\end{equation}
For each $c$ satisfying \eqref{eqn:cbnd}, if \eqref{eqn:Msmall} is not satisfied, then $
|c\mathfrak{z}_1+d|
<|c\mathfrak{z}+d| <2(yc)^{-1}$, and hence
\begin{equation}\label{eqn:dbnd}
|d|<\left|c
\mathfrak{z}_1
\right|+ 2(yc)^{-1}\leq \left|c
\mathfrak{z}_1
\right| + 2y^{-1}< |c \mathfrak{z}|+2y^{-1}.
\end{equation}
We conclude that
\rm
$\mathcal{T}$ is finite, with $\#\mathcal{T}$ bounded by a constant only depending on $z$ and $\mathfrak{z}$. We moreover bound
\[
\frac{y^{1+s}}{j(M, \mathfrak{z})^{2}|j(M, \mathfrak{z})|^{2s}}\ll y^{1+\sigma} \mathfrak{z}_2^{-2-2\sigma}.
\]

Thus the contribution from elements in $\mathcal{T}$ to $\sum_2$
(recalling that we are using the original splitting in \eqref{eqn:analcont})
may be estimated against
\begin{multline*}
y^{1+\sigma}\mathfrak{z}_2^{-2-2\sigma}\max_{(M,n)\in\mathcal{T}} \left|\frac{1}{(M^*\mathfrak{z} + w) \left( M^* \mathfrak{z} +\overline{w}\right) \left| M^* \mathfrak{z} +w\right|^{2s}} - \frac{1}{|w|^{2s+2}}\right| \#\mathcal{T}
\\\ll_{\mathfrak{z},z,\mathcal{R}} \max_{(M,n)\in\mathcal{T}}\left(\frac{1}{ \left| M^* \mathfrak{z} +\overline{w}\right| \left| M^* \mathfrak{z} +w\right|^{1+2\sigma}}+\frac{1}{|w|^{2\sigma+2}}\right).
\end{multline*}
Since $\im\left(M^*\mathfrak{z}+w\right)\geq y$ and $|w|\geq y$, by \eqref{eqn:wbnd}, $\left|M^*\mathfrak{z}+w\right|^{-1-2\sigma}$ and $|w|^{-2\sigma-2}$ may be bounded against $\ll_y y^{-2\sigma_0}+y^{-2\sigma_1}\ll_{\mathcal{R},y}1$. We finally note that since $\mathcal{T}$ is finite, $\max_{(M,n)\in\mathcal{T}} \left| M^* \mathfrak{z} +\overline{w}\right|^{-1}$ exists unless $M^*\mathfrak{z}+\overline{w}=0$.  However, $M^*\mathfrak{z}+\overline{w}=0$ if and only if $T^nM\mathfrak{z}=z$, which is not solvable by assumption.
This implies absolute locally uniform convergence in $\mathcal{R}$.
\end{proof}

\subsection{Analytic continuation of $\sum_1$}

We finally consider $\sum_1$.
\begin{lemma}\label{lem:sum1}
If $M\mathfrak{z}=z$ has no solution $M\in\Gamma_0(N)$, then the series $\sum_1$ converges absolutely uniformly in $\mathcal{R}$.
\end{lemma}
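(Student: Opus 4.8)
The plan is to estimate $\sum_1$ directly by the sum of the absolute values of its terms. Recall from \eqref{eqn:analcont} that $\sum_1=2\sum_{n\in\Z}\varphi_s(\mathfrak{z}+n,z)$; since $\mathfrak{z}+n$ has imaginary part $\mathfrak{z}_2$, we have
\[
\left|\varphi_s(\mathfrak{z}+n,z)\right|=\mathfrak{z}_2^{1+\sigma}\left|\mathfrak{z}+n-z\right|^{-1}\left|\mathfrak{z}+n-\overline{z}\right|^{-1-2\sigma}.
\]
The prefactor satisfies $\mathfrak{z}_2^{1+\sigma}\leq\mathfrak{z}_2^{1+\sigma_0}+\mathfrak{z}_2^{1+\sigma_1}$ for all $s\in\mathcal{R}$, so it suffices to bound $\sum_{n\in\Z}\left|\mathfrak{z}+n-z\right|^{-1}\left|\mathfrak{z}+n-\overline{z}\right|^{-1-2\sigma}$ uniformly for $s\in\mathcal{R}$, with implied constants depending on $\mathfrak{z},z,\mathcal{R}$ as in Lemma \ref{lem:sum2}.

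First I would split off the tail $|n|\geq n_0$ with $n_0:=2(|\mathfrak{z}|+|z|+1)$. For these $n$ the triangle inequality gives $|\mathfrak{z}+n-z|\geq|n|/2$ and $|\mathfrak{z}+n-\overline{z}|\geq|n|/2$, and since $\sigma\geq\sigma_0>-1/2$ makes the exponent $-1-2\sigma$ negative, each such term is $\ll_{\mathcal{R}}|n|^{-1}(|n|/2)^{-1-2\sigma}\ll|n|^{-2-2\sigma_0}$, uniformly in $s$. Because $2+2\sigma_0>1$, the series $\sum_{|n|\geq n_0}|n|^{-2-2\sigma_0}$ converges, so the tail converges absolutely and uniformly in $\mathcal{R}$.

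For the finitely many remaining $n$ with $|n|<n_0$, I would bound each term individually and uniformly in $s\in\mathcal{R}$. One always has $|\mathfrak{z}+n-\overline{z}|\geq\im(\mathfrak{z}+n-\overline{z})=\mathfrak{z}_2+y>0$, and $|\mathfrak{z}+n-z|\geq\delta:=\min_{|n|<n_0}|\mathfrak{z}+n-z|$. Here $\delta>0$ is exactly where the hypothesis enters: $M\mathfrak{z}=z$ having no solution in $\Gamma_0(N)$ includes, for the matrices with $c=0$ that make up $\sum_1$, the statement $\mathfrak{z}+n\neq z$ for every $n\in\Z$, so none of these finitely many distances vanishes. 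Since $\mathfrak{z}_2^{1+\sigma}$ and the two negative powers are continuous in $s$ and $\mathcal{R}$ is compact, each of these finitely many terms is $\ll_{\mathfrak{z},z,\mathcal{R}}1$; combining this with the tail estimate completes the argument.

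There is really no serious obstacle here. In contrast to $\sum_2$ and $\sum_3$, the sum $\sum_1$ requires no Poisson summation, no Weil bound for Kloosterman sums, and no comparison against an Eisenstein series; the only thing that could go wrong — a term with $\mathfrak{z}+n=z$, which would be a genuine pole of $\mathcal{P}_{N,s}$ — is precisely what the non-solvability hypothesis excludes. The one point deserving care is uniformity in $s$, which is handled throughout by replacing each $s$-dependent exponent with its worst case ($\sigma_0$ for the tail) and by invoking compactness of $\mathcal{R}$ for the finitely many small-$|n|$ terms.
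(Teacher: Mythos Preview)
Your argument is essentially the same as the paper's: split the sum into a tail handled by the triangle inequality and a finite head handled via the hypothesis $T^n\mathfrak{z}\neq z$, with uniformity in $s$ coming from bounding exponents by their extremes on $\mathcal{R}$. One slip to correct: in the definition $\varphi_s(\mathfrak{z},z)=y^{1+s}(\mathfrak{z}-z)^{-1}(\mathfrak{z}-\overline{z})^{-1}|\mathfrak{z}-\overline{z}|^{-2s}$ the factor $y$ is $\im(z)$, not the imaginary part of the first argument, so $\left|\varphi_s(\mathfrak{z}+n,z)\right|=y^{1+\sigma}\left|\mathfrak{z}+n-z\right|^{-1}\left|\mathfrak{z}+n-\overline{z}\right|^{-1-2\sigma}$ rather than $\mathfrak{z}_2^{1+\sigma}\cdots$; this is harmless, since the same bound $y^{1+\sigma}\leq y^{1+\sigma_0}+y^{1+\sigma_1}$ applies and the rest of your proof goes through unchanged.
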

\begin{proof}
We have
\begin{equation*}
\left|\varphi_{s}(\mathfrak{z}+n,z)\right|=y^{1+\sigma} |n|^{-2-2\sigma}\left|1+\frac{\mathfrak{z}-z}{n}\right|^{-1}\left|1+\frac{\mathfrak{z}-\overline{z}}{n}\right|^{-1-2\sigma}.
\end{equation*}
Hence, by \eqref{eqn:Wbnd2}, the contribution over $|n|\geq 2(|\mathfrak{z}|+|z|)$ to $\sum_1$ can be estimated against

\begin{equation}\label{eqn:bign}
\ll_{\mathcal{R}}\left(y^{1+\sigma_0}+y^{1+\sigma_1}\right)
\sum_{n\geq 2(|\mathfrak{z}|+|z|)} n^{-2-2\sigma}\ll_{y,\mathcal{R}} \sum_{n\geq 1}n^{-2-2\sigma}=\zeta(2+2\sigma)\leq \zeta\left(2+2\sigma_0\right).
\end{equation}
For the terms with $|n|\leq 2(|\mathfrak{z}|+|z|)$, we obtain the estimate
$$
\ll \left(|\mathfrak{z}|+|z|+1\right)\max_{|n|\leq 2(|\mathfrak{z}|+|z|)} \left|\varphi_s(\mathfrak{z}+n,z)\right|.
$$
For each of these (finitely many) $n$, we use $|T^n\mathfrak{z}-\overline{z}|\geq \im\left(T^n\mathfrak{z}-\overline{z}\right)\geq y$ to estimate
$$
\left|\varphi_s(\mathfrak{z}+n,z)\right|=y^{1+\sigma}\left|T^n\mathfrak{z}-z\right|^{-1}\left|T^n\mathfrak{z}-\overline{z}\right|^{-1-2\sigma}\leq \left|T^n\mathfrak{z}-z\right|^{-1}\left(y^{-\sigma_0}+y^{-\sigma_1}\right).
$$
Furthermore, since $T^n\mathfrak{z}\neq z$ for all $n\in\Z$ by assumption, $\max_{|n|\leq 2\left(|\mathfrak{z}|+|z|\right)}\left|T^{n}\mathfrak{z}-z\right|^{-1}$ exists.  We may hence (uniformly in $\mathcal{R}$) bound the contribution of these terms by
\begin{equation}\label{eqn:smalln}
\left(|\mathfrak{z}|+|z|+1\right)\left(y^{\sigma_0}+y^{-\sigma_1}\right)\max_{|n|\leq 2\left(|\mathfrak{z}|+|z|\right)}\left|T^{n}\mathfrak{z}-z\right|^{-1},
\end{equation}
completing the proof.
\end{proof}

Theorem \ref{thm:analcont} is now a direct consequence of \eqref{eqn:analcont} and  Lemmas \ref{lem:sum3}, \ref{lem:sum2}, and \ref{lem:sum1}.

\section{Properties of $y\Psi_{2,N}$}\label{sec:explicit}
In this section, we explicitly compute the analytic continuation $y\Psi_{2,N}(\mathfrak{z},z):=\mathcal{P}_{N,0}(\mathfrak{z},z)$ and investigate its properties.  In particular, we show that it is modular and harmonic in both variables.

\subsection{The term $\sum_3$}
In this section, we evaluate the analytic continuation of $\sum_3$. To state the result, we let
 $c_N:=-6 N^{-1}\prod_{p|N} (1+p^{-1})^{-1}=-6/[\SL_2(\Z):\Gamma_0(N)]$.
\begin{proposition}\label{prop:sum3s}
The analytic continuation of $\sum_3$ to $s=0$  is explicitly given by
\begin{equation}\label{eqn:analcontsum3}
\frac{c_N}{\mathfrak{z}_2}-8 \pi^3  \sum_{m\geq 1} m e^{2\pi i m \mathfrak{z}} \left( \sum_{n\leq 0} \sum_{\substack{c\geq 1\\ N|c}}\frac{K \left( m,n ;c \right)}{c^2}
e^{-2\pi i nz} + \sum_{n\geq 1}\sum_{\substack{c\geq 1\\ N\mid c}}\frac{K \left( m,n ;c \right)}{c^2} e^{-2\pi i n \overline{z}}\right).
\end{equation}
This function is annihilated by $\Delta_{0,z}$ and $\Delta_{2,\mathfrak{z}}$.
\end{proposition}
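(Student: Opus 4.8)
The plan is to begin from the expression \eqref{eqn:rewritemainfinal} furnished by Lemma~\ref{lem:sum3}, which represents $\sum_3$ on the half-plane $\sigma>-1/4$, and to let $s\to 0$. By Lemma~\ref{lem:sum3} this expression is analytic, in particular continuous, at $s=0$; moreover the terms with $(m,n)\neq(0,0)$ converge absolutely and locally uniformly in $s$ throughout $\sigma>-1/4$, and each $t$- and $w$-integral is analytic there. Hence the limit may be computed term by term, which reduces the problem to evaluating the various integrals and constants at $s=0$.

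First I would compute the two families of Fourier integrals by contour integration. At $s=0$ one has $\varphi_0(t,z)=y(t-z)^{-1}(t-\overline z)^{-1}$, and closing the contour in the upper half-plane for $n\leq 0$ and in the lower half-plane for $n>0$ yields
\[
\int_{\R}\varphi_0(t,z)e^{-2\pi i nt}\,dt=\begin{cases}\pi\, e^{-2\pi i nz}&\text{if }n\leq 0,\\ \pi\, e^{-2\pi i n\overline z}&\text{if }n>0.\end{cases}
\]
Likewise $\int_{\R}(\mathfrak z+w)^{-2}e^{-2\pi i mw}\,dw$ vanishes for $m\leq 0$ (the integrand is holomorphic in the upper half-plane, where the contour is closed) and equals $-4\pi^2 m\,e^{2\pi i m\mathfrak z}$ for $m\geq 1$, by the residue at the double pole $w=-\mathfrak z$. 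Thus in the triple sum of \eqref{eqn:rewritemainfinal} only $m\geq 1$ survives, the condition $(m,n)\neq(0,0)$ becomes automatic, and after substituting the above and separating $n\leq 0$ from $n\geq 1$ one obtains precisely the second term of \eqref{eqn:analcontsum3}, with the constant $-8\pi^3=2\cdot(-4\pi^2)\cdot\pi$.

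Next I would treat the $m=n=0$ contribution, i.e.\ the last line of \eqref{eqn:rewritemainfinal}. Here $\int_{\R}\varphi_0(t,z)\,dt=\pi$, the gamma quotient evaluates to $2\sqrt\pi\,\Gamma(\tfrac12)/\Gamma(1)=2\pi$, and the only delicate limit is $\lim_{s\to 0}s\,\zeta(2s+1)/\zeta(2s+2)=\tfrac12/\zeta(2)=3/\pi^2$, using that $\zeta$ has a simple pole of residue $1$ at $1$ and $\zeta(2)=\pi^2/6$. Combining these with the elementary identity $\tfrac{\phi(N)}{N^2}\prod_{p\mid N}(1-p^{-2})^{-1}=\tfrac1N\prod_{p\mid N}(1+p^{-1})^{-1}$ multiplies out to exactly $c_N/\mathfrak z_2$, the alternative form of $c_N$ following from $[\SL_2(\Z):\Gamma_0(N)]=N\prod_{p\mid N}(1+p^{-1})$. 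Adding the two pieces gives \eqref{eqn:analcontsum3}.

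Finally, for the harmonicity statement I would first note that \eqref{eqn:analcontsum3} converges absolutely and locally uniformly: the inner $c$-sums converge by Weil's bound \eqref{eqn:Kbnd}, the $n$-sums by the exponential decay of $e^{-2\pi i nz}$ and $e^{-2\pi i n\overline z}$ for $y>0$, and the $m$-sum by that of $e^{2\pi i m\mathfrak z}$ for $\mathfrak z_2>0$. Hence $\Delta_{0,z}$ and $\Delta_{2,\mathfrak z}$ act term by term and it suffices to check the individual summands. The term $c_N/\mathfrak z_2$ is constant in $z$, each $e^{-2\pi i nz}$ with $n\leq 0$ is holomorphic in $z$, and each $e^{-2\pi i n\overline z}$ with $n\geq 1$ is anti-holomorphic in $z$; since $\Delta_0=-4y^2\,\partial_z\partial_{\overline z}$, all are annihilated by $\Delta_{0,z}$. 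For $\Delta_{2,\mathfrak z}$, each $e^{2\pi i m\mathfrak z}$ is holomorphic in $\mathfrak z$, hence lies in the kernel of $\xi_{2}$ and so, by \eqref{eqn:Deltasplit}, of $\Delta_{2,\mathfrak z}$, while a one-line computation (or the observation that $\xi_{2,\mathfrak z}(\mathfrak z_2^{-1})$ equals the constant $-1$) gives $\Delta_{2,\mathfrak z}(\mathfrak z_2^{-1})=0$. The only genuine obstacle is justifying the termwise passage to the limit $s\to 0$ in the triple sum, together with the residue evaluations of the integrals, whose integrands carry branch points for general $s$; this is already supplied by the uniform convergence and analyticity established in Lemma~\ref{lem:sum3}, after which everything else is routine.
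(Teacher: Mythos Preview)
Your argument is correct and follows essentially the same route as the paper: plug $s=0$ into the analytic continuation \eqref{eqn:rewritemainfinal}, evaluate the two Fourier integrals by contour shifting (the paper packages these into an auxiliary function $g_n(w_1,w_2)$ for reuse later, but the content is identical), simplify the $m=n=0$ term via $\lim_{s\to 0}s\zeta(2s+1)=\tfrac12$ and the Euler-product identity, and deduce harmonicity termwise from absolute locally uniform convergence. Your verification that $\Delta_{2,\mathfrak{z}}(\mathfrak{z}_2^{-1})=0$ via $\xi_{2}$ is a nice touch the paper leaves implicit.
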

\begin{remark}
The fact that \eqref{eqn:analcontsum3} is annihilated by $\Delta_{0,z}$ alternatively follows by Lemma \ref{lem:xiPsi}.
\end{remark}
\begin{proof}[Proof of Proposition \ref{prop:sum3s}]
We first evaluate the second term in \eqref{eqn:rewritemainfinal} for $s=0$.
Using the fact that $\lim_{s\to 0}s
\zeta(2s+1)=1/2
$, $\Gamma(1/2)=\sqrt{\pi}$, and $\zeta(2)=\pi^2/6$, it becomes
\begin{equation}\label{eqn:1/vterm}
\frac{c_N}{\pi \mathfrak{z}_2}\int_{\R}\varphi_0
(t,z)
dt.
\end{equation}
To compute the integral in \eqref{eqn:1/vterm}, we define more generally, for $w_1\in \H\cup -\H$, $w_2\in \H$, and $n\in\Z$
$$
g_n\left(w_1,w_2\right):=\int_\R (w_1+t)^{-1}(w_2+t)^{-1}e^{-2\pi i nt} dt.
$$
Shifting the path of integration to $-i\sgn (n-1/2)\infty$, the Residue Theorem yields
\begin{equation}\label{eqn:gval}
g_{n}\left(w_1,w_2\right)
=\begin{cases}
0 &\text{if }n\leq 0\text{ and }w_1\in \H,\\
2\pi i \left(w_2-w_1\right)^{-1}
e^{2\pi inw_1
}&\text{if }n\leq 0\text{ and } w_1
\in -\H,\\
2\pi i \left(w_2-w_1\right)^{-1} e^{2\pi i n w_2}&\text{if }n> 0\text{ and }w_1\in -\H,\\
2\pi i \left(w_2-w_1\right)^{-1} \left(e^{2\pi i n w_2}- e^{2\pi i n w_1}\right)&\text{if }n>0
\text{ and } w_2\neq w_1\in\H,\\
-4\pi^2 n e^{2\pi i n w_1}&\text{if }n>0\text{ and }w_1=w_2.
\end{cases}
\end{equation}
Thus we in particular obtain
$$
\int_{\R}\varphi_0(t,z)dt =yg_0(-z,-\overline{z})=\pi,
$$
and hence \eqref{eqn:1/vterm} equals $c_N/\mathfrak{z}_2$, giving the first term in \eqref{eqn:analcontsum3}.

Next we turn to the first term in \eqref{eqn:rewritemainfinal} with $s=0$.
To simplify the sums over $m$ and $n$, we rewrite
\begin{align*}
\int_{\R} \varphi_0
\left( t,z \right)
 e^{-2\pi i nt} dt
&= y g_n \left( - z, -\overline{z} \right),\\
\int_{\R} \frac{1}{(\mathfrak{z} + w)^2} e^{-2\pi i m w} dw &= g_m \left( \mathfrak{z}, \mathfrak{z}\right).
\end{align*}
\indent
Plugging in  \eqref{eqn:gval} yields the sum over $m>0$ in \eqref{eqn:analcontsum3}. This series converge absolutely locally uniformly on compact subsets of $\H\times\H$ due to the exponential decay in $\mathfrak{z}_2$ and $y$ in the sums over $m$ and $n$, respectively.  Hence the function is harmonic in both $\mathfrak{z}$ and $z$ because it is termwise.
\end{proof}

\subsection{The term $\sum_2$} We next consider $\sum_2$ in \eqref{eqn:analcont}.

\begin{proposition}\label{prop:sum2s}
The series $\sum_2$ with $s=0$ converges absolutely locally uniformly in $\mathfrak{z}$ and $z$ if $M\mathfrak{z}=z$ is not solvable for $M\in \Gamma_0(N)$ and is meromorphic as a function of $\mathfrak{z}$ and harmonic as a function of $z$.
\end{proposition}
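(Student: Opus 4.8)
To prove Proposition \ref{prop:sum2s}, the plan is to treat separately the three assertions---absolute local uniform convergence at $s=0$, meromorphy in $\mathfrak{z}$, and harmonicity in $z$---each reducing to what has already been established for $\sum_2$ in the region of absolute convergence.

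For the convergence, I would apply Lemma \ref{lem:sum2} with the rectangle $\mathcal{R}$ chosen so that $-1/2<\sigma_0<0<\sigma_1$, placing $s=0$ in its interior; this immediately yields that $\sum_2$, with summands evaluated at $s=0$, converges absolutely. To promote this to \emph{local} uniform convergence in $(\mathfrak{z},z)$, I would revisit the three groups of terms in the proof of Lemma \ref{lem:sum2}: their majorants---the right-hand sides of \eqref{eqn:nbig} and \eqref{eqn:nsmallMbig}, together with the finite contribution of $\mathcal{T}$---are continuous functions of $(\mathfrak{z},z)$ times the majorant $\sum_M|j(M,\mathfrak{z})|^{-3-2\sigma}$ of the weight $3+2\sigma$ Eisenstein series, which converges uniformly for $\mathfrak{z}$ in compact subsets of $\H$. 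On a compact $K\subseteq\{(\mathfrak{z},z)\in\H^2:M\mathfrak{z}\neq z\text{ for all }M\in\Gamma_0(N)\}$ the cardinality $\#\mathcal{T}$ stays bounded, only finitely many pairs $(M,n)$ can occur in $\mathcal{T}$ as $(\mathfrak{z},z)$ varies over $K$, and since $M^*\mathfrak{z}+\overline{w}=T^nM\mathfrak{z}-z$ is continuous and nowhere zero on $K$, the factors $|M^*\mathfrak{z}+\overline{w}|^{-1}$ are uniformly bounded there; hence the convergence at $s=0$ is locally uniform on the stated open set.

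For the meromorphy in $\mathfrak{z}$, I would use the identity $j(M,\mathfrak{z})^2(M\mathfrak{z}-z)(M\mathfrak{z}-\overline{z})=\big((a-cz)\mathfrak{z}+b-dz\big)\big((a-c\overline{z})\mathfrak{z}+b-d\overline{z}\big)$, which gives
$$
\frac{\varphi_0(M\mathfrak{z},z)}{j(M,\mathfrak{z})^2}=\frac{y}{\big((a-cz)\mathfrak{z}+b-dz\big)\big((a-c\overline{z})\mathfrak{z}+b-d\overline{z}\big)}.
$$
For fixed $z\in\H$ this is holomorphic in $\mathfrak{z}$ throughout $\H$ apart from a simple pole at $\mathfrak{z}=M^{-1}z$, the second zero $\mathfrak{z}=M^{-1}\overline{z}$ of the denominator lying in $-\H$ and hence outside $\H$; moreover $\varphi_0(a/c,z)/j(M,\mathfrak{z})^2$ is holomorphic in $\mathfrak{z}\in\H$. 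Thus every summand of $\sum_2$ is meromorphic in $\mathfrak{z}$ with at worst a simple pole at $M^{-1}z$. Since $\{M^{-1}z:M\in\Gamma_0(N),\,c\geq1\}$ is contained in the discrete orbit $\Gamma_0(N)z$, near any point $p$ of this set only finitely many summands are singular; removing them leaves a series which, by the estimates above, converges locally uniformly near $p$ and so is holomorphic there, and adding the finitely many meromorphic summands back shows $\mathfrak{z}\mapsto\sum_2$ is meromorphic on $\H$.

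For the harmonicity in $z$, since $\Delta_{0,z}=-4y^2\partial_z\partial_{\overline{z}}$ and $j(M,\mathfrak{z})$ is independent of $z$, it suffices to check that $z\mapsto\varphi_0(w,z)=\tfrac{z-\overline{z}}{2i}(w-z)^{-1}(w-\overline{z})^{-1}$ is harmonic for each fixed $w\in\H\cup\R$; a direct computation gives $\partial_{\overline{z}}\varphi_0(w,z)=-\tfrac{1}{2i}(w-\overline{z})^{-2}$, which is anti-holomorphic in $z$, whence $\partial_z\partial_{\overline{z}}\varphi_0(w,z)=0$. Therefore each summand of $\sum_2$ is annihilated by $\Delta_{0,z}$, and since a locally uniform limit of solutions of $\Delta_0u=0$ (these being precisely the harmonic functions, up to the nonvanishing factor $-y^2$) is again such a solution, $z\mapsto\sum_2$ is annihilated by $\Delta_{0,z}$ on the set where $M\mathfrak{z}\neq z$. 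I expect the only genuinely delicate point, beyond invoking Lemma \ref{lem:sum2}, to be the uniformity bookkeeping in the first step---in particular the uniform control of the finite exceptional family $\mathcal{T}$ on compacta---since both the meromorphy (via Weierstrass's theorem) and the harmonicity (via closure under locally uniform limits) rest on that local uniform convergence.
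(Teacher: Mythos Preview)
Your proposal is correct and follows essentially the same approach as the paper: show that each summand of $\sum_2$ at $s=0$ is meromorphic in $\mathfrak{z}$ and harmonic in $z$, and then deduce the corresponding properties of the sum from absolute local uniform convergence, which in turn is obtained by reusing the three majorants from the proof of Lemma~\ref{lem:sum2} (the Eisenstein-series tail bounds \eqref{eqn:nbig} and \eqref{eqn:nsmallMbig} together with the finite exceptional family $\mathcal{T}$). The only cosmetic differences are that the paper first rewrites the $s=0$ summand in the compact form \eqref{eqn:sum20}, which makes both the meromorphy and the harmonicity visible at once, and carries out the uniformity bookkeeping by constructing explicit neighborhoods $\mathcal{N}_z(\delta,V,R)$ (in one variable at a time) rather than working on arbitrary compacta in $\H\times\H$ as you do.
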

\begin{proof}
After rewriting, the series $\sum_2$ with $s=0$ becomes
\begin{equation}\label{eqn:sum20}
i
\sum_{\substack{M=\left(\begin{smallmatrix}a&b\\c&d\end{smallmatrix}\right)\in\Gamma_0(N)\\
c\geq 1
}} \frac{ \frac{1}{\left(M\mathfrak{z}-z\right)\left(\frac{a}{c}-z\right)}-\frac{1}{\left(M\mathfrak{z}-\overline{z}\right)\left(\frac{a}{c}-\overline{z}\right)}}{c j(M, \mathfrak{z})^3}.
\end{equation}
Each summand is meromorphic as a function of $\mathfrak{z}$ and harmonic
 as a function of $z$.
It hence suffices to prove locally uniform convergence in $\mathfrak{z}$ and $z$ to
show that
 $\sum_2$ has the desired properties.
Since the argument for $z$ is similar, we only prove the statement for $\mathfrak{z}$.

We begin by constructing local neighborhoods around each $\mathfrak{z}\in\H$ for which $M\mathfrak{z}=z$ with $M\in \Gamma_0(N)$ does not have a solution.
Since for each $\mathfrak{z}\in\H$, $\left\{ M\mathfrak{z}\middle| M\in \Gamma_0(N)\right\}$ is a lattice in $\H$, $\delta_z(\mathfrak{z})=\delta_{z,N}(\mathfrak{z}):=\min_{M\in\Gamma_0(N)} \left|M\mathfrak{z}-z\right|$ exists.  For $\delta,V,R>0$, we then define the set
$$
\mathcal{N}_{z}(\delta,V,R):=\left\{ \mathfrak{z}\in\H\middle|\; \delta_z(\mathfrak{z})\geq \delta,\ \frac{R}{2}\leq |\mathfrak{z}|\leq 2R,\ \frac{V}{2}\leq \mathfrak{z}_2\leq 2V\right\}.
$$

We first claim that for every $\tau_0\in\H$ with $\delta_{z}(\tau_0)\neq 0$, there exists $\delta>0$ sufficiently small (depending on $\tau_0$) such that for $R=|\tau_0|$ and $V=v_0$, the set $\mathcal{N}_{z}(\delta,V,R)$ is a neighborhood of $\tau_0$.
In particular, we show that for $\varepsilon>0$ sufficiently small, $\mathcal{N}_{z}(\delta,v_0,|\tau_0|)$ contains the ball of radius $\varepsilon$ around $\mathfrak{z}=\tau_0$.  Firstly, if
 $|\mathfrak{z}-\tau_0|<\varepsilon$ for $\varepsilon>0$ sufficiently small, then
the last two conditions required for elements of
 $\mathcal{N}_{z}(\delta,V,R)$ are satisfied.
It hence remains to show that for all $M\in\Gamma_0(N)$ $|M\mathfrak{z}-z|>\delta$ if $\delta$ and $\varepsilon$ are sufficiently small.
To see this, we first note that $\beta_z(\mathfrak{z}):=\min_{M\in\Gamma_0(N)}|\tau_0-M^{-1}z|>0$ exists
and
 $\beta_z(\mathfrak{z})=0$ if and only if $\delta_{z}(\mathfrak{z})= 0$.  If $\varepsilon<\beta_z(\tau_0)$, then the triangle inequality implies that
$$
\left|M\mathfrak{z}-z\right|=\left|\frac{j(M,\mathfrak{z})}{j(M,z)}\right| \left|\mathfrak{z}-M^{-1}z\right|\geq \left|\frac{j(M,\mathfrak{z})}{j(M,z)}\right|\left( \left|\tau_0-M^{-1}z\right|-\left|\mathfrak{z}-\tau_0\right|\right)
\geq  \left|\frac{j(M,\mathfrak{z})}{j(M,z)}\right|\left(\beta_z\!\left(\tau_0\right)-\varepsilon\right).
$$

For $c=0$ this immediately gives that $\left|M\mathfrak{z}-z\right| \geq \beta_z\!\left(\tau_0\right)-\varepsilon$, and hence, for $\delta$ sufficiently small (depending on $\tau_0$ but independent of $M$), we have $\left|M\mathfrak{z}-z\right|>\delta$.  For $c\neq 0$ we rewrite
\begin{equation}\label{eqn:jtaujz}
\left|\frac{j(M,\mathfrak{z})}{j(M,z)}\right|=\frac{\left|\mathfrak{z}+\frac{d}{c}\right|}{\left|z+\frac{d}{c}\right|}.
\end{equation}
As $d/c\to\pm \infty$, \eqref{eqn:jtaujz}
converges to $1$, while for $d/c\to 0$, \eqref{eqn:jtaujz} converges to $|\mathfrak{z}/z|$.  Thus \eqref{eqn:jtaujz}
attains a minimum $\mathcal{J}_{z}(\mathfrak{z}):=\min_{M\in\Gamma_0(N)}|j(M,\mathfrak{z})/j(M,z)|>0.$  One sees directly that $\mathcal{J}_{z}(\mathfrak{z})$ is continuous as a function of $\mathfrak{z}$, and hence, for $|\mathfrak{z}-\tau_0|<\varepsilon$ satisfying $\varepsilon<\beta_z(\tau_0)$ and $\varepsilon<\mathcal{J}_{z}(\tau_0)$, we have
$$
|M\mathfrak{z}-z|\geq \left(\mathcal{J}_{z}\left(\tau_0\right)-\varepsilon\right)\left(\beta_z(\tau_0)-\varepsilon\right)>0.
$$
Choosing $\varepsilon$ and $\delta$ sufficiently small (again, depending on $\tau_0$ but independent of $M$), we conclude that $|M\mathfrak{z}-z|\geq \delta$ and hence $\mathfrak{z}\in \mathcal{N}_{z}(\delta,v_0,|\tau_0|)$.  But then $\mathcal{N}_{z}(\delta,v_0,|\tau_0|)$ contains the open ball around $\tau_0$ of radius $\varepsilon$, and is hence a neighborhood of $\tau_0$.
\rm

We next claim that the series $\sum_2$ converges uniformly in $\mathcal{N}_{z}(\delta,V,R)$.
For this, we require a uniform bound for $\sum_{\substack{M\in\Gamma_\infty\backslash \Gamma_0 (N)}} |j(M,\mathfrak{z})|^{-3-2\sigma}$.  Since this series is the termwise absolute value of the weight $3+2\sigma$ Eisenstein series, it is well-known to be smaller than a uniform constant times the value with $\mathfrak{z}=i$.
Thus \eqref{eqn:nbig} implies that the contribution to $\sum_2$ from the terms with $|n|>|z|+1/2+2/\mathfrak{z}_2$ may be bounded absolutely uniformly on any compact subset of $\H$.  Similarly, \eqref{eqn:nsmallMbig} implies a uniform estimate on compact subsets for the contribution of the terms with $|n|\leq |z|+1/2+2/\mathfrak{z}_2$ satisfying \eqref{eqn:Msmall}.

It hence remains to uniformly estimate the sum of the absolute value of those $(M,n)\in\mathcal{T}$ for $\mathfrak{z}\in \mathcal{N}_{z}(\delta,V,R)$.  We begin by bounding
$$
y\sum_{(M,n)\in\mathcal{T}}\left| \frac{1}{\left(M^*\mathfrak{z}+w\right) \left(M^*\mathfrak{z}+\overline{w}\right)}-\frac{1}{|w|^2}\right| \leq y\sum_{(M,n)\in\mathcal{T}}\frac{1}{\left|M^*\mathfrak{z}+w\right| \left|M^*\mathfrak{z}+\overline{w}\right|} + y\sum_{(M,n)\in\mathcal{T}}\frac{1}{|w|^2}.
$$
Since $|w|\geq y$ by \eqref{eqn:wbnd}, the second sum is bounded by
$$
 y\sum_{(M,n)\in\mathcal{T}}\frac{1}{|w|^2}\leq
 \frac{1}{y}
\#\mathcal{T}.
$$

For the first sum, we use the inequalities $|M^*\mathfrak{z}+w|\geq \im(M^*\mathfrak{z}+w)\geq y$ and
$$
\left|M^*\mathfrak{z}+\overline{w}\right|\geq \min_{(M,n)\in\mathcal{T}} \left|M^*\mathfrak{z}+\overline{w}\right| \geq \delta_{z}(\mathfrak{z})\geq \delta
$$
to obtain
$$
y\sum_{(M,n)\in\mathcal{T}}\frac{1}{\left|M^*\mathfrak{z}+w\right| \left|M^*\mathfrak{z}+\overline{w}\right|} \leq
 \frac{1}{ \delta}
\#\mathcal{T}.
$$
Thus
\begin{equation*}
y
\sum_{(M,n)\in\mathcal{T}}\left| \frac{1}{\left(M^*\mathfrak{z}+w\right) \left(M^*\mathfrak{z}+\overline{w}\right)}-\frac{1}{|w|^2}\right|
\leq
\# \mathcal{T}\left(\frac{1}{
\delta}+\frac{1}{y}\right)\ll_{\delta,y}
 \#\mathcal{T}.
\end{equation*}

We next note that
$$
\#\mathcal{T}\ll \left(|z|+1+\frac{1}{\mathfrak{z}_2}\right)\#\left\{ M\in \Gamma_{\infty}\backslash
\Gamma_0(N)\middle| \left|M^*\mathfrak{z}\right|> \frac{y}{2}\right\}.
$$
However, \eqref{eqn:cbnd} and \eqref{eqn:dbnd} imply that
\begin{equation}\label{eqn:Tbnd}
\#\left\{ M\in \Gamma_{\infty}\backslash\Gamma_0(N)
\middle| \left|M^*\mathfrak{z}\right|> \frac{y}{2}\right\}\ll \left(\mathfrak{z}_2y\right)^{-\frac{1}{2}}\left(\left(\mathfrak{z}_2y\right)^{-\frac{1}{2}}|\mathfrak{z}| +y^{-1}\right).
\end{equation}
Since the right-hand side of \eqref{eqn:Tbnd} is continuous in $|\mathfrak{z}|$ and $\mathfrak{z}_2$, it may be uniformly bounded on $\mathcal{N}_z(\delta,V,R)$.
This implies absolute locally uniform convergence of $\sum_2$, as desired.
\end{proof}

\subsection{The term $\sum_1$ }
We next investigate the properties of the analytic continuation of $\sum_1$ to $s=0$.
\begin{proposition}\label{prop:sum1s}
The series $\sum_1$ with $s=0$ converges locally uniformly in $\mathfrak{z}$ and $z$
for which $M\mathfrak{z}=z$ is not solvable with $M\in \Gamma_0(N)$
and is meromorphic as a function of $\mathfrak{z}$ and harmonic as a function of $z$.
\end{proposition}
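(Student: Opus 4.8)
The plan is to handle $\sum_1$ exactly as $\sum_2$ was handled in Proposition \ref{prop:sum2s}, but the situation is much simpler since $\sum_1$ is a one-dimensional (parabolic) sum. First I would record the partial fraction identity
\[
\varphi_0(\mathfrak{z}+n,z)=\frac{1}{2i}\left(\frac{1}{\mathfrak{z}+n-z}-\frac{1}{\mathfrak{z}+n-\overline{z}}\right),
\]
which is immediate from $y=\tfrac{1}{2i}(z-\overline{z})$ together with $\tfrac{1}{(a-z)(a-\overline{z})}=\tfrac{1}{z-\overline{z}}\big(\tfrac{1}{a-z}-\tfrac{1}{a-\overline{z}}\big)$. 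From this representation the pointwise behavior of each summand is transparent: as a function of $z$ it is the sum of a holomorphic and an anti-holomorphic function, hence annihilated by $\Delta_{0,z}$; as a function of $\mathfrak{z}$ it is meromorphic on $\H$ with at most the simple pole $\mathfrak{z}=z-n$, since the putative pole $\mathfrak{z}=\overline{z}-n$ of the second summand lies in $-\H$. In particular, every summand is holomorphic in $\mathfrak{z}$ on the set $U:=\{(\mathfrak{z},z)\in\H\times\H:\ M\mathfrak{z}\neq z\text{ for all }M\in\Gamma_0(N)\}$ on which we work.

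Next I would upgrade the uniform convergence already established in Lemma \ref{lem:sum1} to locally uniform convergence on $U$. Since the rectangle $\mathcal{R}$ of Lemma \ref{lem:sum1} contains $s=0$, the estimates \eqref{eqn:bign} and \eqref{eqn:smalln} apply with $s=0$: the contribution of the terms with $|n|\geq 2(|\mathfrak{z}|+|z|)$ is dominated by $\zeta(2)$ times a factor that is locally bounded in $(\mathfrak{z},z)$, while the contribution of the finitely many remaining terms is dominated by a constant multiple of $\max_{|n|\leq 2(|\mathfrak{z}|+|z|)}|T^{n}\mathfrak{z}-z|^{-1}$. On a compact subset $K\subseteq U$ the quantity $2(|\mathfrak{z}|+|z|)$ is bounded above by some $N_0$, so this maximum may be taken over the single finite range $|n|\leq N_0$, and $\min_{|n|\leq N_0}|T^{n}\mathfrak{z}-z|$ is then a strictly positive continuous function on $K$, hence bounded away from $0$. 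This yields absolute uniform convergence of $\sum_1$ on $K$.

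Finally I would conclude: a locally uniform limit of functions harmonic in $z$ is harmonic in $z$, and a locally uniform limit of functions holomorphic in $\mathfrak{z}$ is holomorphic in $\mathfrak{z}$ on $U$. To obtain genuine meromorphicity in $\mathfrak{z}$ on all of $\H$ for fixed $z$, one splits off from $\sum_1$ the single term $\varphi_0(\mathfrak{z}+n_0,z)$, which near $\mathfrak{z}_0=z-n_0$ is meromorphic with a simple pole, and applies the convergence statement to the holomorphic tail $\sum_{n\neq n_0}\varphi_0(\mathfrak{z}+n,z)$.

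The only point requiring any care — hence the closest thing to an obstacle — is that the bound \eqref{eqn:smalln}, which is stated uniformly in $s\in\mathcal{R}$ but pointwise in $(\mathfrak{z},z)$, must be made uniform on compact subsets of $U$; this is immediate from compactness together with the continuity and positivity of $(\mathfrak{z},z)\mapsto\min_{|n|\leq N_0}|T^{n}\mathfrak{z}-z|$ on $U$. Everything else reduces to the specialization $s=0$ of Lemma \ref{lem:sum1} and the elementary partial fraction decomposition above.
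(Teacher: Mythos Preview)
Your proposal is correct and follows essentially the same approach as the paper. The paper likewise records the partial fraction identity \eqref{sum10}, observes that each summand is holomorphic in $\mathfrak{z}$ and harmonic in $z$, and then appeals to the bounds \eqref{eqn:bign} and \eqref{eqn:smalln} from Lemma \ref{lem:sum1}; the only cosmetic difference is that the paper controls the finitely many small-$n$ terms via the specific neighborhoods $\mathcal{N}_z(\delta,V,R)$ introduced in the proof of Proposition \ref{prop:sum2s} (so that $\max_{|n|\leq 2(|\mathfrak{z}|+|z|)}|T^n\mathfrak{z}-z|^{-1}\leq 1/\delta$), whereas you work directly with an arbitrary compact $K\subseteq U$ and the continuity of $(\mathfrak{z},z)\mapsto \min_{|n|\leq N_0}|T^n\mathfrak{z}-z|$.
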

\begin{proof}
For $s=0$, the series $\sum_1$ becomes
\begin{equation}\label{sum10}
2\sum_{n\in\Z}  \varphi_0(\mathfrak{z}+n,z) = -i\sum_{n\in\Z}\left(\frac{1}{\mathfrak{z}+n-z}-\frac{1}{\mathfrak{z}+n-\overline{z}}\right).
\end{equation}

Each term in (\ref{sum10}) is holomorphic as a function of $\mathfrak{z}$ and harmonic as a function of $z$.
We again only prove locally uniform convergence in $\mathfrak{z}$ and
leave the analoguous argument for $z$ to the reader.  Noting that, for
every $\mathfrak{z}\in \mathcal{N}_z(\delta,V,R)$, the inequality
$$
\max_{|n|\leq 2\left(|\mathfrak{z}|+|z|\right)}\frac{1}{\left|T^{n}\mathfrak{z}-z\right|}\leq \frac{1}{\delta}
$$
is satisfied, one obtains a uniform bound by \eqref{eqn:bign} and \eqref{eqn:smalln}.
\end{proof}

\subsection{Image under $\xi_{0,z}$}\label{sec:xi}
As mentioned in the introduction, we obtain the principal part condition from the  Riemann--Roch Theorem via a pairing of Bruinier and Funke \cite{BF}.  This pairing is between weight $2k$ cusp forms and weight $2-2k$ polar harmonic Maass forms which map to cusp forms under the operator $\xi_{2k,z}$ in the splitting \eqref{eqn:Deltasplit}.
For this reason, it is important to compute
the image of $y\Psi_{2,N}(\mathfrak{z},z)$ under $\xi_{0,z}$ and prove that it is a cusp form, as
we do in this section.

For $s,s'$ with real parts $>1$ and $\tau_0,z_0\in\H$, we additionally require the functions 
\[
P_{N,s,s'}\left(\mathfrak{z}, \tau_0, z, z_0\right):=\sum_{M\in\Gamma_0(N)}\frac1{j(M, \mathfrak{z})^2 |j(M, \tau_0)|^s (M\mathfrak{z}-\overline{z})^2|M\tau_0-\overline{z}_0|^{s^{'}}}.
\]
\indent
Petersson \cite{Pe} studied related functions on $\Gamma(N)$; 
one obtains $P_{N,s,s'}$ by taking a trace of Petersson's functions.  Section 4 of \cite{Pe} then implies that $P_{N,s,s'}(\mathfrak{z},\tau_0,z,z_0)$ has an analytic continuation to $s'=s=0$ which is independent of $\tau_0$ and $z_0$, denoted here by $\Phi_{N}(\mathfrak{z},z)$.

\begin{lemma}\label{lem:xiPsi}
We have $z\mapsto
y\Psi_{2,N}(\mathfrak{z}, z)\in \H_{0}^{\operatorname{cusp}}(N)$ with
\begin{equation}\label{eqn:xi0}
\xi_{0, z} \left(y\Psi_{2,N}(\mathfrak{z},z)\right)=\Phi_N(z,\mathfrak{z}).
\end{equation}
\end{lemma}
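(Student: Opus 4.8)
The plan is to prove the two assertions of Lemma \ref{lem:xiPsi} separately: first that $\xi_{0,z}\left(y\Psi_{2,N}(\mathfrak{z},z)\right)=\Phi_N(z,\mathfrak{z})$, and then that this forces $z\mapsto y\Psi_{2,N}(\mathfrak{z},z)$ to lie in $\H_0^{\operatorname{cusp}}(N)$. For the first assertion, I would work termwise on the three pieces $\sum_1$, $\sum_2$, $\sum_3$ produced by the analytic continuation in Theorem \ref{thm:analcont} and Propositions \ref{prop:sum3s}, \ref{prop:sum2s}, \ref{prop:sum1s}. Since each of those three series at $s=0$ has been shown to converge locally uniformly in $z$ on the complement of the (discrete) set where $M\mathfrak{z}=z$ is solvable, one may interchange $\xi_{0,z}=2i\overline{\partial_{\overline z}}$ with the sum, so it suffices to compute the image of each summand. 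The summands of $\sum_1$ and $\sum_2$ at $s=0$ were already observed to be harmonic in $z$, being $\R$-linear combinations of a holomorphic function of $z$ and a holomorphic function of $\overline z$; applying $\xi_{0,z}$ kills the holomorphic-in-$z$ part and leaves (the conjugate of) the anti-holomorphic part. Carrying this out, the image of $\sum_1+\sum_2$ should assemble exactly into the $\tau_0$-independent analytic continuation $\Phi_N(z,\mathfrak{z})$ of $P_{N,s,s'}$; the factor $(M\mathfrak{z}-\overline z)^{-2}$ appearing in $P_{N,s,s'}$ is precisely what $\xi_{0,z}$ extracts from the $(M\mathfrak{z}-\overline z)^{-1}$ factor in $\varphi_s$, after differentiating the $|M\mathfrak z-\overline z|^{-2s}$-type factors and setting $s=0$. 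For $\sum_3$, Proposition \ref{prop:sum3s} gives the explicit Fourier-type expansion \eqref{eqn:analcontsum3}; applying $\xi_{0,z}$ term-by-term (the series converges locally uniformly on $\H\times\H$) produces a Kloosterman-Bessel expansion which I would match against the Fourier expansion of the relevant part of $\Phi_N(z,\mathfrak z)$ at $i\infty$.

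Rather than matching Fourier expansions piece by piece, a cleaner route for the first assertion is available: define $\Psi^{(s)}(\mathfrak z,z):=\mathcal P_{N,s}(\mathfrak z,z)$ and note that for $\sigma=\operatorname{Re}(s)>0$ one can compute $\xi_{0,z}$ of each summand $\varphi_s(M\mathfrak z,z)/(j^2|j|^{2s})$ directly in the region of absolute convergence; a short calculation gives that $\xi_{0,z}$ applied to $y^{1+s}(\mathfrak z-z)^{-1}(\mathfrak z-\overline z)^{-1}|\mathfrak z-\overline z|^{-2s}$ (with $\mathfrak z$ replaced by $M\mathfrak z$) equals a constant multiple of $y^{2s}$-twisted version of the $P_{N,s,s'}$-summand with $s'$ coupled to $s$. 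One then analytically continues both sides to $s=0$ using Theorem \ref{thm:analcont} on the left and Petersson's Section 4 of \cite{Pe} on the right, invoking the standard fact that $\xi$ commutes with analytic continuation because both the series and its continuation are real-analytic in $z$ and the continuation is obtained by a locally uniform limit. This reduces the whole first assertion to one local computation of $\xi_{0,z}\varphi_s$ plus a bookkeeping argument about which auxiliary parameter of $P_{N,s,s'}$ the variable $s$ maps to; the $\tau_0$-independence of $\Phi_N$ then lets us drop the extra parameters.

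For the second assertion, I would argue as follows. By construction $z\mapsto y\Psi_{2,N}(\mathfrak z,z)=\mathcal P_{N,0}(\mathfrak z,z)$ is $\Gamma_0(N)$-invariant (shown in Section \ref{sec:analcont}) and is annihilated by $\Delta_0$: indeed $\sum_1$ and $\sum_2$ at $s=0$ are harmonic in $z$ termwise, and $\sum_3$ at $s=0$ is harmonic in $z$ by Proposition \ref{prop:sum3s}, so the sum is harmonic. The singularities in $z$ come only from the terms with $M\mathfrak z=z$, i.e.\ from $z$ in the $\Gamma_0(N)$-orbit of $\mathfrak z$; near such a point one summand $\varphi_0(M\mathfrak z,z)$ has a simple pole in $\overline z-$ direction, so condition (3) of the definition of a polar harmonic Maass form holds. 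Growth toward the cusps is at most polynomial, hence certainly at most linear exponential, by the Fourier expansion \eqref{eqn:analcontsum3} of $\sum_3$ together with the fact that $\sum_1,\sum_2$ decay; so $y\Psi_{2,N}(\mathfrak z,\cdot)\in\mathcal H_0(N)$. Finally, since we have just shown $\xi_{0,z}(y\Psi_{2,N}(\mathfrak z,z))=\Phi_N(z,\mathfrak z)$ and Petersson's work (Section 4 of \cite{Pe}) identifies $\Phi_N$ as a weight $2$ cusp form in its first variable, $\xi_{0,z}(y\Psi_{2,N})$ is a cusp form; moreover its singularities in $\H$ are poles (not mild essential singularities), so $y\Psi_{2,N}(\mathfrak z,\cdot)$ is bounded toward the cusps and its singularities are genuine poles, placing it in $\H_0^{\operatorname{cusp}}(N)$.

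The main obstacle I anticipate is the first, computational, assertion: carefully tracking the effect of $\xi_{0,z}$ on the $|j(M,\mathfrak z)|^{2s}$ and $|\mathfrak z-\overline z|^{2s}$ factors, which contribute cross-terms when differentiated, and verifying that after analytic continuation to $s=0$ all the parasitic terms (in particular the Eisenstein-type correction term in Theorem \ref{thm:analcont} and the $c_N/\mathfrak z_2$ term from Proposition \ref{prop:sum3s}) either vanish under $\xi_{0,z}$ or reorganize precisely into the analytic continuation $\Phi_N(z,\mathfrak z)$ of $P_{N,s,s'}$. Matching the two analytic continuations — one built from $\mathcal P_{N,s}$ in this paper, the other from Petersson's $P_{N,s,s'}$ in \cite{Pe} — along a common domain, and checking that the coupling of the spectral parameter $s$ to Petersson's parameters $s,s'$ is the correct one, is where the real bookkeeping lies; everything else is either already established earlier in the paper or is a routine harmonicity/growth check.
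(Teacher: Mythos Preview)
Your ``cleaner route'' is exactly what the paper does, and it is considerably simpler than you anticipate. The key point you are missing is that the factor $|j(M,\mathfrak z)|^{2s}$ depends only on $\mathfrak z$, not on $z$, so it contributes no cross-terms under $\partial_{\overline z}$. After writing
\[
\varphi_s(M\mathfrak z,z)=(M\mathfrak z - z)^{-1}\cdot y^{1+s}(M\mathfrak z-\overline z)^{-1-s}\cdot(\overline{M\mathfrak z}-z)^{-s},
\]
only the middle factor depends on $\overline z$. A one-line product-rule computation gives
\[
\frac{\partial}{\partial\overline z}\Big(y^{1+s}(M\mathfrak z-\overline z)^{-1-s}\Big)=\frac{i}{2}(1+s)\,y^{s}(M\mathfrak z-\overline z)^{-2-s}(M\mathfrak z-z),
\]
and the factor $(M\mathfrak z-z)$ on the right cancels the $(M\mathfrak z-z)^{-1}$ in $\varphi_s$. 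Summing over $M$ yields immediately
\[
\frac{\partial}{\partial\overline z}\,\mathcal P_{N,s}(\mathfrak z,z)=\frac{i}{2}(1+s)\,y^{s}\,P_{N,2s,2s}(\mathfrak z,\mathfrak z,z,z),
\]
so the coupling you were worried about is simply $(s,s')\mapsto(2s,2s)$ with $\tau_0=\mathfrak z$, $z_0=z$. One then analytically continues this identity to $s=0$ and uses Petersson's symmetry $\Phi_N(\mathfrak z,z)=\overline{\Phi_N(z,\mathfrak z)}$ ((23) of \cite{Pe2}) together with Satz~2 of \cite{Pe} identifying $\Phi_N(z,\mathfrak z)$ as a cusp form. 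There are no parasitic terms to chase: the Eisenstein-type correction and the $c_N/\mathfrak z_2$ constant never enter, because the identity is established at the level of $\mathcal P_{N,s}$ \emph{before} any splitting into $\sum_1+\sum_2+\sum_3$. Your first approach (computing $\xi_{0,z}$ termwise on $\sum_j$ at $s=0$ and matching Fourier expansions) would also work but is unnecessarily laborious by comparison.
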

\begin{proof}
A direct calculation gives
\[
\frac{\partial}{\partial \overline{z}}
\left(y^{1+s}\left(M\mathfrak{z}-\overline{z}\right)^{-\left(1+s\right)}\right)
=
\frac{i}{2}(1+s)y^s
\left(M\mathfrak{z}-\overline{z}\right)^{-\left(2+s\right)}(M\mathfrak{z}-z).
\]
Thus, using locally uniform convergence in $\overline{z}$, which can be shown by an argument similar to the proofs of Propositions \ref{prop:sum3s}, \ref{prop:sum2s}, and \ref{prop:sum1s},
\begin{equation}\label{eqn:Ds}
\frac{\partial}{\partial\overline{z}}
\mathcal{P}_{N,s}(\mathfrak{z}, z)=\frac{i}{2}(1+s)y^s P_{N,2s,2s}(\mathfrak{z},\mathfrak{z},z,z).
\end{equation}

Taking the analytic continuation of both sides of \eqref{eqn:Ds} to $s=0$, we conclude that
$$
\frac{\partial}{\partial\overline{z}}
\left(y\Psi_{2,N}(\mathfrak{z}, z)\right)=
\frac{i}{2}
\Phi_N(\mathfrak{z},z).
$$
By conjugating both $P_{N,s,s'}$ and $s$ and then using (23) of \cite{Pe2} to switch the role of the variables, we conclude that

$$
\Phi_N(\mathfrak{z},z) = \overline{\Phi_N(z,\mathfrak{z})},
$$
implying \eqref{eqn:xi0}.  Finally, by Satz 2 of \cite{Pe},
$\Phi_N(z, \mathfrak{z})$ is a cusp form, giving the final claim.
\end{proof}

\section{Expansions of $y\Psi_{2,N}(\mathfrak{z},z)$ in other cusps and limiting behavior towards the cusps and the proof of Theorem \ref{thm:differentials}}\label{sec:cusps}
In this section, we determine the principal parts of $z\mapsto y\Psi_{2,N}(\mathfrak{z},z)$ and then construct a basis of $\H_{0}^{\operatorname{cusp}}(N)$ by applying differential operators to $y\Psi_{2,N}(\mathfrak{z},z)$ in the $\mathfrak{z}$ variable. For this, we show that $y\Psi_{2,N}(\mathfrak{z},z)$ is the $k=1$ analogue of $y^{2k-1}\Psi_{2k,N}(\mathfrak{z},z)$ in the sense that their principal parts are as expected if $k=1$.  Before stating the proposition, we note that the principal parts coming from the meromorphic parts of $(2iy)^{2k-1}\Psi_{2k,N}(\mathfrak{z},z)$ were computed as a special case of (50) of \cite{Pe2}; we expound further upon this analogy in Lemma \ref{lem:basis}.

\begin{proposition}\label{prop:Psigrowth}
If $\mathfrak{z}$ is an elliptic fixed point, then $z\mapsto y\Psi_{2,N}(\mathfrak{z},z)$ vanishes identically. If $\mathfrak{z}\in\H$ is not an elliptic fixed point, then, for every $M\in \Gamma_0(N)$, $y\Psi_{2,N}(\mathfrak{z},z)$ has principal part $\frac{j(M,\overline{\mathfrak{z}})}{2\mathfrak{z}_2 j(M,\mathfrak{z})}X_{M\mathfrak{z}}^{-1}(z)$ around $z=M\mathfrak{z}$ and is bounded towards all cusps.
\end{proposition}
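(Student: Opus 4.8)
The plan is to work from the decomposition $y\Psi_{2,N}(\mathfrak{z},z)=\mathcal{P}_{N,0}(\mathfrak{z},z)=\sum_1+\sum_2+\sum_3$ obtained from \eqref{eqn:analcont} at $s=0$, with the three pieces described by Propositions \ref{prop:sum3s}, \ref{prop:sum2s} and \ref{prop:sum1s}, and to read the $z$-singularities directly off them. By Proposition \ref{prop:sum3s}, the $s=0$ value of $\sum_3$ equals \eqref{eqn:analcontsum3}, which converges locally uniformly on $\H\times\H$ and is thus real analytic in $z$ on all of $\H$, contributing no singularity. By \eqref{sum10}, the $s=0$ value of $\sum_1$ is $-i\sum_{n\in\Z}\bigl((\mathfrak{z}+n-z)^{-1}-(\mathfrak{z}+n-\overline{z})^{-1}\bigr)$, whose only $z$-singularities in $\H$ are simple poles at $z=\mathfrak{z}+n$, i.e.\ at $z=M\mathfrak{z}$ for $M\in\Gamma_0(N)$ with lower-left entry $0$. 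By \eqref{eqn:sum20}, each summand of $\sum_2$ at $s=0$ has exactly one $z$-pole in $\H$, namely at $z=M\mathfrak{z}$ with $c\geq 1$ (the factors $(a/c-z)^{-1}$ and $(a/c-\overline z)^{-1}$ are singular only on $\R$, and $(M\mathfrak{z}-\overline z)^{-1}$ only in the lower half plane).

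Now suppose $\mathfrak{z}$ is not an elliptic fixed point. Then distinct $\pm I$-cosets of $\Gamma_0(N)$ move $\mathfrak{z}$ to distinct points, so the union of the singular sets of $\sum_1|_{s=0}$ and $\sum_2|_{s=0}$ is exactly the orbit $\Gamma_0(N)\mathfrak{z}$, and each orbit point $M\mathfrak{z}$ is a pole of precisely one term of precisely one of the two series; by the locally uniform convergence away from the orbit in Propositions \ref{prop:sum1s} and \ref{prop:sum2s}, deleting that term leaves a series that is real analytic across $M\mathfrak{z}$. Hence
\[
y\Psi_{2,N}(\mathfrak{z},z)=\frac{i}{j(M,\mathfrak{z})^{2}}\cdot\frac{1}{z-M\mathfrak{z}}+O(1)\qquad(z\to M\mathfrak{z}),
\]
where the coefficient is found from the single relevant term: directly from $-i(\mathfrak{z}+n-z)^{-1}$ when $c=0$, and from the $z$-pole of the $M$th summand of \eqref{eqn:sum20} together with the identity $\tfrac{a}{c}-M\mathfrak{z}=(c\,j(M,\mathfrak{z}))^{-1}$ (valid since $\det M=1$) when $c\geq 1$. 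Using $X_{M\mathfrak{z}}^{-1}(z)=1+\frac{M\mathfrak{z}-\overline{M\mathfrak{z}}}{z-M\mathfrak{z}}$ and $\im(M\mathfrak{z})=\mathfrak{z}_2\bigl(j(M,\mathfrak{z})\,j(M,\overline{\mathfrak{z}})\bigr)^{-1}$, one checks that $\frac{j(M,\overline{\mathfrak{z}})}{2\mathfrak{z}_2\,j(M,\mathfrak{z})}X_{M\mathfrak{z}}^{-1}(z)$ has residue exactly $\frac{i}{j(M,\mathfrak{z})^{2}}$ at $z=M\mathfrak{z}$ and is bounded elsewhere, so its difference with $y\Psi_{2,N}(\mathfrak{z},\cdot)$ is $O(1)$ near $M\mathfrak{z}$; this is the claimed principal part, and there are no other singularities in $\H$. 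That $z\mapsto y\Psi_{2,N}(\mathfrak{z},z)$ is bounded towards every cusp is already contained in the membership $y\Psi_{2,N}(\mathfrak{z},\cdot)\in\H_{0}^{\operatorname{cusp}}(N)$ proved in Lemma \ref{lem:xiPsi}.

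For the case that $\mathfrak{z}$ is an elliptic fixed point I would return to the defining series, valid for $\operatorname{Re}(s)$ large, and group the sum over $M\in\Gamma_0(N)$ into cosets $M\Gamma_{\mathfrak{z}}$ of the stabilizer $\Gamma_{\mathfrak{z}}=\langle E\rangle$. Since $E\mathfrak{z}=\mathfrak{z}$ we have $\varphi_s(ME^{j}\mathfrak{z},z)=\varphi_s(M\mathfrak{z},z)$, while the cocycle relation, $j(E,\mathfrak{z})=e^{-\pi i/\omega_{\mathfrak{z}}}$ (equation (26) of \cite{Pe2}, used already in the proof of Proposition \ref{prop:ellexp}), and $|j(E,\mathfrak{z})|=1$ give $j(ME^{j},\mathfrak{z})^{-2}|j(ME^{j},\mathfrak{z})|^{-2s}=j(M,\mathfrak{z})^{-2}|j(M,\mathfrak{z})|^{-2s}e^{2\pi ij/\omega_{\mathfrak{z}}}$. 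Hence each coset contributes $\frac{\varphi_s(M\mathfrak{z},z)}{j(M,\mathfrak{z})^{2}|j(M,\mathfrak{z})|^{2s}}\sum_{j}e^{2\pi ij/\omega_{\mathfrak{z}}}=0$, since $\omega_{\mathfrak{z}}>1$ makes the geometric sum of roots of unity vanish; thus $\mathcal{P}_{N,s}(\mathfrak{z},z)\equiv 0$ for $\operatorname{Re}(s)$ large, and by uniqueness of analytic continuation $y\Psi_{2,N}(\mathfrak{z},z)\equiv 0$.

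The main obstacle is the bookkeeping in the second paragraph: one has to be sure that no point of $\H$ is a pole contributed by two different terms — which is exactly where non-ellipticity of $\mathfrak{z}$ enters — and that deleting the single singular term leaves a series still converging locally uniformly in a full neighbourhood of the pole, so that the remainder really is $O(1)$. Both follow from the convergence statements of Propositions \ref{prop:sum1s} and \ref{prop:sum2s} applied after isolating the offending term, but must be invoked with care; the residue computation and the root-of-unity cancellation are then routine.
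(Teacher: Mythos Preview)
Your treatment of the elliptic fixed point case and of the principal part at $z=M\mathfrak{z}$ is exactly the paper's: the root-of-unity cancellation in $\Gamma_{\mathfrak{z}}$-cosets for the former, and isolation of the unique singular summand in $\sum_1$ or $\sum_2$ (with $\sum_3$ real analytic) for the latter, yielding residue $i/j(M,\mathfrak{z})^2$.

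There is, however, a gap in your appeal to Lemma~\ref{lem:xiPsi} for boundedness at the cusps. The \emph{statement} of that lemma asserts membership in $\H_0^{\operatorname{cusp}}(N)$, which by definition includes boundedness towards all cusps, but its \emph{proof} only computes $\xi_{0,z}(y\Psi_{2,N})=\Phi_N(z,\mathfrak{z})$ and cites Petersson to see this is a cusp form. Knowing that $\xi_{0,z}$ lands in cusp forms tells you $y\Psi_{2,N}(\mathfrak{z},\cdot)\in\mathcal{H}_0^{\operatorname{cusp}}(N)$, not that it lies in the subspace $\H_0^{\operatorname{cusp}}(N)$ of forms bounded at the cusps; a priori it could still have principal parts at cusps. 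The paper itself does not rely on Lemma~\ref{lem:xiPsi} here: immediately after stating Proposition~\ref{prop:Psigrowth} it says ``it hence remains to determine the growth of $y\Psi_{2,N}(\mathfrak{z},z)$ as $z$ approaches a cusp. This is proven in a series of lemmas,'' and then carries this out by computing the explicit Fourier expansion at every cusp in Lemma~\ref{lem:alpha/gammalim}, from which the finite limit as $z\to\alpha/\gamma$ is read off directly. So your invocation of Lemma~\ref{lem:xiPsi} is circular; you need the expansion of Lemma~\ref{lem:alpha/gammalim} (or an equivalent direct estimate) to complete the argument.
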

\rm
To see the statement for an elliptic fixed point $\mathfrak{z}$, we rewrite each element in
the sum over $\Gamma_0(N)$ in the definition of $\mathcal{P}_{N,s}$ as $ME^r$ with $M\in \Gamma_0(N)/\Gamma_{\mathfrak{z}}$, with $E$ generating $\Gamma_{\mathfrak{z}}$ and $r$ running $\pmod{2\omega_{\mathfrak{z}}}$.  The sum over $r$ then becomes
$$
\sum_{r\pmod{2\omega_{\mathfrak{z}}}} j\left(E^r,\mathfrak{z}\right)^{-2}\left|j\left(E^r,\mathfrak{z}\right)\right|^{-2s} =\sum_{r\pmod{2\omega_{\mathfrak{z}}}}e^{\frac{2\pi ir}{\omega_{\mathfrak{z}}}}=
\begin{cases} 
0&\text{if }\omega_{\mathfrak{z}}\neq 1,\\ 
2&\text{if }\omega_{\mathfrak{z}}=1,
\end{cases}
$$
where we used (26) of \cite{Pe2} (note that here $a=-1$) to evaluate $j\left(E^r,\mathfrak{z}\right)^{2}=e^{-\frac{2\pi i r}{\omega_{\mathfrak{z}}}}$.  Thus $\mathcal{P}_{N,s}$
vanishes if $\mathfrak{z}$ is an elliptic fixed point, and hence its analytic continuation is simply the zero function.  

If $\mathfrak{z}$ is not an elliptic fixed point, then the possible poles of $y\Psi_{2,N}(\mathfrak{z},z)$ come from the terms in $\sum_1$ and $\sum_2$ for which $M\mathfrak{z}=z$, since $\sum_3$ converges for all $\mathfrak{z},z\in\H$. Furthermore, by determining terms of \eqref{eqn:sum20} and \eqref{sum10} which contribute to the pole, a direct calculation yields that the residue of the principal part at $z=M\mathfrak{z}$ must be $i/j(M,\mathfrak{z})^2$.  From this, one concludes that the principal part is $\frac{j(M,\overline{\mathfrak{z}})}{2\mathfrak{z}_2 j(M,\mathfrak{z})} X_{M\mathfrak{z}}^{-1}(z)$.

 In order to prove Proposition \ref{prop:Psigrowth}, it hence remains to determine the growth of $y\Psi_{2,N}(\mathfrak{z},z)$ as $z$ approaches a cusp.  This is proven in a series of lemmas.

\subsection{Cusp expansions}
In this section we rewrite the function $\mathcal{P}_{N,s}$ in order to understand the behavior if $z$ is close to a cusp $\alpha/\gamma$, with $\gamma | N$, $\gamma \neq N$, and $(\alpha,N)=1$.   Letting $L:= \left(\begin{smallmatrix} \alpha & \beta \\  \gamma & \delta \end{smallmatrix}\right)$ with  $\alpha\delta \equiv 1\pmod{N}$, it is easy to see that
$$
\mathcal{P}_{N,s}\left( \mathfrak{z}, L z\right) =\sum_{M\in L^{-1} \Gamma_0 (N)}\frac{\varphi_s\left( M\mathfrak{z},z \right)}{j \left( M, \mathfrak{z}\right)^2 \left| j \left( M, \mathfrak{z} \right)\right|^{2s}}.
$$
Hence, for $\sigma>-1/4$, the behavior of (the analytic continuation of) $\mathcal{P}_{N,s}$ as $z$ approaches the cusp $L(i\infty)$ may be determined by taking $z\to i\infty$ on (the analytic continuation of) the right-hand side.

To determine this continuation, we decompose as in \eqref{eqn:analcont} and denote the corresponding sums by $\sum_1(\alpha,\gamma), \sum_2(\alpha,\gamma)$, and $\sum_3(\alpha,\gamma).$  Note, that since $\gamma \neq N$, $\sum_1(\alpha,\gamma)$ cannot occur. Moreover $\sum_2(\alpha, \gamma)$ is treated analogously to $\sum_2$,
 and in particular converges uniformly for $-1/2<\sigma_0\leq \sigma\leq \sigma_1$ under the assumption that $M\mathfrak{z}=z$ is not solvable in $L^{-1}\Gamma_0 (N)$.
Thus, if $\sigma>-1/2$, we may directly compute the Fourier expansion of $\sum_2(\alpha,\gamma)$ for $y$ sufficiently large to determine the growth towards the cusp $\alpha/\gamma$.  We do so in the proof of Lemma \ref{lem:alpha/gammalim}.

We are thus left to consider ${\sum}_3(\alpha, \gamma)$.  A direct calculation shows that
$$
L^{-1} \Gamma_0 (N) =
\left\{
\left(\begin{matrix} a& b\\ c&d\end{matrix}\right)\in\SL_2 (\Z)\middle|
c \equiv -a\alpha \gamma\pmod{N}
\right\}.
$$

It is not hard to see that
the \begin{it}cusp width\end{it} of $\alpha/\gamma$, the minimal $\ell$ such that $\Gamma_{\infty}^{\ell}$ acts on $L^{-1}\Gamma_0(N)$ from the left, is
$\ell=\ell_{\varrho}:= \frac{N/\gamma}{\left( N/\gamma, \gamma \right)}$.  Moreover $\Gamma_\infty$ acts on $L^{-1}\Gamma_0(N)$ from the right.  Thus we obtain, as in Section \ref{subsec:sigma3},
\begin{multline}\label{eqn:sum3cuspexp}
{\sum}_3(\alpha,\gamma)= 2 \ell^{-1-s} \sum_{n\in\Z}\int_{\R} \varphi_s\left(t, \frac{z}{\ell}  \right) e^{-2\pi i nt} dt \\
\times\sum_{m\in\Z} \int_\R \left( \mathfrak{z} +w \right)^{-2-s}
\left( \overline{\mathfrak{z}} + w \right)^{-s}
 e^{-2\pi i mw} dw
 \sum_{
c\geq 1
} c^{-2-2s} K_{\alpha, \gamma} \left( m,n;c \right),
\end{multline}
where $K_{\alpha,\gamma}$ is defined in \eqref{eqn:Kagdef}.

We next prove that the analogue of \eqref{eqn:Kbnd} holds. For this, we write $K_{\alpha,\gamma}$ in terms of the classical Kloosterman sums.  To state the resulting identity, we require the natural splitting $\ell= \ell_1 \ell_2$ with $\ell_1\mid \gamma^{\infty}$ and $(\ell_2, \gamma) = 1$,
where $\ell_1\mid \gamma^{\infty}$ means that there exists $n\in\N_0$ such that $\ell_1\mid \gamma^n$.  A straightforward calculation then shows the following.
\begin{lemma}\label{rewriteKlos}
We have
$$
K_{\alpha, \gamma} \left( m,n;c\right) = \frac{1}{N_1}
e_{\ell_2} \left(- \left[\ell_1 \gamma\alpha \right]_{\ell_2} n \right)
\sum_{
r 
\pmod{N_1}} e_{N_1} \left( 
r
 \left[ \ell_2 \alpha \right]_{N_1} \frac{c}{\gamma} \right)K \left(\ell_1 \left[ \ell_2 \right]_{\ell_1 c} m, n+\frac{\ell_1 c}{N_1}
r;
 \ell_1 c \right),
$$
where  $N/\gamma = N_1 \ell_2$, and $[a]_b$ denotes the inverse of $a\pmod{b}$.
\end{lemma}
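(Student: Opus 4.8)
\textbf{Proof proposal for Lemma \ref{rewriteKlos}.}

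The plan is to unwind the congruence conditions in the definition \eqref{eqn:Kagdef} of $K_{\alpha,\gamma}(m,n;c)$ (read with $n\mapsto m$ in the first slot and $j\mapsto n$ in the second) by repeated Chinese‑remainder splittings and then to detect the surviving congruence by additive‑character orthogonality. Recall $N=\gamma N_1\ell_2$ with $(N_1,\ell_2)=1$, $\ell=\ell_1\ell_2$ with $(\ell_1,\ell_2)=1$, and $N_1=\ell_1(N/\gamma,\gamma)$; since $(N/\gamma,\gamma)\mid\gamma$, this last fact gives $N_1\mid\ell_1 c$ whenever $\gamma\mid c$. First I would dispose of the degenerate moduli: the condition $c\equiv -a\alpha\gamma\pmod N$ forces $\gamma\mid c$ and, after cancelling $\gamma$, is equivalent to $a\equiv -[\alpha]_{N_1\ell_2}(c/\gamma)\pmod{N_1\ell_2}$, which by $(N_1,\ell_2)=1$ splits into $a\equiv -[\alpha]_{\ell_2}(c/\gamma)\pmod{\ell_2}$ and $a\equiv -[\alpha]_{N_1}(c/\gamma)\pmod{N_1}$. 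Reducing either congruence modulo a prime $p\mid(\ell_2,c)$ or $p\mid(\ell_1,c/\gamma)$ forces $p\mid a$, contradicting $(a,c)=1$; hence $K_{\alpha,\gamma}(m,n;c)$ is an empty sum unless $(\ell_2,c)=(\ell_1,c/\gamma)=1$. For such $c$ these congruences moreover force $(a,\ell_1 c)=1$ (so $a$ is invertible mod $\ell_1 c$), while $(\ell_2,\ell_1 c)=1$ makes $[\ell_2]_{\ell_1 c}$ meaningful; it suffices to treat these $c$.

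Since $(\ell_2,\ell_1 c)=1$, I would write $a\bmod\ell c$ via CRT as the pair $(a\bmod\ell_2,\ a\bmod{\ell_1 c})$ and factor the character accordingly, $e_{\ell c}(na)=e_{\ell_2}\!\big(na[\ell_1 c]_{\ell_2}\big)\,e_{\ell_1 c}\!\big(na[\ell_2]_{\ell_1 c}\big)$. The first component of $a$ is pinned down by the previous step, and a short simplification using $[c]_{\ell_2}=[\gamma]_{\ell_2}[c/\gamma]_{\ell_2}$ and $(c/\gamma)[c/\gamma]_{\ell_2}\equiv 1\pmod{\ell_2}$ collapses the $\ell_2$‑factor to the scalar $e_{\ell_2}\!\big(-[\ell_1\gamma\alpha]_{\ell_2}\,n\big)$, the prefactor in the statement. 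What remains is a sum over $a\bmod{\ell_1 c}$ with $(a,\ell_1 c)=1$ and $a\equiv a_1:=-[\alpha]_{N_1}(c/\gamma)\pmod{N_1}$ of $e_{\ell_1 c}\!\big(n[\ell_2]_{\ell_1 c}a\big)e_c(md)$, where $d\equiv\overline a\pmod c$ and $\overline a:=[a]_{\ell_1 c}$; rewriting $e_{\ell c}(m\ell d)=e_c(md)=e_{\ell_1 c}\!\big(m\ell_1\overline a\big)$ turns the summand into $e_{\ell_1 c}\!\big(n[\ell_2]_{\ell_1 c}a+m\ell_1\overline a\big)$. Detecting $a\equiv a_1\pmod{N_1}$ by $\tfrac1{N_1}\sum_{r\bmod N_1}e_{N_1}(r(a-a_1))$ (valid since $N_1\mid\ell_1 c$) and absorbing $e_{N_1}(ra)=e_{\ell_1 c}\!\big(r\tfrac{\ell_1 c}{N_1}a\big)$, the $a$‑sum becomes $K\!\big(m\ell_1,\ n[\ell_2]_{\ell_1 c}+r\tfrac{\ell_1 c}{N_1};\ \ell_1 c\big)$ and the prefactor collapses to $\tfrac1{N_1}\sum_r e_{N_1}(-ra_1)$.

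The output of the previous step does not literally match the statement, so finally I would normalize it. Substituting $x\mapsto\ell_2 x$ in this Kloosterman sum (a bijection on units mod $\ell_1 c$) moves the inverse of $\ell_2$ onto the $m$‑argument, giving $K\!\big(\ell_1[\ell_2]_{\ell_1 c}m,\ n+r\ell_2\tfrac{\ell_1 c}{N_1};\ \ell_1 c\big)$, and then reindexing $r\mapsto[\ell_2]_{N_1}r$ in the outer sum (a bijection mod $N_1$ since $(\ell_2,N_1)=1$) turns $r\ell_2\tfrac{\ell_1 c}{N_1}$ into $r\tfrac{\ell_1 c}{N_1}\pmod{\ell_1 c}$ — because $[\ell_2]_{N_1}\ell_2\equiv 1\pmod{N_1}$ while $N_1\cdot\tfrac{\ell_1 c}{N_1}=\ell_1 c$ — and simultaneously converts $e_{N_1}(-ra_1)$ into $e_{N_1}\!\big(r[\ell_2\alpha]_{N_1}(c/\gamma)\big)$, using $[\ell_2]_{N_1}[\alpha]_{N_1}\equiv[\ell_2\alpha]_{N_1}\pmod{N_1}$. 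Collecting the scalar from the $\ell_2$‑part, the factor $1/N_1$, and this reindexed $r$‑sum yields exactly the asserted identity.

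The argument is entirely elementary, so the ``main obstacle'' is bookkeeping: keeping the several CRT decompositions consistent and — most importantly — checking that every inverse appearing ($[\ell_2]_{\ell_1 c}$, $[\alpha]_{N_1}$, $[\ell_2]_{N_1}$, $[\ell_1\gamma\alpha]_{\ell_2}$) is genuinely invertible and that $N_1\mid\ell_1 c$, which is exactly the content of the first step. The only other point that requires care is the pair of cosmetic changes of variable in the last step, without which the natural output of the character manipulation sits in the ``wrong'' normalization relative to the statement.
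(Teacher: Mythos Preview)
Your argument is correct and supplies exactly the CRT-and-orthogonality computation that the paper omits (the paper records only that ``a straightforward calculation then shows the following''). One small point worth tightening: when $(\ell_1,c/\gamma)>1$ the right-hand side is still well-defined, so to fully justify ``it suffices to treat these $c$'' you should observe that the same orthogonality step, read off the right-hand side, forces $a\equiv 0\pmod p$ for any prime $p\mid(\ell_1,c/\gamma)\mid N_1$ and hence yields an empty sum there as well.
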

From Lemma \ref{rewriteKlos}, the analogue of \eqref{eqn:Kbnd} then follows easily and we can argue as before for the terms $(m, n)\neq (0, 0)$.

We finally rewrite the contribution from $m=n=0$ in a form which yields its analytic continuation to $s=0$.
In order to state the result, we
 let $\ell_1 \gamma=A_1A_2$, with $A_1\mid N_1^{\infty}$ and $\left(A_2,N_1\right)=1$ and denote  the M\"obius function by $\mu$.

\begin{lemma}\label{lem:53}
The
analytic continuation to $\sigma>-1/2$ of the
$m=n=0$ term in $\sum_3(\alpha,\gamma)$ exists and equals
\begin{multline}\label{eqn:mn0cuspfinal}
-\frac{2\sqrt{\pi}}{1+s}\ell^{1+s}\frac{\Gamma\left(\frac{1}{2}+s\right)}{\Gamma(1+s)} s \mathfrak{z}_2^{-1-2s} \frac{A_1}{N_1\phi(\ell_2A_1)}\frac{\zeta(2s+1)}{\zeta(2s+2)}\int_{\R}\varphi_{s}\left(t,\frac{z}{\ell}\right)dt\\
\times \sum_{g\mid \frac{N}{\gamma}} \mu(g)\frac{\phi(g\gamma \ell)}{(g
\gamma\ell
)^{2+2s}} \prod_{p\mid g\gamma\ell} \frac{1}{1-p^{-2-2s}}.
\end{multline}
\end{lemma}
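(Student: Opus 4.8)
The plan is to isolate the $m=n=0$ contribution to ${\sum}_3(\alpha,\gamma)$ as written in \eqref{eqn:sum3cuspexp}, and to mimic exactly the computation that produced the last term of \eqref{eqn:rewritemainfinal} in the proof of Lemma \ref{lem:sum3}, the only new feature being that the $c$-sum is now weighted by $K_{\alpha,\gamma}(0,0;c)$ rather than $K(0,0;c)=\phi(c)$. First I would record that $K_{\alpha,\gamma}(0,0;c)$ counts residues $a\pmod{\ell c}$ and $d\pmod c$ with $ad\equiv 1\pmod c$ and $c\equiv -a\alpha\gamma\pmod N$; equivalently, via Lemma \ref{rewriteKlos} specialized to $m=n=0$, it reduces to a classical Ramanujan-type sum, so $K_{\alpha,\gamma}(0,0;c)$ is multiplicative in the relevant sense and one can sum $\sum_{c\geq 1}K_{\alpha,\gamma}(0,0;c)c^{-2-2s}$ by comparing Euler factors. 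The split $\ell=\ell_1\ell_2$ with $\ell_1\mid\gamma^\infty$, $(\ell_2,\gamma)=1$, and the further split $\ell_1\gamma=A_1A_2$ with $A_1\mid N_1^\infty$, $(A_2,N_1)=1$, are introduced precisely to separate the Euler factors at primes dividing $N_1$, those dividing $\ell_2$, and the rest; carrying out this bookkeeping should produce the prefactor $\frac{A_1}{N_1\phi(\ell_2 A_1)}$ together with the Dirichlet series $\sum_{g\mid N/\gamma}\mu(g)\frac{\phi(g\gamma\ell)}{(g\gamma\ell)^{2+2s}}\prod_{p\mid g\gamma\ell}(1-p^{-2-2s})^{-1}$, in direct analogy with \eqref{evaluateF}--\eqref{sumcN}.

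The second ingredient is the inner $w$-integral at $m=0$, namely $\int_{\R}(\mathfrak{z}+w)^{-2-s}(\overline{\mathfrak z}+w)^{-s}\,dw$, which was already evaluated in \eqref{eqn:Petint} using (52a) of \cite{Pe4} and equals $-\frac{\sqrt\pi}{1+s}\frac{\Gamma(\frac12+s)}{\Gamma(1+s)}s\,\mathfrak z_2^{-1-2s}$; this is where the gamma-factor ratio and the $\mathfrak z_2^{-1-2s}$ come from. The $t$-integral at $n=0$ contributes $\int_{\R}\varphi_s(t,z/\ell)\,dt$ (up to the harmless prefactor $\ell^{-1-s}$ already visible in \eqref{eqn:sum3cuspexp}, which combines with the $c$-sum reindexing to give the overall $\ell^{1+s}$). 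Assembling these three pieces — the $c$-sum, the $w$-integral, and the $t$-integral — yields \eqref{eqn:mn0cuspfinal} as a formal identity for $\sigma>0$.

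For the analytic-continuation claim one argues exactly as at the end of the proof of Lemma \ref{lem:sum3}: the factor $s\,\zeta(2s+1)/\zeta(2s+2)$ is holomorphic for $\sigma>-1/2$ because the simple pole of $\zeta(2s+1)$ at $s=0$ is killed by the factor $s$ and $\zeta(2s+2)$ is zero-free there; the finite Euler product over $p\mid g\gamma\ell$ and the gamma-ratio are holomorphic for $\sigma>-1/2$; and $\int_{\R}\varphi_s(t,z/\ell)\,dt$ is holomorphic in $s$ by the same majorization as in \eqref{eqn:intphibnd} (shift $t$ to normalize $z/\ell$ to $i$, reducing to $\int_{\R}(t^2+1)^{-1-\sigma}\,dt$). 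The main obstacle — really the only non-routine point — is the explicit Euler-product evaluation of $\sum_c K_{\alpha,\gamma}(0,0;c)c^{-2-2s}$ and the verification that the combinatorial factors collapse to exactly $\frac{A_1}{N_1\phi(\ell_2A_1)}$ times the stated M\"obius sum; this requires care in tracking which primes divide $\gamma$, $\ell_1$, $\ell_2$, $N_1$, and $A_1$, and in checking the compatibility condition $c\equiv -a\alpha\gamma\pmod N$ prime by prime, but it is a finite local computation with no conceptual difficulty once Lemma \ref{rewriteKlos} is in hand.
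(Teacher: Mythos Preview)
Your proposal is correct and follows essentially the same route as the paper: isolate the $m=n=0$ term of \eqref{eqn:sum3cuspexp}, evaluate the $w$-integral via \eqref{eqn:Petint}, reduce the $c$-sum to an expression involving $F(\cdot,2+2s)$ by Euler-factor bookkeeping as in \eqref{evaluateF}--\eqref{sumcN}, and then invoke the same analyticity argument as at the end of Lemma~\ref{lem:sum3}. The only minor divergence is that the paper computes $K_{\alpha,\gamma}(0,0;c)$ by a direct count---showing it vanishes unless $(c,N)=\gamma$, in which case it equals $\tfrac{A_1}{N_1}\phi(A_2 c/\gamma)$---rather than specializing Lemma~\ref{rewriteKlos}; after the substitution $c\mapsto c\gamma$ the paper then passes to the M\"obius sum via \eqref{eqn:torewriteF}, which is exactly the ``finite local computation'' you anticipate.
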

\begin{proof}
A direct calculation shows that $K_{\alpha ,\gamma}(0,0;c)$ vanishes unless $(c,N)=\gamma$, in which case it equals
\begin{equation*}
K_{\alpha,\gamma}(0,0;c)=
\frac{A_1}{N_1}
\phi\left(\frac{A_2 c}{\gamma}\right).
\end{equation*}
Using \eqref{eqn:Petint} and letting $c\mapsto c\gamma$, we then easily obtain that the $m=n=0$ term in $\sum_3 (\alpha, \gamma)$  equals
\begin{equation}\label{eqn:mn0cuspgamma}
-\frac{2\ell^{-1-s}\sqrt{\pi}}{1+s}\frac{\Gamma\left(\frac{1}{2}+s\right)}{\Gamma(1+s)} s \mathfrak{z}_2^{-1-2s}\frac{A_1}{N_1}\sum_{\substack{c\geq 1\\ \left(c,
\frac{N}{\gamma}\right)=1}}\frac{ \phi\left(A_2c\right)}{(c\gamma)^{2+2s}}\int_{\R} \varphi_{s}\left(t,\frac{z}{\ell}\right)dt.
\end{equation}

We next rewrite the sum over $c$ as
$$
\sum_{\substack{c\geq 1\\  \left(c,\frac{N}{\gamma}
\right)=1}} \frac{ \phi\left(A_2c\right)}{(c\gamma)^{2+2s}} = \frac{\ell^{2+2s}}{\phi(A_1\ell_2)}\sum_{\substack{c\geq 1\\  (c,
\frac{N}{\gamma}
)=1}} \frac{ \phi\left(A_2c\right)\phi\!\left(A_1\ell_2\right)}{(c\gamma\ell)^{2+2s}}.
$$
One easily checks that $(A_2c,A_1\ell_2)=1$ and then uses the multiplicativity of $\phi$ together with $A_1A_2\ell_2=\gamma\ell$ to rewrite the right-hand side as
\begin{equation}\label{eqn:torewriteF}
\frac{\ell^{2+2s}}{\phi(A_1\ell_2)}\sum_{\substack{c\geq 1\\  \left(c,\frac{N}{\gamma}\right)=1}} \frac{ \phi(c\gamma\ell)}{(c\gamma\ell)^{2+2s}}=\frac{\ell^{2+2s}}{\phi\left(A_1\ell_2\right)}\sum_{\substack{c\geq 1\\ \gamma\ell \mid c\\ \left(\frac{c}{\gamma\ell},
\frac{N}{\gamma}
\right)=1}} \frac{\phi(c)}{c^{2+2s}}
=\frac{\ell^{2+2s}}{\phi\left(A_1\ell_2\right)}
\sum_{\substack{c\geq 1\\ \gamma\ell \mid c}}
\sum_{g\mid \left(\frac{c}{\gamma\ell},\frac{N}{\gamma}\right)} \mu(g)\frac{ \phi(c)}{c^{2+2s}}.
\end{equation}
Here the last equality holds by the well-known identity
$$
\sum_{g\mid n}\mu(g)=\begin{cases} 1 &\text{if }n=1,\\ 0&\text{if }n>1.\end{cases}
$$

Reversing the order of summation
then yields that the right-hand side of \eqref{eqn:torewriteF} equals
$$
\frac{\ell^{2+2s}}{\phi\left(A_1\ell_2\right)}\sum_{g\mid \frac{N}{\gamma}} \mu(g) F(g\gamma\ell,2+2s),
$$
where $F(N,s)$ is defined in \eqref{eqn:Fdef}.  Using \eqref{evaluateF}, \eqref{eqn:mn0cuspgamma} hence becomes \eqref{eqn:mn0cuspfinal}, completing the proof.
\end{proof}

\subsection{Behavior towards the cusps}
We are now ready to determine the growth as $z$ approaches a cusp.  For this we compute the Fourier expansion of $z\mapsto y \Psi_{2,N} \left( \mathfrak{z}, z\right)$.
\begin{lemma}\label{lem:alpha/gammalim}
For $L\in\SL_2(\Z)$, consider the cusp $L(i\infty)=\alpha/\gamma$ with $\gamma\mid N$ and $(\alpha,N)=1$, and let $v_0>0$.  Then, for $x_L+iy_L=z_L:=Lz$
satisfying $2v_0<\mathfrak{z}_2<y-1/v_0$, we have
\begin{equation}\label{eqn:alpha/gammalim}
\begin{split}
y_L\Psi_{2,N}\left(\mathfrak{z},z_L\right)=&2\pi \delta_{\frac{\alpha}{\gamma},\infty}\left(\sum_{n\geq 0} e^{-2\pi i n\mathfrak{z}} e^{2\pi i n
z}+ \sum_{n\geq 1} e^{2\pi in\mathfrak{z}} e^{-2\pi i n \overline{z}}\right)\\
&+ 4\pi^2 \sum_{c\geq 1}c^{-1} \sum_{n\geq 1} n^{-\frac{1}{2}} e^{\frac{2\pi i n z}{\ell}
} \sum_{m\geq 1} m^{\frac{1}{2}}e^{2 \pi i m \mathfrak{z}} K_{\alpha,\gamma}(m,-n;c)I_1\left(\frac{4\pi\sqrt{mn}}{c\ell}\right)\\
&+4\pi^2 \sum_{c\geq 1}c^{-1} \sum_{n\geq 1 } n^{-\frac{1}{2}} e^{-\frac{2\pi i n \overline{z
}}{\ell}} \sum_{m\geq 1} m^{\frac{1}{2}}e^{2 \pi i m \mathfrak{z}} K_{\alpha,\gamma}(m,n;c)J_1\left(\frac{4\pi\sqrt{mn}}{c\ell}\right)\\
&+\frac{c_N}{\mathfrak{z}_2}-  \frac{8 \pi^3}{\ell}
\sum_{m \geq 1} m e^{2\pi i m \mathfrak{z}}\sum_{c\geq 1}\frac{K_{\alpha,\gamma}(m,0;c)}{c^2}.
\end{split}
\end{equation}
In particular,
$$
\lim_{z\to \frac{\alpha}{\gamma}} y\Psi_{2,N}(\mathfrak{z},z)=\frac{c_N}{\mathfrak{z}_2} + 2\pi\delta_{\frac{\alpha}{\gamma},\infty}
-\frac{8 \pi^3}{\ell}\sum_{m \geq 1} m e^{2\pi i m \mathfrak{z}}\sum_{c\geq 1}\frac{K_{\alpha,\gamma}(m,0;c)}{c^2}.
$$
\end{lemma}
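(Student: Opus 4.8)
The plan is to expand $\mathcal{P}_{N,s}(\mathfrak{z},Lz)$ as in \eqref{eqn:analcont}, i.e. as $\sum_1(\alpha,\gamma)+\sum_2(\alpha,\gamma)+\sum_3(\alpha,\gamma)$, to analytically continue each summand to $s=0$, to compute its Fourier expansion in $\re(z_L)$ for $\mathfrak{z},z$ with $2v_0<\mathfrak{z}_2<y-1/v_0$, and to add the three contributions. Recall that $\sum_1(\alpha,\gamma)$ occurs precisely when $\gamma=N$, in which case $\alpha/\gamma$ is equivalent to $\infty$, so $\ell=1$ and $\delta_{\alpha/\gamma,\infty}=1$, and it is absent otherwise; this matches the factor $\delta_{\alpha/\gamma,\infty}$ in \eqref{eqn:alpha/gammalim}. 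The convergence needed to continue each piece to $s=0$ and to extract Fourier coefficients termwise for $y$ large is already available: for $\sum_1(\alpha,\gamma)$ from Lemma \ref{lem:sum1}, for $\sum_2(\alpha,\gamma)$ from the analogue of Lemma \ref{lem:sum2} noted in the text, and for $\sum_3(\alpha,\gamma)$ from \eqref{eqn:sum3cuspexp}, Lemma \ref{rewriteKlos} (which yields the analogue of \eqref{eqn:Kbnd}), and Lemma \ref{lem:53}. Once \eqref{eqn:alpha/gammalim} is proved, the final display follows by letting the argument $z_L$ tend to $\alpha/\gamma$, equivalently $\im(z)\to\infty$: every term carrying a non-constant exponential in $z$ vanishes, leaving $c_N/\mathfrak{z}_2$, the $n=0$ term of $\sum_1(\alpha,\gamma)$ (which contributes $2\pi\delta_{\alpha/\gamma,\infty}$), and the $n=0$ term of $\sum_3(\alpha,\gamma)$.

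For $\sum_1(\alpha,\gamma)$ with $\gamma=N$, its value at $s=0$ is $-i\sum_{n\in\Z}\big((\mathfrak{z}+n-z)^{-1}-(\mathfrak{z}+n-\overline{z})^{-1}\big)$ by \eqref{sum10}, and the classical identity $\sum_{n\in\Z}(w+n)^{-1}=\pi\cot(\pi w)$ together with the geometric-series expansion of $\cot$ --- valid because $\im(\mathfrak{z}-z)=\mathfrak{z}_2-y<0$ and $\im(\mathfrak{z}-\overline{z})=\mathfrak{z}_2+y>0$, the strict inequality $\mathfrak{z}_2<y$ being exactly the hypothesis --- produces the first line of \eqref{eqn:alpha/gammalim}. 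For $\sum_3(\alpha,\gamma)$, I start from \eqref{eqn:sum3cuspexp}, set $s=0$, and evaluate the inner integrals by \eqref{eqn:gval}: the $w$-integral equals $g_m(\mathfrak{z},\mathfrak{z})$, which vanishes for $m\leq 0$ and equals $-4\pi^2 m e^{2\pi i m\mathfrak{z}}$ for $m\geq 1$, while the $t$-integral equals $\tfrac{y}{\ell}\,g_n(-z/\ell,-\overline{z}/\ell)$, equal to $\pi e^{-2\pi i nz/\ell}$ for $n\leq 0$ and to $\pi e^{-2\pi i n\overline{z}/\ell}$ for $n\geq 1$. The excluded $m=n=0$ term is supplied by Lemma \ref{lem:53}, whose arithmetic factors collapse --- using $\lim_{s\to 0}s\zeta(2s+1)=\tfrac{1}{2}$, $\Gamma(\tfrac{1}{2})=\sqrt{\pi}$, $\zeta(2)=\tfrac{\pi^2}{6}$, $\int_{\R}\varphi_0(t,z/\ell)\,dt=\pi$, and the Euler-product evaluation \eqref{evaluateF} --- to exactly $c_N/\mathfrak{z}_2$. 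This yields the last line of \eqref{eqn:alpha/gammalim} (from $n=0$) together with Kloosterman contributions indexed by $n<0$ (carrying $e^{-2\pi i nz/\ell}$) and by $n>0$ (carrying $e^{-2\pi i n\overline{z}/\ell}$).

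The bulk of the work is the Fourier expansion of $\sum_2(\alpha,\gamma)$ at $s=0$. Writing each $M\in L^{-1}\Gamma_0(N)$ with $c\geq 1$ as $T^{\ell n}M'$ with $M'$ ranging over a fixed set of $\Gamma_\infty^\ell$-coset representatives, I Poisson-sum over $n$ in the $s=0$ form of $\sum_2(\alpha,\gamma)$ coming from \eqref{eqn:sum20}, evaluate the resulting $n$-indexed integrals by the Residue Theorem as in \eqref{eqn:gval}, and assemble the arithmetic over the coset representatives and over $c$ into the Kloosterman sums $K_{\alpha,\gamma}(m,\pm n;c)$ of \eqref{eqn:Kagdef} via the congruence description of $L^{-1}\Gamma_0(N)$. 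Adding the $n\neq 0$ contributions of $\sum_3(\alpha,\gamma)$ found above, and recognising the completed $c$-weighted integrals through the standard integral representations of the $I$- and $J$-Bessel functions, produces the second and third lines of \eqref{eqn:alpha/gammalim}; summing the four contributions then gives \eqref{eqn:alpha/gammalim}.

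I expect the principal obstacles to be the following. First, justifying all the rearrangements and the termwise extraction of Fourier coefficients for $y$ in the prescribed range; here the absolute and locally uniform convergence of $\sum_2(\alpha,\gamma)$ and of the triple sum in \eqref{eqn:sum3cuspexp} (the latter via the analogue of \eqref{eqn:Kbnd} from Lemma \ref{rewriteKlos}) are the crucial inputs, and the inequalities $2v_0<\mathfrak{z}_2<y-1/v_0$ are precisely what make the resulting double series over $m$ and $n$, which carry exponentially growing $I$-Bessel factors, converge. Second, matching the $\sum_2(\alpha,\gamma)$-terms with the leftover $n\neq 0$ terms of $\sum_3(\alpha,\gamma)$ so that they combine into $I_1$ and $J_1$ with exactly the normalisations appearing in \eqref{eqn:alpha/gammalim}. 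Third, the Euler-product bookkeeping inside Lemma \ref{lem:53} needed to verify that the $m=n=0$ contribution is the cusp-independent constant $c_N/\mathfrak{z}_2$.
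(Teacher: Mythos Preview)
Your overall strategy matches the paper's proof: decompose into $\sum_1(\alpha,\gamma)+\sum_2(\alpha,\gamma)+\sum_3(\alpha,\gamma)$, evaluate each at $s=0$, compute Fourier expansions, and combine. Two points deserve comment.

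First, your treatment of $\sum_2(\alpha,\gamma)$ is under-specified. A single Poisson summation over the left $\Gamma_\infty^\ell$-cosets gives, for each coset representative $M'$, contributions carrying the factor $j(M',\mathfrak{z})^{-2}e^{2\pi i n M'\mathfrak{z}/\ell}$; this still depends on $M'\mathfrak{z}$ and does not by itself ``assemble into the Kloosterman sums $K_{\alpha,\gamma}(m,\pm n;c)$.'' The paper performs a \emph{second} Poisson summation, over the right $\Gamma_\infty$-action (equivalently over $d\pmod c$, introducing the $m$-index), which produces the double integral $\mathcal{I}_{m,n}(\mathfrak{z}_2)=\int_{\R+i\mathfrak{z}_2}w^{-2}e^{-2\pi i(mw+n/(c^2w))}\,dw$; this is then identified with $I_1$ or $J_1$ via the Laplace-transform pairs (29.3.80)--(29.3.81) of \cite{AS}. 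Only after this step does one see the explicit $e^{2\pi i m\mathfrak{z}}$ structure and the Kloosterman sums. Your plan needs this second Poisson step made explicit.

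Second, for the $m=n=0$ constant you propose to collapse the Euler product in \eqref{eqn:mn0cuspfinal} directly to $c_N/\mathfrak{z}_2$. This is correct but laborious. The paper sidesteps the computation: from Propositions \ref{prop:sum3s}--\ref{prop:sum1s} one already knows that $\mathfrak{z}\mapsto y\Psi_{2,N}(\mathfrak{z},z)-c_N/\mathfrak{z}_2$ is meromorphic for every $z\in\H$, and in the cusp expansion the only non-meromorphic piece in $\mathfrak{z}$ is the constant $c_N(\gamma)/\mathfrak{z}_2$ coming from Lemma \ref{lem:53}; hence $c_N(\gamma)=c_N$ without any arithmetic. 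This shortcut is worth adopting.
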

\begin{proof}
One determines \eqref{eqn:alpha/gammalim} by computing
the Fourier expansion of the analytic continuation to $s=0$ of each of the sums $\sum_1(\alpha,\gamma)$, $\sum_2(\alpha,\gamma)$, and $\sum_3(\alpha,\gamma)$ in the splitting analogous to \eqref{eqn:analcont} for the coset $L^{-1}\Gamma_0(N)$.
The behavior towards the cusp $\alpha/\gamma$ then follows by taking the limit $z\to i\infty$ termwise.  The sum $\sum_1(\alpha,\gamma)$ vanishes unless $L\in \Gamma_0(N)$.  For $\sum_3(\alpha,\gamma)$ we plug $s=0$ into the $(m,n)\neq (0,0)$ terms of \eqref{eqn:sum3cuspexp} and note that by Lemma \ref{lem:53} the contribution from $m=n=0$ is $c_N(\gamma)/\mathfrak{z}_2$, where $c_N(\gamma)$ is some constant.  However, observing that, by Propositions \ref{prop:sum3s}, \ref{prop:sum2s}, and \ref{prop:sum1s}, $\mathfrak{z} \mapsto y\Psi_{2,N}(\mathfrak{z},z)-c_N/\mathfrak{z}_2$ is meromorphic for every $z\in\H$, we conclude that $c_N(\gamma)=c_N$.

Since the proofs of the Fourier expansions for $\sum_2(\alpha,\gamma)$ and for the limit of $\sum_3(\alpha,\gamma)$ at different cusps are similar, we only consider the cusp $i\infty$.
In order to compute the expansions for $\sum_1$ and $\sum_2$, we note that the assumption on $y$ and $\mathfrak{z}_2$ implies that $y>\im(M\mathfrak{z})$ for all $M\in\Gamma_0(N)$.  This follows because, for $M\in\Gamma_{\infty}$, we have $\im(M\mathfrak{z})=\mathfrak{z}_2<y$ and for $M=\left(\begin{smallmatrix}a&b\\c&d\end{smallmatrix}\right)$ with $c\neq 0$, we have
$$
\im(M\mathfrak{z})=\frac{\mathfrak{z}_2}{|c\mathfrak{z}+d|^2}\leq \frac{1}{c^2\mathfrak{z}_2}\leq  \frac{1}{\mathfrak{z}_2}<\frac{1}{v_0}<y-\mathfrak{z}_2<y.
$$
 Hence, for $\sum_1$, we apply Poisson summation to obtain, using \eqref{eqn:gval},
\begin{align}
\label{eqn:sum1exp}{\sum}_1&=2y\sum_{n\in\Z} \frac{1}{(\mathfrak{z}-z+n)(\mathfrak{z}-\overline{z}+n)} = 2y\sum_{n\in\Z} g_{n}\left(\mathfrak{z}-z,\mathfrak{z}-\overline{z}\right)\\
\nonumber &= 2\pi \sum_{n\geq 0} e^{-2\pi i n\mathfrak{z}} e^{2\pi i nz}
+2\pi\sum_{n\geq 1} e^{2\pi in\mathfrak{z}} e^{-2\pi i n\overline{z}}.
\end{align}

We finally consider $\sum_2$, using the representation directly from the definition \eqref{eqn:analcont} with $s=0$ plugged in, namely
$$
{\sum}_2 =2 y\sum_{\substack{M=\left(\begin{smallmatrix}a&b\\c&d\end{smallmatrix}\right)\in \Gamma_0(N)\\  c\geq 1}} \frac{\frac{1}{(M\mathfrak{z}-z)(M\mathfrak{z}-\overline{z})}-\frac{1}{\left(\frac{a}{c}-z\right)\left(\frac{a}{c}-\overline{z}\right)}}{j(M,\mathfrak{z})^2}.
$$
To determine the Fourier expansion of the right-hand side we obtain, using Poisson summation,
\begin{multline*}
2y\sum_{\substack{c\geq 1\\N\mid c}}c^{-2}\sum_{\substack{a,d\pmod{c}\\ ad\equiv 1\pmod{c}}}\sum_{n\in\Z}\sum_{m\in\Z}\\
\times \Bigg(\int_{\R}\frac{e^{-2\pi i mw}}{\left(\mathfrak{z}+\frac{d}{c}+w\right)^2} \int_{\R} \frac{e^{-2\pi i nt}}{\left(-\frac{1}{c^2\left(\mathfrak{z}+\frac{d}{c}+w\right)}+\frac{a}{c}-z+t\right)\left(-\frac{1}{c^2\left(\mathfrak{z}+\frac{d}{c}+w\right)}+\frac{a}{c}-\overline{z}+t\right)}dt dw\\
\left.-g_{n}\left(\frac{a}{c}-z,\frac{a}{c}-\overline{z}\right)g_m\left(\mathfrak{z}+\frac{d}{c},\mathfrak{z}+\frac{d}{c}\right)\right).
\end{multline*}
Using \eqref{eqn:gval}, we evaluate
\begin{align*}
g_n\left(\frac{a}{c}-z,\frac{a}{c}-\overline{z}\right)&=\begin{cases}
\frac{\pi}{y}e^{2\pi in\left(\frac{a}{c}-z\right)} & \text{if }n\leq 0,\\
\frac{\pi}{y}
e^{2\pi in\left(\frac{a}{c}-\overline{z}\right)} & \text{if }
n> 0,
\end{cases}\qquad\qquad\qquad\text{and}\\
g_m\left(\mathfrak{z}+\frac{d}{c},\mathfrak{z}+\frac{d}{c}\right)&=\begin{cases}
0&\text{if }m\leq 0,\\
-4\pi^2me^{2\pi i m\left(\mathfrak{z}+\frac{d}{c}\right)}&\text{if }m>0.
\end{cases}
\end{align*}
\indent
To compute the remaining double integral, we shift $w\mapsto w-\mathfrak{z}-d/c$ and $t\mapsto t-a/c$, so that the double integral becomes
\begin{equation}\label{eqn:doubleint}
e^{\frac{2\pi i \left(na+m d\right)}{c}} e^{2\pi i m\mathfrak{z}} \int_{\R+i\mathfrak{z}_2}\frac{e^{-2\pi i mw}}{w^2} \int_{\R} \frac{e^{-2\pi i nt }}{\left(-\frac{1}{c^2w}-z+t\right)\left(-\frac{1}{c^2w}-\overline{z}+t\right)}dt dw.
\end{equation}
From the restrictions on $y$ and $\mathfrak{z}_2$, one concludes that $-\frac{1}{c^2w}-z\in -\H$  and hence \eqref{eqn:gval} implies that
\rm
 the integral over $t$ equals
$$
g_n\left(-\frac{1}{c^2w}-z,-\frac{1}{c^2w}-\overline{z}\right)=\begin{cases}
\frac{\pi }{y} e^{-2\pi i n\left(\frac{1}{c^2w}+z\right)}&\text{if }n\leq 0,\\
\frac{\pi }{y}
 e^{-2\pi i n\left(\frac{1}{c^2w}+\overline{z}\right)}&\text{if }n> 0.
\end{cases}
$$
Therefore \eqref{eqn:doubleint} equals
\begin{align*}
\frac{\pi}{y}e&^{\frac{2\pi i \left(na+m d\right)}{c}} e^{2\pi i m\mathfrak{z}} e^{-2\pi i n(x-i\sgn(n)y)}\mathcal{I}_{m,n}(\mathfrak{z}_2),\quad\text{where} \quad \mathcal{I}_{m,n}\left(\mathfrak{z}_2\right):=\int_{\R+i\mathfrak{z}_2}\frac{e^{-2\pi i \left(mw+\frac{n}{c^2w}\right)}}{w^2}dw.
\end{align*}

Since the integrand in $\mathcal{I}_{m,n}(\mathfrak{z}_2)$ is meromorphic, the path of integration may be shifted to $\im(w)=\alpha$ for any $\alpha>0$, implying that $\mathcal{I}_{m,n}(\mathfrak{z}_2)=\mathcal{I}_{m,n}(\alpha)$ is independent of $\mathfrak{z}_2$.  Taking the limit $\im(w)\to \infty$ 
yields that $\mathcal{I}_{m,n}(\mathfrak{z}_2)$ vanishes for $m\in-\N_0$. Moreover,
for $n=0$ and $m\in\N$, \eqref{eqn:gval} implies that
$$
\mathcal{I}_{m,n}(\mathfrak{z}_2)=e^{2\pi m\mathfrak{z}_2} g_m\left(i\mathfrak{z}_2,i\mathfrak{z}_2\right)=-4\pi^2m.
$$
\indent
We conclude that the $n=0$ term precisely cancels the product of $g_n$ and $g_m$ computed above.  Finally, for $n\neq 0$ and $m\in\N$, we make the change of variables $w\mapsto i|n|^{1/2} m^{-1/2} c^{-1}w$ and then shift to $\re(w)=\alpha>0$, to obtain
\begin{equation*}
\mathcal{I}_{m,n}\left(\mathfrak{z}_2\right)=-ic\sqrt{\frac{m}{|n|}}\int_{\alpha+i\R}\frac{e^{\frac{2\pi}{c}\sqrt{m|n|}\left(w-\sgn(n)\frac{1}{w}\right)}}{w^2} dw.
\end{equation*}

Using the fact that, for fixed $\mu, \kappa>0$, the functions $t\mapsto(t/\kappa)^{(\mu-1)/2} J_{\mu-1}(2\sqrt{\kappa t})$ and $s\mapsto s^{-\mu} e^{-\kappa/s}$ (resp. $t\mapsto (t/\kappa)^{(\mu-1)/2} I_{\mu-1}(2\sqrt{\kappa t})$ and $s\mapsto s^{-\mu} e^{\kappa/s}$)
 are inverse to each other with respect to the Laplace transform, by (29.3.80) and (29.3.81) of \cite{AS}, then yields
\[
\mathcal{I}_{m,n}\left(\mathfrak{z}_2\right)=2\pi c\sqrt{\frac{m}{|n|}} \times
\begin{cases}
\vspace{1mm} J_1\left(\frac{4\pi\sqrt{mn}}{c}\right)&\quad\text{ if } n>0,\\
I_1\left(\frac{4\pi\sqrt{m|n|}}{c}\right)&\quad\text{ if } n<0.
\end{cases}
\]
\indent
Hence we obtain for $\sum_2$
\begin{multline}\label{eqn:sum2exp}
\sum_2=
4\pi^2 \sum_{\substack{c\geq 1\\ N\mid c}}c^{-1} \sum_{n\geq 1} n^{-\frac{1}{2}} e^{2\pi i n z} \sum_{m\geq 1} m^{\frac{1}{2}}e^{2 \pi i m \mathfrak{z}} K(m,-n;c)\left(I_1\left(\frac{4\pi\sqrt{mn}}{c}\right)
+\frac{2\pi \sqrt{mn}}{c}\right)\\
+ 4\pi^2 \sum_{\substack{c\geq 1\\ N\mid c}}c^{-1} \sum_{n\geq 1 } n^{-\frac{1}{2}} e^{-2\pi i n \overline{z}} \sum_{m\geq 1} m^{\frac{1}{2}}e^{2 \pi i m \mathfrak{z}} K(m,n;c)\left(J_1\left(\frac{4\pi\sqrt{mn}}{c}\right)+\frac{2\pi \sqrt{mn}}{c}
\right).
\end{multline}
\indent
To finish the computation of the Fourier expansion of $y\psi_{2, N}(\mathfrak{z}, z)$, we note that
the expansion of $\sum_3$ is given in Proposition \ref{prop:sum3s}
and the terms $n\neq 0$ precisely cancel the terms appearing after the $I$-Bessel and $J$-Bessel functions.
Combining \eqref{eqn:sum2exp} with \eqref{eqn:analcontsum3} and \eqref{eqn:sum1exp} yields the claimed expansion.

We finally compute the limit of $y\Psi_{2,N}(\mathfrak{z},z)$ as $z\to\alpha/\gamma$, or equivalently, the behavior of $y_L\Psi_{2,N}(\mathfrak{z},z_L)$ as $z\to i\infty$.
For this we take the limit termwise, but to do so we first verify absolute uniform convergence for $y$ sufficiently large.  The Fourier expansion in Proposition \ref{prop:sum3s} converges uniformly in $y>y_0$ for any fixed $y_0>0$, so the contribution to the limit from $\sum_3$ equals
$$
\frac{c_N}{\mathfrak{z}_2} - 8\pi^3\sum_{m\geq 1} m\sum_{\substack{c\geq 1\\ N\mid c}\rm} \frac{K(m,0;c)}{c^2}e^{2\pi i m\mathfrak{z}},
$$
which is a constant with respect to $z$.  The sums $\sum_1$ and $\sum_2$ converge absolutely uniformly under the assumptions given in the lemma, so we may also take the limits $
z\to i\infty
$ termwise; the contribution coming from $\sum_1$ is $2\pi$ and the limit of \eqref{eqn:sum2exp} vanishes,
completing
the proof.
\end{proof}

\subsection{A basis of polar Maass forms}\label{sec:basis}

In this section, we use Proposition \ref{prop:Psigrowth} to construct a family of polar Maass forms with arbitrary principal parts.  For $m\in-\N$ and $\tau_0\in\H$, we let 
\begin{multline}\label{eqn:Ydef}
\mathcal{Y}_{2-2k,m,N}(\tau_0,z):=\frac{i(2i)^{2k-1}}{4v_0\omega_{\tau_0}(-m-1)!}\\
\times
\frac{\partial^{-m-1}}{\partial X_{\tau_0}^{-m-1}(\mathfrak{z})}\!\left[\left(\mathfrak{z}-\overline{\tau_0}\right)^{2k}\!\left(y^{2k-1}\Psi_{2k,N}(\mathfrak{z},z)+\frac{\pi}{3}c_N\delta_{k=1}\widehat{E}_2(\mathfrak{z})\right)\right]_{\mathfrak{z}=\tau_0}.
\end{multline}
\begin{remark}
For $k>1$, Petersson applied his differential operator $\frac{\partial^{-m-1}}{\partial X_{\tau_0}^{-m-1}(\mathfrak{z})}$ to the meromorphic part of $y^{2k-1}\Psi_{2k,N}(\mathfrak{z},z)$ (see (49a) of \cite{Pe2}).  He investigated this function and used the Residue Theorem to compute its principal part in (50) of \cite{Pe2}.
\end{remark}

The following lemma extends Lemma 4.4 of \cite{BKFourier} to include $k=1$ and level $N$.
\begin{lemma}\label{lem:basis}
For $\tau_0\in\H$, $n\in-\N$, and $
k\in\N
$, there exists $F\in \H_{2-2k}^{\operatorname{cusp}}(N)$ with principal part
$$
\left(z-\overline{\tau_0}\right)^{2k-2}X_{\tau_0}^{n}(z) + O\left(\left(z-\tau_0\right)^{n+1}\right)
$$
around $z=\tau_0$ and no other singularities modulo $\Gamma_0(N)$ if and only if $n\equiv k-1\pmod{\omega_{\tau_0}}$.
In particular, if $n\equiv k-1\pmod{\omega_{\tau_0}}$, then the principal part of $z\mapsto \mathcal{Y}_{2-2k,n,N}(\tau_0,z)\in \H_{2-2k}^{\operatorname{cusp}}(N)$ equals $(z-\overline{\tau_0})^{2k-2}X_{\tau_0}^{n}(z)$.
\end{lemma}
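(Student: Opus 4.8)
The plan is to take $F=\mathcal{Y}_{2-2k,n,N}(\tau_0,\cdot)$ from \eqref{eqn:Ydef} as the explicit form, and to read the congruence restriction off the known shape of elliptic expansions. The ``only if'' direction is immediate: if $F\in\H_{2-2k}^{\operatorname{cusp}}(N)$ has the stated principal part at $\tau_0$, then by Proposition \ref{prop:ellexp} its elliptic expansion around $\tau_0$ has the form \eqref{elliptic} with the $\beta$-sum over nonnegative indices absent, and part (2) forces every index occurring to satisfy $n\equiv k-1\pmod{\omega_{\tau_0}}$; since the coefficient of $(z-\overline{\tau_0})^{2k-2}X_{\tau_0}^{n}(z)$ in the principal part of $F$ is $1$, this gives $n\equiv k-1\pmod{\omega_{\tau_0}}$.

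For the ``if'' direction, assume $n\equiv k-1\pmod{\omega_{\tau_0}}$. When $k>1$ and $N=1$ this is Lemma 4.4 of \cite{BKFourier}; for general $N$ its proof carries over, using the absolutely and locally uniformly convergent series $y^{2k-1}\Psi_{2k,N}$ (convergence and transformation law in Sections 1 and 2 of \cite{PeBereich}) together with Petersson's computation in (50) of \cite{Pe2} of the principal part of $\frac{\partial^{-n-1}}{\partial X_{\tau_0}^{-n-1}(\mathfrak{z})}$ applied to the meromorphic part of $y^{2k-1}\Psi_{2k,N}(\mathfrak{z},z)$ (its non-meromorphic part being $O(1)$ near $z=\tau_0$, hence irrelevant to the principal part, and the normalization in \eqref{eqn:Ydef} being chosen so the outcome is $(z-\overline{\tau_0})^{2k-2}X_{\tau_0}^{n}(z)$). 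So the genuinely new case is $k=1$, where one must work with the Hecke-continued $y\Psi_{2,N}=\mathcal{P}_{N,0}$ and track the $\tfrac{\pi}{3}c_N\widehat{E}_2(\mathfrak{z})$ term. First I would verify $z\mapsto\mathcal{Y}_{0,n,N}(\tau_0,z)\in\H_{0}^{\operatorname{cusp}}(N)$: for $z$ not $\Gamma_0(N)$-equivalent to $\tau_0$ the sums $\sum_1,\sum_2,\sum_3$ at $s=0$ converge locally uniformly in $\mathfrak{z}$ near $\tau_0$ (Propositions \ref{prop:sum1s}, \ref{prop:sum2s}, \ref{prop:sum3s}), so the $\mathfrak{z}$-derivatives in \eqref{eqn:Ydef} exist and may be computed termwise. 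Since $\frac{\partial^{-n-1}}{\partial X_{\tau_0}^{-n-1}(\mathfrak{z})}[\,\cdot\,]_{\mathfrak{z}=\tau_0}$ acts only in $\mathfrak{z}$, it commutes with the weight-$0$ action of $\Gamma_0(N)$ in $z$, with $\Delta_{0,z}$, and with $\xi_{0,z}$; combined with Lemma \ref{lem:xiPsi} and the fact that $\widehat{E}_2(\mathfrak{z})$ is constant in $z$, this gives $\Gamma_0(N)$-invariance and $\Delta_{0,z}$-harmonicity, and identifies $\xi_{0,z}(\mathcal{Y}_{0,n,N}(\tau_0,z))$ with a $\mathfrak{z}$-derivative of the family $\Phi_N(z,\mathfrak{z})$, which is a cusp form in $z$ for every $\mathfrak{z}$ by Lemma \ref{lem:xiPsi} and stays in the finite-dimensional space $S_2(N)$ under differentiation in the real-analytic parameter $\mathfrak{z}$. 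Boundedness at the cusps follows by differentiating the Fourier expansion of Lemma \ref{lem:alpha/gammalim} in $\mathfrak{z}$.

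The heart of the argument is the principal part of $z\mapsto\mathcal{Y}_{0,n,N}(\tau_0,z)$ at $z=\tau_0$ (by $\Gamma_0(N)$-invariance, and since the function is real-analytic off $\Gamma_0(N)\tau_0$, this is the only place to check). I would rewrite $\frac{\partial^{-n-1}}{\partial X_{\tau_0}^{-n-1}(\mathfrak{z})}[\,\cdot\,]_{\mathfrak{z}=\tau_0}$ as $(-n-1)!$ times a residue at $X_{\tau_0}(\mathfrak{z})=0$ via Cauchy's formula and deform the contour outward across the poles of $\mathfrak{z}\mapsto y\Psi_{2,N}(\mathfrak{z},z)$ that collapse onto $\mathfrak{z}=\tau_0$ as $z\to\tau_0$, namely those at $\mathfrak{z}=E^{r}z$ with $E$ a generator of $\Gamma_{\tau_0}$ and $r$ running modulo $\omega_{\tau_0}$. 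The leftover boundary integral is $O(1)$ in $z$ near $\tau_0$ and the $\widehat{E}_2$-term is $z$-constant, so the principal part comes purely from the residues at these poles; feeding in the (simple) principal parts of $y\Psi_{2,N}$ from Proposition \ref{prop:Psigrowth} together with $X_{\tau_0}(E^{r}z)=e^{2\pi i r/\omega_{\tau_0}}X_{\tau_0}(z)$ and $j(E,\tau_0)=e^{-\pi i/\omega_{\tau_0}}$ (from (2a.15) of \cite{Pe1} and (26) of \cite{Pe2}), the sum over $r$ collapses, by the standard cancellation $\sum_{r\pmod{\omega_{\tau_0}}}e^{2\pi i rm/\omega_{\tau_0}}=0$ for $\omega_{\tau_0}\nmid m$, to exactly $X_{\tau_0}^{n}(z)$; this is the weight-$0$, level-$N$ analogue of Petersson's derivation of (50) in \cite{Pe2}. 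The main obstacle is this last step when $\omega_{\tau_0}>1$: then $y\Psi_{2,N}(\tau_0,z)$ vanishes identically, the $\omega_{\tau_0}$ simple poles of $y\Psi_{2,N}(\mathfrak{z},\cdot)$ genuinely coalesce at $z=\tau_0$ to build a pole of order $-n$, and one must check that the congruence $n\equiv k-1\pmod{\omega_{\tau_0}}$ makes the top-order coefficient survive while the spurious lower-order poles cancel; when $\tau_0$ is not an elliptic fixed point the sum over $r$ is a single term and this reduces to the elementary identity behind the case $n=-1$. Once the principal part is identified, $\mathcal{Y}_{0,n,N}(\tau_0,\cdot)$ is the required form $F$, completing the ``if'' direction.
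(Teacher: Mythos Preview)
Your proposal is correct and follows essentially the same approach as the paper: both deduce the necessary congruence from the elliptic-expansion constraints (Proposition \ref{prop:ellexp}), handle $k>1$ by invoking Lemma 4.4 of \cite{BKFourier} and (50) of \cite{Pe2}, and for $k=1$ use meromorphicity of $y\Psi_{2,N}+\tfrac{\pi}{3}c_N\widehat{E}_2$ in $\mathfrak{z}$ together with Proposition \ref{prop:Psigrowth} to reduce the principal-part computation to Petersson's residue argument behind (50) of \cite{Pe2}. The only difference is emphasis: the paper singles out the check that the $\mathfrak{z}$-derivative of $[\sum_3]_{s=0}$ contributes nothing to the principal part (via termwise differentiation and exponential decay), whereas you subsume this into ``the leftover boundary integral is $O(1)$''; your explicit Cauchy-integral/contour-deformation description is exactly what the paper means by ``follows the proof of (50) in \cite{Pe2}.''
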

\begin{proof}
The necessary condition follows as in the proof of Lemma 4.4 of \cite{BKFourier}.  It hence suffices to show that $z\mapsto\mathcal{Y}_{2-2k,n,N}(\mathfrak{z}, z)$ have prescribed principal parts and are indeed elements of $\H_{2-2k}^{\operatorname{cusp}}(N)$.

The principal parts for $k>1$ are known by (50) of \cite{Pe2}, but a little work is needed to translate the calculation into a statement useful for our purposes.  Petersson technically only computed the principal part coming from acting with his differential operator on a function $H_{2k,N}(\mathfrak{z},z)$.    However, in Proposition 3.1 of \cite{BKFourier} it was shown that $H_{2k,N}$ is the meromorphic part of $(2iy)^{2k-1}\Psi_{2k,N}(\mathfrak{z},z)$.  Furthermore, for $k>1$, we also have $y^{2k-1}\Psi_{2k,N}(\mathfrak{z},z)\in \H_{2-2k}^{\operatorname{cusp}}(N)$ by Proposition 3.1 
of \cite{BKFourier}.  Since
\rm
 acting by a differential operator in the independent variable $\mathfrak{z}$ preserves both modularity and harmonicity in $z$, one easily concludes that $\mathcal{Y}_{2-2k,n,N}\in \H_{2-2k}^{\operatorname{cusp}}(N)$.  Since the non-meromorphic part of $z\mapsto(2iy)^{2k-1}\Psi_{2k,N}(\mathfrak{z},z)$ is real analytic, its image under Petersson's differential operator is also real analytic, and hence does not contribute to the principal part.  To conclude the claim for $k>1$ we thus only need to plug in (50) of \cite{Pe2}, which we next rewrite in our notation for the reader's convenience.\\
\indent
Since Petersson's function $z\mapsto H_{2k,N}(\mathfrak{z}, z)$ is not modular, it was necessary for him to compute the principal part around $z=L\tau_0$ separately for each $L\in\Gamma_0(N)$.
Due to the modularity of $y\Psi_{2,N}(\mathfrak{z},z)$, in order to determine the principal parts at all such points, it suffices to compute the principal part at  $z=\tau_0$.  Translating 
(50) of \cite{Pe2}
 into the notation in this paper and specializing to the case we are looking at, we have $l=\omega_{\tau_0}$, $\omega=\omega_1=\tau_0$, $t=X_{\tau_0}(z)$, and $\eta_1=q_1=1$.  If the appropriate congruence condition is satisfied, then by (50) of \cite{Pe2} the principal part of $\mathcal{Y}_{2-2k,n,N}(\tau_0,z)$ around $z=\tau_0$ equals
$$
\frac{i}{4v_0\omega_{\tau_0}}\left(-2\omega_{\tau_0}(2iv_0)(z-\overline{\tau_0})^{2k-2}X_{\tau_0}^{n}(z)\right)= (z-\overline{\tau_0})^{2k-2}X_{\tau_0}^{n}(z).
$$

We next turn to the case $k=1$.  By Lemma \ref{lem:xiPsi}, we have $y\Psi_{2,N}(\mathfrak{z},z)\in \H_0^{\operatorname{cusp}}$. Furthermore, the principal parts of $y\Psi_{2,N}(\mathfrak{z},z)$ are the same as for $k>1$ by Proposition \ref{prop:Psigrowth}.  Now note that
\begin{equation}\label{eqn:YnY1}
\mathcal{Y}_{0,n,N}\left(\tau_0,z\right)=
\frac{1}{v_0\omega_{\tau_0}}
 \frac{1}{(-n-1)!}\frac{\partial^{-n-1}}{\partial X_{\tau_0}^{-n-1}(\mathfrak{z})}\left[ -\frac{\omega_{\mathfrak{z}}}{4\mathfrak{z}_2}\left(\mathfrak{z}-\overline{\tau_0}\right)^{2} \mathcal{Y}_{0,-1,N}(\mathfrak{z},z)\right]_{\mathfrak{z}=\tau_0}.
\end{equation}

Since $\mathcal{Y}_{0,-1,N}$ is meromorphic as a function of $\mathfrak{z}$, the computation of the principal parts of $\mathcal{Y}_{0,n,N}$ follows the proof of (50) in \cite{Pe2}, except that one must be careful to verify that, for each $n\in-\N$, $\frac{\partial^{-n-1}}{\partial X_{\tau_0}^{-n-1}(\mathfrak{z})}[[\sum_{3}]_{s=0}]_{\mathfrak{z}=\tau_0}$ does not contribute to the principal part, where $[\sum_{3}]_{s=0}$ denotes the analytic continuation to $s=0$ of ${\sum}_3$ from \eqref{eqn:analcont}.  Acting by Petersson's differential operator termwise and noting absolute locally uniform convergence due to exponential decay in $y$ and $\mathfrak{z}_2$, one easily determines that the resulting Fourier expansion converge for every $\mathfrak{z}, z\in\H$ and hence these terms do not contribute to the principal parts.  This completes the proof.

\end{proof}

Lemma \ref{lem:basis} then yields the following proposition.
\begin{proposition}\label{prop:basis}
For each choice of $\tau_1,\dots,\tau_r\in\H$ and $k\geq 1$, there exists $F\in \H_{2-2k}^{\operatorname{cusp}}(N)$ with principal parts in $\H$ given by
$$
\sum_{d=1}^r\left(z-\overline{\tau_{d}}\right)^{2k-2}\sum_{\substack{ n<0\\
n\equiv k-1\pmod{\omega_{\tau_{d}}}}} b_{\tau_{d}}(n) X_{\tau_{d}}^{n}(z)
$$
and principal part at each cusp $\varrho$ given by  $\sum_{n< 0}a_{\varrho}(n) e^{\frac{2\pi i nz}{\ell_{\varrho}}}.$  Furthermore, $F$ is unique up to addition by a constant (for $k=1$) and is explicitly given by
$$
\sum_{\varrho\in\mathcal{S}_N} \sum_{n<0} a_{\varrho}(n) \mathcal{P}_{2-2k,n,N}^{\varrho}(z)+ \sum_{d=1}^{r} \sum_{\substack{n<0\\ n\equiv k-1\pmod{\omega_{\tau_{d}}}}} b_{\tau_{d}}(n) \mathcal{Y}_{2-2k,n,N}(\tau_{d},z).
$$

\end{proposition}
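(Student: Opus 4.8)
The plan is to verify directly that the explicit linear combination displayed in the statement has all the asserted properties, and then to deduce uniqueness from a short vanishing argument applied to the difference of two candidates; essentially all of the real content has already been packaged into Lemma~\ref{lem:basis} and Theorem~\ref{thm:weakMaass}.

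For existence together with the explicit representation, I would let $F$ denote the displayed sum of Poincar\'e series and read off its principal parts term by term. By Theorem~\ref{thm:weakMaass}, each $\mathcal{P}_{2-2k,n,N}^{\varrho}$ lies in $H_{2-2k}^{\operatorname{cusp}}(N)\subseteq\H_{2-2k}^{\operatorname{cusp}}(N)$, has principal part $\delta_{\mu,\varrho}e^{2\pi i nz/\ell_{\varrho}}$ at the cusp $\mu$, and, being a weak Maass form, has no singularities in $\H$. By Lemma~\ref{lem:basis}, for $n\equiv k-1\pmod{\omega_{\tau_d}}$ the function $z\mapsto\mathcal{Y}_{2-2k,n,N}(\tau_d,z)$ lies in $\H_{2-2k}^{\operatorname{cusp}}(N)$, has principal part $(z-\overline{\tau_d})^{2k-2}X_{\tau_d}^{n}(z)$ around $z=\tau_d$ and no other singularities modulo $\Gamma_0(N)$, and in particular is bounded towards every cusp. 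Since $\H_{2-2k}^{\operatorname{cusp}}(N)$ is a complex vector space and principal parts at a given point add, and since the hypothesis that $\tau_1,\dots,\tau_r$ are $\Gamma_0(N)$-inequivalent guarantees that the singular orbits of the various $\mathcal{Y}$-terms are pairwise disjoint modulo $\Gamma_0(N)$, I conclude that $F\in\H_{2-2k}^{\operatorname{cusp}}(N)$ has precisely the asserted principal parts, both in $\H$ and at every cusp.

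For uniqueness, suppose $\widetilde F\in\H_{2-2k}^{\operatorname{cusp}}(N)$ has the same principal parts as $F$. Then $G:=F-\widetilde F$ has no poles in $\H$ (they cancel) and trivial principal part at every cusp, hence is bounded there, is annihilated by $\Delta_{2-2k}$, and satisfies $\xi_{2-2k}(G)\in S_{2k}(N)$; thus $G$ is a harmonic Maass form in $H_{2-2k}^{\operatorname{cusp}}(N)$ with trivial principal parts. At this point I would invoke the Bruinier--Funke pairing \cite{BF}, which expresses $(\xi_{2-2k}(G),g)$ for $g\in S_{2k}(N)$ as a finite sum over the principal parts of $G$; since these vanish, taking $g=\xi_{2-2k}(G)$ forces $\xi_{2-2k}(G)=0$, so that $G$ is a holomorphic modular form of weight $2-2k\le 0$ that is bounded at all cusps. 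Hence $G\equiv 0$ for $k>1$ and $G$ is constant for $k=1$, which is exactly the claimed uniqueness. I expect this last step --- passing from trivial principal parts to $\xi_{2-2k}(G)=0$, i.e.\ the applicability of the principal-part pairing --- to be the only genuine point; for $k=1$ one can alternatively bypass $\xi$ entirely, since a bounded $\Delta_0$-harmonic function on the finite-area surface $\Gamma_0(N)\backslash\H$ that extends continuously across the cusps is constant by the maximum principle.
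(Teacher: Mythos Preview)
Your argument is correct and follows the paper's own proof essentially verbatim: existence via Lemma~\ref{lem:basis} together with Theorem~\ref{thm:weakMaass}, and uniqueness via the Bruinier--Funke result (Proposition~3.5 of \cite{BF}) that harmonic Maass forms with trivial principal parts are holomorphic, which you have simply unpacked in more detail. The only slip is the asserted inclusion $H_{2-2k}^{\operatorname{cusp}}(N)\subseteq\H_{2-2k}^{\operatorname{cusp}}(N)$ --- these are complementary summands of $\mathcal{H}_{2-2k}^{\operatorname{cusp}}(N)$, and the displayed $F$ really lies in the larger space $\mathcal{H}_{2-2k}^{\operatorname{cusp}}(N)$ --- but this is a typographical point (already present in the proposition statement) and does not affect your reasoning.
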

\begin{proof}
The existence of $F$ follows directly by Lemma \ref{lem:basis}, and the uniqueness comes from the fact that the only harmonic Maass forms with trivial principal parts are holomorphic modular forms,
which follows by Proposition 3.5 of \cite{BF}.
\end{proof}

\subsection{Differentials}

In this section, we consider the properties of $\mathcal{Y}_{0,-1,N}$.  Using the connection between weight $2$ forms and differentials, we prove Theorem \ref{thm:differentials}.

\begin{proof}[Proof of Theorem \ref{thm:differentials}] (1) We have to show that $y\Psi_{2,N}(\mathfrak{z},z)$ has precisely a simple pole at $\mathfrak{z}=z$.  As in the proof of Proposition \ref{prop:Psigrowth} for non-elliptic fixed points, these correspond to the terms from $\sum_1$ and $\sum_2$ for which $M\mathfrak{z}=z$, since $\sum_3$ converges absolutely in $\H$.  One sees directly that the poles are at most simple and the only matrices contributing to the pole at $\mathfrak{z}=z$ are $M=\pm I$, yielding the principal part $-i/(\mathfrak{z}-z)$.  For elliptic fixed points, the argument is similar, except that we must make sure that the congruence conditions for the coefficients are satisfied.  The congruence condition is $-1\equiv -k\pmod{\omega_{z}}$, which in this case ($k=1$) is 
always satisfied. 

\noindent
(2)  By Lemma \ref{lem:xiPsi} and the fact that $\widehat{E}_{2}(\mathfrak{z})$ in annihilated by $\xi_{0,z}$, we
obtain the cusp form
$$
\xi_{0,z}\left(y\Psi_{2,N}(\mathfrak{z},z)\right)=\Phi_N(z,\mathfrak{z}).
$$
\noindent
(3)  We may differentiate \eqref{eqn:analcontsum3}, \eqref{eqn:sum20},
and \eqref{sum10} directly.   There is no contribution to the residue at $\mathfrak{z}=z$ from \eqref{eqn:analcontsum3} because it is real analytic.  Since
$$
z\mapsto \frac{1}{\left(M\mathfrak{z}-z\right)\left(\frac{a}{c}-z\right)}\qquad \text{ and }\qquad z\mapsto \frac{1}{\mathfrak{z}-n-z}
$$
are both meromorphic, they have Laurent expansions around $z=\mathfrak{z}$, and hence their derivatives have trivial residue, as asserted here.
\end{proof}

\section{Bruinier--Funke pairing and the proofs of Theorem \ref{Peterssonch} and Corollary \ref{cor:Fourier}}\label{sec:mainproof}

In this section, we use a slight variant of the Bruinier--Funke pairing \cite{BF} to finish the proof of Theorem \ref{Peterssonch}.  For $g\in S_{2k}(N)$ and  $F\in \mathcal{H}_{2-2k}^{\operatorname{cusp}}(N),
$
 define the \textit{pairing}
\[
\{g, F\}:=\left(g, \xi_{2-2k}(F)\right),
\]
where, for $g,h\in S_{2k}(N)$ and $d\mu:=\frac{dx dy}{y^2}$,
$$
(g,h):=\frac{1}{\mu_N}\int_{\Gamma_0(N)\backslash \H} g(z)\overline{h(z)} y^{2k} d\mu
$$
is the standard \begin{it}Petersson inner-product\end{it} with $\mu_N:=[\SL_2(\Z):\Gamma_0(N)]$.  The pairing $\{g,F\}$ was computed for $F\in H_{2-2k}^{\operatorname{cusp}}(N)$ by Bruinier and Funke in Proposition 3.5 of \cite{BF}.  The following proposition extends their evaluation of $\{g,F\}$ to the entire space $\mathcal{H}_{2-2k}^{\operatorname{cusp}}(N)$.
\begin{proposition}\label{expansionprod}
Assume that $g\in S_{2k}(N)$ and write its elliptic expansion around each $\mathfrak{z}\in\H$ as $g(z)=(z-\overline{\mathfrak{z}})^{-2k}\sum_{n\geq 0}a_{g,\mathfrak{z}}(n)X_{\mathfrak{z}}^n(z)$  and its expansion at each cusp $\varrho$ as $g_{\varrho}(z) =\! \sum_{n\geq 1}a_{g,\varrho}(n)e^{\frac{2\pi i nz}{\ell_{\varrho}}}$.  Suppose that $F\in\mathcal{H}_{2-2k}^{\operatorname{cusp}}(N)$ and
write its expansion around $\mathfrak{z}\in\H$ as
\begin{equation*}
F(z)=\left(z-\overline{\mathfrak{z}}\right)^{2k-2}\sum_{n\gg -\infty} b_{F,\mathfrak{z}} (n)X_{\mathfrak{z}}^n(z)+\left(z-\overline{\mathfrak{z}}\right)^{2k-2}\sum_{n\leq -1} c_{F,\mathfrak{z}}(n)\beta \left(r_{\mathfrak{z}}^2(z); -n,2k-1\right)X_{\mathfrak{z}}^n(z).
\end{equation*}
Moreover, write the Fourier expansion of $F$ at the cusp $\varrho$ as
$$
F_\varrho(z)= \sum_{n \gg -\infty}c_{F,\varrho}^+(n)e^{\frac{2\pi i nz}{\ell_{\varrho}}} + \sum_{n < 0}c_{F,\varrho}^-(n) \Gamma\left(2k-1,\frac{4\pi|n|y}{\ell_{\varrho}}\right)e^{\frac{2\pi i nz}{\ell_{\varrho}}}.
$$
Then we have
\begin{equation}\label{eqn:gFpair}
\{g, F\}=
\frac{\pi}{\mu_N}
\sum_{\mathfrak{z}\in\Gamma_0(N)\backslash\H} \frac{1}{\mathfrak{z}_2\omega_\mathfrak{z}}\sum_{n\geq 1
}b_{F,\mathfrak{z}} \left(-n\right) a_{g,\mathfrak{z}}\left(n-1\right) +
\frac{1}{\mu_N}
\sum_{\varrho\in\mathcal{S}_N} \sum_{
n\geq 1} c_{F,\varrho}^{+}(-n)a_{g,\varrho}(n).
\end{equation}
\end{proposition}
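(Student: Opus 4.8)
The plan is to compute $\{g,F\}=(g,\xi_{2-2k}(F))$ by Stokes' theorem, extending the argument of Bruinier and Funke (Proposition 3.5 of \cite{BF}) by tracking the new boundary contributions coming from the singularities of $F$ in $\H$. Writing $\xi_{2-2k}(F)=2iy^{2-2k}\,\overline{\partial F/\partial\overline z}$ and using the holomorphy of $g$, a short computation gives $g(z)\,\overline{\xi_{2-2k}(F)(z)}\,y^{2k}\,d\mu=-\,d\bigl(g(z)F(z)\,dz\bigr)$, so that
\begin{equation*}
\{g,F\}=-\frac1{\mu_N}\int_{\Gamma_0(N)\backslash\H}d\bigl(g(z)F(z)\,dz\bigr).
\end{equation*}
Since $\xi_{2-2k}(F)$ is a cusp form by hypothesis, the integrand $g\,\overline{\xi_{2-2k}(F)}\,y^{2k-2}$ is smooth on all of $\H$ and decays rapidly at every cusp, so the integral converges absolutely; but $gF\,dz$ itself has poles at the finitely many $\Gamma_0(N)$-inequivalent singularities $\mathfrak z$ of $F$. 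Hence I excise from a fundamental domain $\mathcal F$ both the horoball neighborhoods of the cusps cut at height $T$ and, in the coordinate $X_{\mathfrak z}$, small disks of radius $\varepsilon$ around each singular point, obtaining a domain $\mathcal F_{T,\varepsilon}$ on which $gF\,dz$ is smooth. Since $gF\,dz$ has weight $2$ it is $\Gamma_0(N)$-invariant, so the contributions of the identified sides of $\partial\mathcal F_{T,\varepsilon}$ (including the radial edges of the excised sectors around elliptic points) cancel, leaving only the horocycles at the cusps and the circular arcs around the singular points.

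For a cusp $\varrho$ I conjugate by $M_\varrho$, so that $gF\,dz$ becomes $g_\varrho(z)F_\varrho(z)\,dz$ on a strip of width $\ell_\varrho$, and integrate over the horocycle at height $T$. Inserting the Fourier expansions and integrating in $x$ isolates the constant term of $g_\varrho F_\varrho$: the cross terms $e^{2\pi i(m+n)z/\ell_\varrho}$ with $m+n\neq0$ integrate to zero, the incomplete-gamma terms vanish in the limit $T\to\infty$ because $\Gamma(2k-1,4\pi nT/\ell_\varrho)\to0$, and only $\sum_{n\geq1}c^+_{F,\varrho}(-n)a_{g,\varrho}(n)$ survives; summing over $\varrho$ and restoring signs, widths and the normalization yields the cusp sum in \eqref{eqn:gFpair}.

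For a singular point $\mathfrak z$ I pass to the local coordinate $w:=X_{\mathfrak z}(z)$; using the identities behind Proposition \ref{prop:ellexp}, namely $z-\overline{\mathfrak z}=2i\mathfrak z_2/(1-w)$ and $dz=2i\mathfrak z_2(1-w)^{-2}\,dw$, one gets $(z-\overline{\mathfrak z})^{-2}\,dz=(2i\mathfrak z_2)^{-1}\,dw$, and the arc integral becomes $(2i\mathfrak z_2)^{-1}$ times a contour integral over $|w|=\varepsilon$ of $\bigl(\sum_{m\geq0}a_{g,\mathfrak z}(m)w^m\bigr)\bigl(\sum_n b_{F,\mathfrak z}(n)w^n+\sum_{n\leq-1}c_{F,\mathfrak z}(n)\beta(\varepsilon^2;-n,2k-1)w^n\bigr)\,dw$, where I have used that $g$, being holomorphic at $\mathfrak z$, has only nonnegative powers of $w$ and that $r_{\mathfrak z}^2(z)=\varepsilon^2$ is constant on the circle. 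Since $\beta(\varepsilon^2;-n,2k-1)\to0$ as $\varepsilon\to0$ for $-n\geq1$, the incomplete-beta terms drop out, while the Laurent part contributes the residue $(2i\mathfrak z_2)^{-1}\sum_{n\geq0}a_{g,\mathfrak z}(n)b_{F,\mathfrak z}(-n-1)$. The stabilizer of $\mathfrak z$ has order $\omega_{\mathfrak z}$ and acts on $w$ by $w\mapsto e^{2\pi i/\omega_{\mathfrak z}}w$, so the relevant arc subtends angle $2\pi/\omega_{\mathfrak z}$ (the two radial edges of the excised sector being identified when $\mathfrak z$ is elliptic), producing the factor $1/\omega_{\mathfrak z}$; reindexing $n\mapsto n-1$ gives the sum over $\mathfrak z$ in \eqref{eqn:gFpair}. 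Assembling the two types of boundary terms with the overall sign completes the proof.

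I expect the principal difficulty to be the careful justification of the Stokes and limiting steps rather than any one computation: one must check that $gF\,dz$ is genuinely smooth and $\Gamma_0(N)$-invariant off the singular set, that the Fourier expansions at the cusps and the elliptic expansions around the singular points converge well enough on the relevant truncated, respectively punctured, neighborhoods to allow termwise integration and passage to the limit, and that the non-meromorphic parts really do contribute nothing --- the latter hinging on the asymptotics $\Gamma(2k-1,Y)\sim Y^{2k-2}e^{-Y}$ as $Y\to\infty$ and $\beta(\varepsilon^2;m,2k-1)\sim\varepsilon^{2m}/m$ as $\varepsilon\to0$. Keeping the orientations, cusp widths, stabilizer orders, and the factor $\mu_N$ consistent throughout is tedious but routine.
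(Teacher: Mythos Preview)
Your approach is essentially the paper's: Stokes' theorem reduces $\{g,F\}$ to boundary contributions from horocycles at the cusps (the Bruinier--Funke computation) and from small circles around the interior poles. Two cosmetic differences are that the paper first reduces by linearity to a single pole, and that it computes the residue directly in $z$ rather than via $w=X_{\mathfrak z}(z)$; your change of variables $(z-\overline{\mathfrak z})^{-2}\,dz=(2i\mathfrak z_2)^{-1}\,dw$ is in fact a bit cleaner.

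There is, however, one genuine subtlety in your arc argument at an elliptic fixed point that you gloss over. You assert that the arc of angle $2\pi/\omega_{\mathfrak z}$ contributes $1/\omega_{\mathfrak z}$ of the full residue. But for $j\neq -1$ the arc integral of $w^j\,dw$ over $\theta\in[0,2\pi/\omega_{\mathfrak z}]$ equals $\frac{\varepsilon^{j+1}}{j+1}\bigl(e^{2\pi i(j+1)/\omega_{\mathfrak z}}-1\bigr)$, which for the finitely many terms with $j=m+n\leq -2$ diverges as $\varepsilon\to 0$ unless $\omega_{\mathfrak z}\mid j+1$. This divisibility does hold---by Proposition~\ref{prop:ellexp}(2) the nonvanishing coefficients satisfy $m\equiv -k$ and $n\equiv k-1\pmod{\omega_{\mathfrak z}}$, hence $j\equiv -1\pmod{\omega_{\mathfrak z}}$; equivalently, the $\Gamma_{\mathfrak z}$-invariance of $gF\,dz$ forces exactly these powers---but you should say so explicitly. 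The paper sidesteps the whole issue by replacing $\mathcal F(N)$ with $\Gamma_{\mathfrak z}\mathcal F(N)$, so that $\mathfrak z$ lies in the interior, a full Euclidean ball $B_\varepsilon(\mathfrak z)$ can be excised, and the ordinary Residue Theorem applies over the complete circle $\partial B_\varepsilon(\mathfrak z)$; the factor $1/\omega_{\mathfrak z}$ then comes from dividing out the overcounting.
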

\begin{proof}

The proof closely follows the proof of Proposition 3.5 in \cite{BF}.   By Proposition \ref{prop:basis} and linearity, $\mathcal{H}_{2-2k}^{\operatorname{cusp}}(N)$ decomposes into the direct sum of $H_{2-2k}^{\operatorname{cusp}}(N)$ and $\H_{2-2k}^{\operatorname{cusp}}(N)$.
Each of these subspaces then further
splits into direct sums of subspaces consisting of forms
with poles at precisely one cusp $\varrho$ or precisely one point $\mathfrak{z}\in \Gamma_0(N)\backslash\H$.  Hence it suffices to assume that $F$ has a pole at exactly one $\mathfrak{z}\in\Gamma_0(N)\backslash\H$.

We may choose a fundamental domain $\mathcal{F}(N)$ of $\Gamma_0(N)\backslash\H$ for which $
\mathfrak{z}$ is in the interior of $\Gamma_{\mathfrak{z}}\mathcal{F}(N) = \left\{ Mz\middle| M\in \Gamma_{\mathfrak{z}}, z\in\mathcal{F}(N)\right\}$.   Since $\Gamma_{\mathfrak{z}}$ fixes $\mathfrak{z}$, there is precisely one copy of $\mathfrak{z}$ modulo $\Gamma_0(N)$ inside $\Gamma_{\mathfrak{z}}\mathcal{F}(N)$.  Denoting the ball around $\mathfrak{z}$ with radius $\varepsilon>0$ by $B_\varepsilon (\mathfrak{z})$ and noting that $\overline{\xi_{2-2k}(F)}=y^{-2k}L_{2-2k}(F)$, we may rewrite
\[
\{g, F\}=\lim_{\varepsilon\to 0}\frac{1}{\mu_N\omega_{\mathfrak{z}}}\int_{\Gamma_{\mathfrak{z}}\mathcal{F}(N)\setminus B_{\varepsilon}(\mathfrak{z})}g(z) L_{2-2k}\left(F(z)\right)d\mu.
\]
Using $d(g(z)\overline{F(z)}dz)=-g(z)L_{2-2k}(F(z))d\mu,$ and applying Stokes' Theorem, the integral becomes
\begin{equation}\label{eqn:Stokes}
\frac{1}{\mu_N\omega_{\mathfrak{z}}}\int_{\partial B_{\varepsilon}(\mathfrak{z})}g(z)F(z)dz,
\end{equation}
where the integral is taken counter-clockwise.  We next insert the expansions of $F$ and $g$  around $z=\mathfrak{z}$.  Since the non-meromorphic part of $F$ is real analytic (and hence in particular has no poles) in $\H$, we may directly plug in $\varepsilon=0$ to see that the contribution from non-meromorphic part to \eqref{eqn:Stokes} vanishes.  Therefore, the limit $\varepsilon\to 0$ of \eqref{eqn:Stokes} equals the limit $\varepsilon\to 0$ of
\[
\frac{1}{\mu_N\omega_{\mathfrak{z}}}\sum\limits_{\substack{n\gg-\infty\\ m\geq 0}}b_{F,\mathfrak{z}}(n)a_{g,\mathfrak{z}}\left(m\right)
\int_{\partial B_{\varepsilon}\left(\mathfrak{z}\right)}
\left(z-\overline{\mathfrak{z}}\right)^{-2-\left(n+m\right)}(z-\mathfrak{z})^{n+m}dz.
\]
Setting  $n+m=-\ell$, we obtain, by the Residue Theorem,
\begin{equation}\label{eqn:ResBF}
\int_{\partial B_\varepsilon(\mathfrak{z})}
 \left(z-\overline{\mathfrak{z}}\right)^{\ell-2} (z-\mathfrak{z})^{-\ell}dz=2\pi i\mathop{\mathrm{Res}}\limits_{z=\mathfrak{z}}\left(\left(z-\overline{\mathfrak{z}}\right)^{\ell-2}(z-\mathfrak{z})^{-\ell}\right).
\end{equation}
From this, one sees immediately that
there is no contribution from $\ell\leq 0
$.
If $\ell\geq 2$ we get, writing $z-\overline{\mathfrak{z}}=z-\mathfrak{z}+2i\mathfrak{z}_2$,
\[
\left(z-\overline{\mathfrak{z}}\right)^{\ell-2}=\sum_{j=0}^{\ell-2}\binom{\ell-2}{j}\left(2i\mathfrak{z}_2\right)^j (z-\mathfrak{z})^{\ell-2-j},
\]
thus the residue in \eqref{eqn:ResBF} vanishes.
 If $\ell=1$, then we obtain
\[
(z-\mathfrak{z})^{-1}\left(z-\overline{\mathfrak{z}}\right)^{-1}=\frac{1}{2i\mathfrak{z}_2
(z-\mathfrak{z})\left(1+\frac{z-\mathfrak{z}}{2i\mathfrak{z}_2}\right)}=\frac1{2i\mathfrak{z}_2}\sum_{j=0}^\infty \left(-2i\mathfrak{z}_2\right)^{-j}(z-\mathfrak{z})^{-1+j}.
\]
So only the term with $j=0$ contributes to the residue, giving the claim.
\end{proof}

We are finally ready to extend Satz 3 of \cite{Pe1} to include the case $k=1$.
\begin{proof}[Proof of Theorem \ref{Peterssonch}]
By Proposition \ref{prop:basis}, the linear combination \eqref{eqn:genpolar} is a  (unique, up to addition by a constant if $k=1$) weight $2-2k$ polar harmonic Maass form $F\in \H_{2-2k}^{\operatorname{cusp}}(N)$ with the principal parts as in the theorem.  Since $F\in\mathcal{M}_{2-2k}(N)$ if and only if $\xi_{2-2k}(F)=0$ and $\xi_{2-2k}(F)$ is a cusp form, we conclude that $F\in\mathcal{M}_{2-2k}(N)$ if and only if $\xi_{2-2k}(F)$ is orthogonal to every cusp form $g\in S_{2k}(N)$.  However, by Proposition \ref{expansionprod}, we see that this occurs if and only if \eqref{eqn:gFpair} holds for every $g\in S_{2k}(N)$.  This is the statement of the theorem.
\end{proof}

We are now ready to give an explicit version of Corollary \ref{cor:Fourier}, which is an easy consequence of Theorem \ref{Peterssonch} together with Lemma \ref{lem:alpha/gammalim}.
To state the theorem, we require the sum-of-divisors function $\sigma(m):=\sum_{d\mid m} d$.
\begin{theorem}\label{thm:Fourier}
Suppose that $\tau_1,\dots, \tau_r\in\H$ are given such that
$$
F(z):=\sum_{\varrho\in\mathcal{S}_N} \sum_{n<0} a_{\varrho}(n) \mathcal{P}_{0,n,N}^{\varrho}(z) + \sum_{d=1}^{r} \sum_{\substack{n\equiv 0\pmod{\omega_{\tau_{d}}}\\ n<0}} b_{\tau_{d}}(n) \mathcal{Y}_{0,n,N}\left(\tau_{d},z\right)
$$
satisfies \eqref{eqn:Peterssonch} (with $k=1$).  For $y$ sufficiently large (depending on $v_1,\dots,v_d$), we have the expansion
\begin{multline*}
F(z)=\sum_{\varrho=\frac{\alpha}{\gamma}\in\mathcal{S}_N} \sum_{n<0} a_{\varrho}(n)\left(\vphantom{\substack{b\vspace{0.5in}}}\right. \delta_{\varrho,\infty} e^{2\pi i n z}+ \frac{4\pi^2}{\ell_{\varrho}} |n|\sum_{c\geq 1} \frac{K_{\alpha,\gamma}(n,0;c)}{c^3}\\
\hfill + 2\pi \left(\frac{|n|}{\ell_{\varrho}}\right)^{\frac{1}{2}} \sum_{j\geq 1}j^{-\frac{1}{2}}  \sum_{c\geq 1} \frac{K_{\alpha,\gamma}(n,j;c)}{c}I_1\left(\frac{4\pi}{c} \sqrt{\frac{|n|j}{\ell_{\varrho}}}\right) e^{2\pi i jz}\left.\vphantom{\substack{b\vspace{0.5in}}}\right)\\
-  \sum_{d=1}^{r} \sum_{\substack{n\equiv 0\pmod{\omega_{\tau_{d}}}\\ n<0}} b_{\tau_{d}}(n)
\frac{\pi}{v_d\omega_{\tau_d}}\frac{1}{(-n-1)!}\frac{\partial^{-n-1}}{\partial X_{z}^{-n-1}(\mathfrak{z})} \left[\vphantom{\substack{b\vspace{0.6in}}}\right.
\left(\mathfrak{z}-\overline{\tau_d}\right)^{2} \sum_{j\geq 0} e^{2\pi i jz}e^{-2\pi i j\mathfrak{z}} \hfill \\
+ 2\pi\left(\mathfrak{z}-\overline{\tau_d}\right)^{2}
  \sum_{j\geq 1} j^{-\frac{1}{2}} e^{2\pi i j z} \sum_{m\geq 1} m^{\frac{1}{2}}e^{2 \pi i m \mathfrak{z}} \sum_{\substack{c\geq 1\\
N\mid c}}c^{-1} K(m,-j;c)I_1\left(\frac{4\pi\sqrt{m j}}{c}\right)\\
- 4 \pi^2\left(\mathfrak{z}-\overline{\tau_d}\right)^{2}\sum_{m\geq 1} m e^{2\pi i m \mathfrak{z}}\sum_{\substack{c\geq 1\\ N\mid c}}\frac{K(m,0;c)}{c^2} -\frac{4\left(\mathfrak{z}-\overline{\tau_d}\right)^{2}
}{N \prod_{p|N} \left(1+p^{-1}\right)
} \left(1-24\sum_{m\geq 1} \sigma(m)e^{2\pi i m\mathfrak{z}}\right)\left.\vphantom{\substack{b\vspace{0.6in}}}\right]_{\mathfrak{z}=\tau_d}.
\end{multline*}
\end{theorem}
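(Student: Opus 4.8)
The plan is to assemble the expansion from three inputs already established: the representation of $F$ as an explicit linear combination of basis elements (Theorem~\ref{Peterssonch}, equivalently Proposition~\ref{prop:basis} together with the meromorphicity criterion), the Fourier expansion of the cuspidal Maass--Poincar\'e series (Theorem~\ref{thm:weakMaass}), and the Fourier expansion of $z\mapsto y\Psi_{2,N}(\mathfrak{z},z)$ at $i\infty$ (Lemma~\ref{lem:alpha/gammalim}). Since the hypothesis is precisely the principal part condition \eqref{eqn:Peterssonch} with $k=1$, Theorem~\ref{Peterssonch} shows that the function $F$ of the statement (which is the $k=1$ instance of \eqref{eqn:genpolar}) is a meromorphic modular function on $\Gamma_0(N)$, and hence, for $y$ above all of its finitely many poles, has an ordinary Fourier expansion with vanishing non-meromorphic part. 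In the decomposition $F=\sum_{\varrho,n}a_\varrho(n)\mathcal{P}_{0,n,N}^{\varrho}+\sum_{d,n}b_{\tau_d}(n)\mathcal{Y}_{0,n,N}(\tau_d,\cdot)$ this forces the non-meromorphic parts of the two types of summands to cancel, so the Fourier expansion of $F$ equals the sum of the \emph{meromorphic parts} of the Fourier expansions of the basis elements, weighted by $a_\varrho(n)$ and $b_{\tau_d}(n)$.

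I would then compute these two meromorphic parts separately. For $\mathcal{P}_{0,n,N}^{\varrho}$ nothing new is needed: the meromorphic part of its expansion at $i\infty$ is the $\kappa=0$ specialization of Theorem~\ref{thm:weakMaass}, and substituting $\kappa=0$ there reproduces verbatim the first block of the claimed formula. For $\mathcal{Y}_{0,n,N}(\tau_d,z)$ I would start from the definition \eqref{eqn:Ydef} with $k=1$, so that $\mathcal{Y}_{0,n,N}(\tau_d,z)$ equals the explicit scalar $\tfrac{i(2i)}{4v_d\omega_{\tau_d}(-n-1)!}$ times $\frac{\partial^{-n-1}}{\partial X_{\tau_d}^{-n-1}(\mathfrak{z})}\bigl[(\mathfrak{z}-\overline{\tau_d})^{2}\bigl(y\Psi_{2,N}(\mathfrak{z},z)+\tfrac{\pi}{3}c_N\widehat{E}_2(\mathfrak{z})\bigr)\bigr]_{\mathfrak{z}=\tau_d}$, and then insert the Fourier expansion of $z\mapsto y\Psi_{2,N}(\mathfrak{z},z)$ at the cusp $i\infty$, i.e.\ Lemma~\ref{lem:alpha/gammalim} with $L=I$, $\varrho=\alpha/\gamma=\infty$, cusp width $\ell=1$, together with $\widehat{E}_2(\mathfrak{z})=E_2(\mathfrak{z})-\tfrac{3}{\pi\mathfrak{z}_2}$ and $E_2(\mathfrak{z})=1-24\sum_{m\geq1}\sigma(m)e^{2\pi i m\mathfrak{z}}$. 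Two simplifications are the crux of the identification: the term $c_N/\mathfrak{z}_2$ produced by $\sum_3$ in Lemma~\ref{lem:alpha/gammalim} is cancelled exactly by the $-\tfrac{3}{\pi\mathfrak{z}_2}$ contribution of $\tfrac{\pi}{3}c_N\widehat{E}_2$, leaving only the \emph{holomorphic} Eisenstein piece $\tfrac{\pi}{3}c_NE_2(\mathfrak{z})$, which after collecting scalars via $c_N=-6/\mu_N$ with $\mu_N=N\prod_{p\mid N}(1+p^{-1})$ becomes the final line of the displayed formula; and the $e^{-2\pi i n\overline{z}}$ and $J_1$-terms in Lemma~\ref{lem:alpha/gammalim}, being real-analytic in $z$, belong to the non-meromorphic part in $z$ and therefore disappear in the cancellation of the previous paragraph, so only the $\sum_{j\geq0}e^{2\pi ijz}e^{-2\pi ij\mathfrak{z}}$-terms, the $I_1$-terms, and the $K(m,0;c)/c^2$-term survive. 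Because all surviving series decay exponentially in $\mathfrak{z}_2$ (and, in the $z$-variable, for $y$ large), the operator $\frac{\partial^{-n-1}}{\partial X_{\tau_d}^{-n-1}(\mathfrak{z})}$ may be applied term by term and the output still converges for $y$ sufficiently large, with a lower bound on $y$ depending on $v_1,\dots,v_r$, exactly as in the statement. Pulling a factor $2\pi$ out of the Fourier series turns $\tfrac{i(2i)}{4v_d\omega_{\tau_d}(-n-1)!}$ into the $-b_{\tau_d}(n)\tfrac{\pi}{v_d\omega_{\tau_d}}\tfrac{1}{(-n-1)!}$ prefactor of the $\mathcal{Y}$-block, and summing the two blocks over $\varrho$, $d$, and $n$ gives the assertion, the convergence domain for $y$ being the intersection of the region where the $\mathcal{P}$-expansion converges (all large $y$) with the one where Lemma~\ref{lem:alpha/gammalim} applies.

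I expect the main obstacle to be bookkeeping rather than conceptual: one must (i) propagate the numerous multiplicative constants --- powers of $2\pi i$, the factors $v_d$, $\omega_{\tau_d}$, $\ell_\varrho$, $c_N$, and the normalization of $\widehat{E}_2$ --- correctly through the iterated differential operator and its evaluation at $\mathfrak{z}=\tau_d$; (ii) justify the term-by-term action of $\frac{\partial^{-n-1}}{\partial X_{\tau_d}^{-n-1}(\mathfrak{z})}$ on the Fourier series and the joint convergence of all pieces for $y$ large; and (iii) be precise about which monomials make up the non-meromorphic part of each $\mathcal{P}_{0,n,N}^{\varrho}$ and $\mathcal{Y}_{0,n,N}(\tau_d,\cdot)$, so that the cancellation forced by \eqref{eqn:Peterssonch} --- the only place where the meromorphicity of $F$ enters --- is legitimate.
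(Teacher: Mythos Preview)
Your proposal is correct and follows essentially the same approach as the paper: invoke Theorem~\ref{Peterssonch} to deduce that $F$ is meromorphic so that only the meromorphic parts of the constituent Fourier expansions contribute, then substitute the $\kappa=0$ case of Theorem~\ref{thm:weakMaass} for the $\mathcal{P}_{0,n,N}^{\varrho}$ terms and use the definition \eqref{eqn:Ydef} together with Lemma~\ref{lem:alpha/gammalim} (at $i\infty$) plus the $\widehat{E}_2$ correction for the $\mathcal{Y}_{0,n,N}$ terms. Your write-up in fact makes explicit several points the paper leaves implicit, such as the cancellation $c_N/\mathfrak{z}_2-\tfrac{\pi}{3}c_N\cdot\tfrac{3}{\pi\mathfrak{z}_2}=0$ and the reason the $J_1$- and $e^{-2\pi in\overline{z}}$-terms are discarded.
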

\begin{proof}

The claim follows by computing the Fourier expansions of $\mathcal{P}_{0,n,N}^{\varrho}$ and $\mathcal{Y}_{0,n,N}$ for each $n\in -\mathbb{N}$.  However, since $F$ is meromorphic by Theorem \ref{Peterssonch},
we only need to compute the meromorphic parts of each Fourier expansion.  We
begin by plugging in the meromorphic parts of the the Fourier expansions of $\mathcal{P}_{0,n,N}^{\varrho}$ given in  Theorem \ref{thm:weakMaass}.

In order to compute the Fourier expansions of the $\mathcal{Y}_{0,n,N}$, we assume that $y$ is sufficiently large so that in particular
there exists $v_0>0$ satisfying $2v_0<v_{d}<y-1/v_0$ for every $d\in\{1,\dots,r\}$.
By \eqref{eqn:YnY1},  to determine the expansions of $\mathcal{Y}_{0,n,N}^{+}$, we only need to apply the differential operators in the definition \eqref{eqn:Ydef} to the expansion
of $\mathcal{Y}_{0,-1,N}^+$.  Furthermore, the expansion of $\mathcal{Y}_{0,-1,N}^+$ may be directly obtained by taking the meromorphic part of the expansion given in Lemma \ref{lem:alpha/gammalim} plus
$$
\frac{\pi}{3} c_N \widehat{E}_2(\mathfrak{z})=-\frac{c_N}{\mathfrak{z}_2}+ \frac{4\pi}{N}\prod_{p\mid N}\left(1+p^{-1}\right)^{-1}\left(1-24\sum_{
m\geq 1
}\sigma(m)e^{2\pi i m\mathfrak{z}}\right).
$$
This yields the statement of the theorem.
\end{proof}

\end{document}